\newcommand{\tpp}{{\tt p}}
\newcommand{\C}{\mathbb{C}}
\newcommand{\N}{\mathbb{N}}
\newcommand{\Z}{\mathbb{Z}}
\newcommand{\fol}{\mathcal{F}}
\newcommand{\tilf}{\widetilde{\mathcal{F}}}
\newcommand{\partx}{{\partial /\partial x}}
\newcommand{\party}{{\partial /\partial y}}
\newcommand{\partz}{{\partial /\partial z}}
\newcommand{\calp}{{\mathcal{P}}}
\newcommand{\ddx}{{\partial /\partial x}}
\newcommand{\ddy}{{\partial /\partial y}}
\def\picill#1by#2(#3)#4
\vfill\special{illustration #3 scaled #4}}}
\newtheorem{teo}{Theorem}[section]
\newtheorem{prop}[teo]{Proposition}
\newtheorem{lema}[teo]{Lemma}
\newtheorem{obs}[teo]{Remark}
\newtheorem{coro}[teo]{Corollary}
\begin{document}

\title{Complete commuting vector fields and their singular points in dimension~$2$}

\author{Ana Cristina Ferreira, \, \, \, Julio C. Rebelo \, \, \, \& \, \, \, Helena Reis}
\address{}
\thanks{}

\subjclass[2010]{Primary 32S65; Secondary 37F75, 32S05}
\keywords{Semicomplete vector fields, singularities, complex surfaces, $\mathbb{C}^2$-actions}

\date{\today}

\maketitle

{\centerline{\small{\it Dedicated to Felipe Cano on the occasion of his $60^{\rm th}$-birthday}}}

\maketitle

\begin{abstract}
We classify degenerate singular points of $\C^2$-actions on complex surfaces.
\end{abstract}

\section{Introduction}

The notion of {\it germ of semicomplete vector field}\, was introduced in \cite{JR1} essentially as
an obstruction to the realization problem for the given germ by an actual {\it complete}\, holomorphic vector field
on some suitable manifold. In other words, if a germ of vector field {\it is not}\, semicomplete, then it
{\it cannot}\, be realized as a singular point of a complete vector field on any complex manifold. In dimension~$2$,
(germs of) semicomplete holomorphic vector fields were classified in \cite{JR1}, \cite{JR}, and \cite{JR2}.
Apart from singular points where the vector field has at least one eigenvalue different from zero, this classification
is summarized by Table~\ref{TheTable} in Section~\ref{morelikebasics}.

%\vspace{1.0cm}
%
%\centerline{\bf INSERT TABLE}
%
%\vspace{1.0cm}

Although the mentioned classification provides very accurate normal forms, a few interesting questions
can still be formulated. We may ask for example if the ``invertible'' multiplicative function appearing in several
entries of the mentioned table can be made constant. The answer to this question is in general negative, as already
follows from \cite{JR}, but additional information is provided in Section~\ref{globalexamples} and  especially by Proposition~\ref{non_invertible_function}. Similarly, still
considering Table~\ref{TheTable}, we note the presence of vector fields with non-isolated singularities
whose underlying foliation is not linearizable (item~10 in Table~\ref{TheTable}).
A first motivation to this work was then to consider pairs of commuting (semicomplete) vector fields and
check the extent to which
the existence of this additional symmetry further rigidifies the corresponding germs. This motivation is compounded
by the observation that most of the models in Table~\ref{TheTable} possess non-constant first integrals since
the existence of non-trivial centralizers is a typical property of integrable systems.

Whereas some additional motivations arising from singular spaces and/or commuting vector fields in dimension~$3$
will also be mentioned below, let us first explain the main results obtained in this paper. Our main purpose is
to classify all pairs $X$, $Y$ forming a {\it rank~$2$}\, system of commuting semicomplete vector fields on a neighborhood
of $(0,0) \in \C^2$, where by {\it rank~$2$ system} it is meant that $X$ and $Y$ {\it are not}\, linearly dependent
everywhere. In the sequel, the search for
this classification will be referred to as the {\it centralizer problem}. Note that the above mentioned invertible multiplicative
function appearing in front of the models in Table~\ref{TheTable} significantly adds to the difficulty of
the centralizer problem as it can be inferred from a simple computation of the bracket of two vector fields.
The purpose of this paper
is to provide an accurate solution to the centralizer problem whose content is summarized by Main Theorem below.

Before stating our classification result, it is however convenient
to introduce some terminology. One of the most interesting cases appearing in Main
Theorem, as well as in Table~\ref{TheTable}, involves vector fields whose underlying foliations
are determined by a holomorphic $1$-form $\omega$ admitting the normal form
\begin{equation}
\omega = mx (1 + {\rm h.o.t.}) \, dy \; + \; ny (1 + {\rm h.o.t.}) \, dx \label{general_martinetramis_form}
\end{equation}
where ${\rm h.o.t.}$ stands for higher order terms and with strictly positive integers $m,n$.
In the sequel we will also use the phrase {\it Martinet-Ramis foliations}\,
to refer to the collection of foliations described in Equation~(\ref{general_martinetramis_form}) since they are thoroughly studied
in~\cite{MR-ENS}. Table~\ref{TheTable} tells us in particular that there are non-linearizable
Martinet-Ramis foliations associated with semicomplete vector fields having curves of singular points. Furthermore
these foliations admit only constant first integrals.

The main result of this paper is therefore the following one:

\vspace{0.1cm}

\noindent {\bf Main Theorem.} {\sl Suppose that $X, \, Y$ form a rank~$2$ system of commuting semicomplete holomorphic
vector fields on $(\C^2, 0)$. Assume also that the eigenvalues of both $X$ and $Y$ at the origin are zero.
Then there are coordinates $(x,y)$ where the pair $X, \, Y$ admits one of the following normal forms
(up to linear combination):
:
\begin{itemize}
   \item[($\imath$)] $X = y^n \ddx$ and $Y = y (a(x,y) \ddx + b(y) \ddy)$ where $b(y) = \alpha y + {\rm h.o.t.}$ with
    $\alpha \in \C^{\ast}$ and $a(x,y) =1 + \alpha n x +xr(y) +s(y)$ where $r, s$ are holomorphic functions
    satisfying $r(0) =s(0) =0$.

   \item[($\imath \imath$)] $X = x^2 \ddx$ and $Y = y^2 \ddy$.

   \item[($\imath \imath \imath$)] $X = y^n \partial /\partial x$ and $Y = y(nx \partial /\partial x + y \partial /\partial y)$
   where $n \in \N^{\ast}$, $n \geq 2$. If $n=1$, then we have
   $$X =  c_1 y \ddx + c_2 x(x \partial /\partial x + y \partial /\partial y)  \; \; \;
   {\rm and} \; \; \; Y = y(x \partial /\partial x + y \partial /\partial y)
   $$
   with $c_1, c_2 \in \C$.

   \item[($\imath v$)]
   $$
    X = g_1 (xy^n) y^nx^2 \left[ \frac{\partial}{\partial x} + y^{n+1} g_2 (xy^n)  \left( nx \frac{\partial}{\partial x}
    - y \frac{\partial}{\partial y} \right) \right]
   $$
    and $Y = y(nx \ddx +y\ddy)$, where $n \in \N^{\ast}$ and $g_1$, $g_2$ are holomorphic functions of a single variable
    with $g_1 (0) =1$ and $g_1'(0) = g_2 (0) =0$;

   \item[($v$)] $X = y^n x^2 \partial /\partial x$ and $Y = x (ny-(n+1)x) \ddx - y^2\ddy$
    where $n\in \mathbb{N}$.

   \item[($v \imath $)] $X = (y-2x^2) \ddx - 2xy \ddy$ and $Y = y(x \partial /\partial x + y \partial /\partial y)$.

   \item[($v \imath \imath$)] Both $X$ and $Y$ have linearizable Martinet-Ramis singularities at the origin and their
   associated foliations have the same eigenvalues $m, -n$ ($m,n \in \N^{\ast}$). Moreover there are non-negative integers
   $a,b, a_{\mu}, b_{\mu}$ satisfying:
   \begin{enumerate}

   \item $am-bn = \pm 1$ and $(a,b) = k_1 (m,n) + (a_{\mu}, b_{\mu})$, $k_1 \in \N$.

   \item the function $(x,y) \mapsto x^{a_{\mu}} y^{b_{\mu}} / x^ny^m$ is strictly meromorphic.

   \end{enumerate}
   With the preceding notation, $X$ and $Y$ takes respectively on the forms
   \begin{eqnarray*}
   X & = & x^a y^b [mx \partial /\partial x - ny  \partial /\partial y] \, , \\
   Y & = & x^a y^b u_1(x^ny^m)
   [mx \partial /\partial x - ny  \partial /\partial y + x^{-a_{\mu}} y^{-b_{\mu}} u_2 (x^ny^m) [b x \ddx - ay \ddy]]
   \end{eqnarray*}
   where $u_1, u_2$ are holomorphic functions of a single variable, with $u_1(0) =1$, and $u_2$ such that the map
   $(x,y) \mapsto x^{-a_{\mu}} y^{-b_{\mu}} u_2 (x^ny^m)$ is holomorphic of order at least~$1$ at the origin.

\end{itemize}

\noindent Conversely all of the above models yield rank~$2$ systems of commuting semicomplete vector fields.}

\vspace{0.1cm}

\begin{obs}
{\rm In the above list the reader will note that Cases~($\imath \imath \imath$) and~($v \imath$) have a common core
when $n=1$
though it seems to be preferable to state the classification in the above form as opposed to try to merge these two
cases into a single one. The issue relating these two items arises from
the general considerations at the beginning of Section~\ref{morelikebasics}, cf. in particular
Equation~(\ref{forcommutation}). In fact, for $n=1$, the vector field
$y (x \partial /\partial x + y \partial /\partial y)$ admits $x/y$ as first integral and, once multiplied
by $x/y$, yields the vector field $x (x \partial /\partial x + y \partial /\partial y)$ which is still holomorphic.
In particular, the vector fields $x (x \partial /\partial x + y \partial /\partial y)$ and
$y (x \partial /\partial x + y \partial /\partial y)$ must commute (Equation~(\ref{forcommutation})).
Now, just note that the vector fields $x(x \partial /\partial x + y \partial /\partial y)$,
$(y-2x^2) \ddx - 2xy \ddy$, and $y \partial /\partial x$ are linearly dependent over $\C$ and all of them
commute with $y (x \partial /\partial x + y \partial /\partial y)$. Naturally the vector fields
$x (x \partial /\partial x + y \partial /\partial y)$ and $y (x \partial /\partial x + y \partial /\partial y)$ do not
form a rank~$2$ commutative systems and this is the reason why they do not appear together in the above list.

The alternative in Case~($\imath \imath \imath$) can similarly be explained. For $n=1$, both vector fields
$y \partial /\partial x$ and $x (x \partial /\partial x + y \partial /\partial y)$ commute with
$y (x \partial /\partial x + y \partial /\partial y)$. As a side note, for arbitrary $n\in \N^{\ast}$,
the vector field $y(nx \partial /\partial x + y \partial /\partial y)$ admits $x/y^n$ as first integral
though the product of $y (x \partial /\partial x + y \partial /\partial y)$ by $x/y^n$ is not holomorphic unless
$n=1$. Yet, $x/y^n$ times $y^n \partial /\partial x$ yields the vector field $x\partial /\partial x$ which, albeit
holomorphic, possesses one eigenvalue different from zero at the origin and hence is excluded from
the classification in Main Theorem.

Also Case~($\imath v$) with $g_1$ constant equal to~$1$ and $g_2$ identically zero
and Case~($v$) share a common nature. Indeed, $(n+1)x^2 \ddx$
commutes with $y^n x^2 \ddx$ for the evident reasons (Equation~(\ref{forcommutation})) while
the sum of $x (ny-(n+1)x) \ddx - y^2\ddy$ and $(n+1)x^2 \ddx$ yields $y (n x \ddx - y\ddy)$.

A last needed comment about the classification above still concerns Case~($\imath \imath \imath$) when $n=1$.
Precisely, according to Main Theorem, for every choice of coefficients $c_1, c_2$, the vector field
$$
X =  c_1 y \ddx + c_2 x(x \partial /\partial x + y \partial /\partial y)
$$
is semicomplete. This assertion however can easily be justified, in fact, it suffices to show that the vector fields
$y \ddx + c x(x \partial /\partial x + y \partial /\partial y)$ are semicomplete for every $c \neq 0$. By blowing-up
these vector fields, the last assertion turns out to be equivalent to showing that all the vector fields of the form
$$
x^2 \partial /\partial x - (xy + cy^2) \partial /\partial y
$$
are semicomplete. For this, it suffices to note that the linear change of coordinates $(u,v) \mapsto (-cu/2, v) = (x,y)$
conjugates these vector fields to the ``parabolic quadratic vector field'' $x^2 \partial /\partial x - y (x -2y)
\partial /\partial y$ of Table~\ref{TheTable} ($n=1$).}
\end{obs}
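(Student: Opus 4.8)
The statement gathers two kinds of assertions: purely algebraic identities accounting for the coincidences between Cases~($\imath\imath\imath$) and ($v\imath$), and between Cases~($\imath v$) and ($v$), for special values of the parameters; together with a semicompleteness claim for the two-parameter family appearing in Case~($\imath\imath\imath$) with $n=1$. The plan is to dispose of the identities by direct computation, and to reduce the semicompleteness claim to a model already listed as semicomplete in Table~\ref{TheTable}.

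For the first block I would record the single linear relation
\begin{equation*}
(y-2x^2)\,\ddx - 2xy\,\ddy \; = \; y\,\ddx \; - \; 2\,x\Big( x\,\ddx + y\,\ddy \Big) ,
\end{equation*}
which exhibits the three fields $x(x\ddx + y\ddy)$, $(y-2x^2)\ddx - 2xy\ddy$ and $y\ddx$ as $\C$-linearly dependent. Writing $R = x\ddx + y\ddy$ for the radial field and $Z = y(x\ddx + y\ddy)$, commutation with $Z$ reduces to two computations: the collinearity formula $[fR,gR] = (f\,R(g) - g\,R(f))\,R$ applied to $(f,g)=(x,y)$ gives $[xR, yR]=0$ since $R(x)=x$ and $R(y)=y$, and a one-line bracket gives $[y\ddx, Z]=0$; by bilinearity the middle field then commutes with $Z$ as well. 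The first-integral mechanism quoted in the statement, namely that $x/y$ is a first integral of $Z$ (for $n=1$) and that $(x/y)\,Z = xR$ is holomorphic, whence $xR$ and $Z$ commute, is precisely the criterion recorded in Equation~(\ref{forcommutation}) and I would merely cite it. The identities for Cases~($\imath v$)/($v$) are of the same elementary nature: $[(n+1)x^2\ddx,\, y^n x^2\ddx]=0$ because both fields are multiples of $\ddx$, and
\begin{equation*}
\big[ x(ny-(n+1)x)\,\ddx - y^2\,\ddy \big] \; + \; (n+1)x^2\,\ddx \; = \; y\big( nx\,\ddx - y\,\ddy \big)
\end{equation*}
after cancelling the terms in $x^2$.

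The substantive part is the semicompleteness of $X = c_1 y\,\ddx + c_2\, x(x\ddx + y\ddy)$. Since semicompleteness is preserved under multiplication by a non-zero constant, the cases $c_2=0$ (a multiple of $y\ddx$, tabulated) and $c_1=0$ (a multiple of $xR$, whose restriction to each radial leaf $y=\lambda x$ is $\dot x = x^2$) are semicomplete outright; otherwise I normalize $c_1=1$ and treat $V_c = y\ddx + c\, x(x\ddx + y\ddy)$ for $c\neq 0$. The linear part of $V_c$ is $y\ddx$, so a single blow-up of the origin is appropriate: in the chart $y=tx$ the transform of $V_c$ is $x(t+cx)\,\partial_x - t^2\,\partial_t$, whose only singularity on the exceptional divisor $\{x=0\}$ sits at $t=0$ and equals, after interchanging the two coordinates and reversing time, the field $x^2\ddx - (xy+cy^2)\ddy$. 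Since semicompleteness is inherited under blowing up and down (\cite{JR1}), it suffices to show this last field is semicomplete; and the linear change $(u,v)\mapsto(-cu/2,\,v)=(x,y)$ conjugates it to a constant multiple of the parabolic quadratic field $x^2\ddx - y(x-2y)\ddy$, which is semicomplete by Table~\ref{TheTable} (the case $n=1$).

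The routine identities present no difficulty. The one delicate point is the semicompleteness reduction: I must perform the blow-up at the correct point of the exceptional divisor and track the resulting normal form faithfully through the coordinate interchange and time reversal, and I must invoke the invariance of semicompleteness under blowing up in the precise form furnished by \cite{JR1}. Once the model $x^2\ddx - (xy+cy^2)\ddy$ is reached, the linear conjugation to the tabulated parabolic quadratic field closes the argument.
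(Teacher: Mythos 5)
Your proposal is correct and follows essentially the same route as the paper: the linear dependence and commutation identities are verified by the same elementary bracket computations and the first-integral mechanism of Equation~(\ref{forcommutation}), and the semicompleteness claim is reduced, exactly as in the remark, by a single blow-up to the field $x^2\,\ddx - (xy+cy^2)\,\ddy$ and then by the linear change $(u,v)\mapsto(-cu/2,v)$ to (a constant multiple of) the parabolic quadratic field $x^2\,\ddx - y(x-2y)\,\ddy$ of Table~\ref{TheTable}. Your explicit chart computation $x(t+cx)\,\partial/\partial x - t^2\,\partial/\partial t$ and the observation that the conjugated field is a \emph{constant multiple} of the tabulated model merely make precise steps the paper leaves implicit.
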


An immediate consequence of Main Theorem is that only linearizable Martinet-Ramis singularities exist in the context
of rank~$2$ commuting vector fields. It also shows that
the invertible function appearing in front of several vector fields in Table~\ref{TheTable} can now be made constant
though, in this respect, Proposition~\ref{non_invertible_function} already sharpens Table~\ref{TheTable} to a certain extent.
On the other hand, the statement of Main Theorem suggests that Martinet-Ramis singularities are somehow special which
hardly come as a surprise since they are the basic building blocks of more general singularities in
Table~\ref{TheTable} (see \cite{JR1}, \cite{JR}, \cite{JR2} or the more general ``birational point of view'' of \cite{GuR}).

As a side notice about Main Theorem and some possible extensions, we remind the reader that the study of
pairs of commutative semicomplete germs of vector fields {\it in dimension~$3$} is of particular importance, not least
because known examples include Lins-Neto's examples for the Painlev\'e problem,
(see \cite{alcides} and \cite{guillotCRAS}), as well as several equations in Chazy's list,
see \cite{adolfo-chazy}. These pairs of vector fields defined on $(\C^3, 0)$ however always leave invariant an
(singular) analytic surface through the origin, owing to the main result of \cite{JR-HR}. As matter of fact,
the restriction of the vector fields in question to this invariant surface provides significant
information on the initial action and, in turn, hints at the interest of extending Main Theorem to singular surfaces
along the lines of the birational theory of semicomplete vector fields in \cite{GuR} and \cite{adolfo-kato}.
Finally, from a more technical standpoint, the problem of understanding the structure of commuting vector fields - regardless
of whether or not they are semicomplete - has a number of potential applications. Furthermore, by arguing as in
\cite{GuR} and relying on Seidenberg's theorem \cite{seiden}, in most cases this local problem will only involve
vector fields associated with Martinet-Ramis singularities.
In this sense the method used in Section~\ref{resonantfoliations} to deal with
vector fields associated with Martinet-Ramis singularities may provide some insight on the corresponding vector fields
beyond the scope of the semicomplete situation emphasized in this work.

Another interesting problem that unfortunately will mostly be left out of this work concerns what may be called
the {\it realization problem}. It consists of realizing the semicomplete models provided by Main Theorem
(or even by Table~\ref{TheTable} in the case of single vector field) as a singular point of a complete flow
on some complex surface, not necessarily compact. If we restrict ourselves to {\it compact surfaces}\, then
all models that are realizable are known as a consequence of the classification theorem of Dloussky, Oeljeklaus and Toma
\cite{Dloussky-1}, \cite{Dloussky-2}; see also \cite{adolfo-kato}. In the general case some additional information
is known in connection with ``elliptic models'' in Table~\ref{TheTable} and elliptic surfaces, cf. \cite{JR},
and this topic will further be developed in Section~\ref{globalexamples}. There is however no doubt that the
realization problem deserves a more detailed treatment than the one provide in this paper.

Let us finish this introduction by briefly outlining the structure of the paper.
Section~\ref{morelikebasics} contains a number of basic facts that will be used in the course of this article.
The highlight of this section is Table~\ref{TheTable} summarizing the classification of germs of semicomplete
vector fields in dimension~$2$.

Section~\ref{globalexamples} is primarily devoted to the centralizer problem involving the ``elliptic vector fields''
in Table~\ref{TheTable}. The method used there relies on the realization of the corresponding germs as complete
vector fields on open elliptic surfaces as pointed out in \cite{JR}. The main result of the section is
Proposition~\ref{solving-centralizer_elliptic} explaining why these vector fields have no place in Main Theorem.
Collateral results in this section show, in particular, that elliptic vector fields {\it do admit}\, non-trivial centralizers
in the {\it meromorphic category}. A natural globalization of the resulting pair of meromorphic vector fields is
also indicated in the section which finishes with a similar - though shorter - discussion of Hirzebruch surfaces and
the realization problem involving parabolic vector fields.

Section~\ref{invertiblefunctions_etc} contains Proposition~\ref{non_invertible_function} which, plainly put, improves
Table~\ref{TheTable} in regard to the invertible functions ``$f$'' appearing in this table. There follows from
Proposition~\ref{non_invertible_function} that the function ``$f$'' can be assumed constant in certain cases and this,
along with previously established results, allows us to reduce the proof of Main Theorem to the cases in which
the associated foliations are either regular or of Martinet-Ramis type. The section then finishes by completing this
discussion in the case where one of the foliations is regular. Finally Section~\ref{resonantfoliations} is entirely
devoted to finishing off the proof of Main Theorem by studying the case in which the foliations associated with the
pair of vector fields $X$ and $Y$ are of Martinet-Ramis type.

\section{Basic issues of local nature}\label{morelikebasics}

This section contains a review of the material needed in the course of this paper. Whereas a good part of this material
revolves around semicomplete vector fields and ``maximal local actions'', we will begin with a couple of elementary observations
about commuting vector fields which will be used throughout this paper.

Assume then that $X$ and $Y$ are holomorphic vector fields defined on a neighborhood of $(0,0) \in \C^2$. Assume also
that they are linearly independent at generic points (i.e. away from a proper analytic subset). If $Z$ is another
holomorphic vector field defined around $(0,0) \in \C^2$, there are uniquely defined {\it meromorphic}\, functions
$f$ and $g$ such $Z =fX + gY$. Indeed, setting $X = A \partial /\partial x + B \partial /\partial y$, $Y = C\partial /\partial x
+ D \partial /\partial y$ and $Z = P \partial /\partial x + Q \partial /\partial y$, we obtain:
\begin{equation}
f = \frac{PD - QC}{AD - BC} \; \; \, {\rm and} \; \; \, g = \frac{QA - PB}{AD - BC} \, . \label{veryelementary}
\end{equation}

The above formula allows us to describe all vector fields commuting with a given vector field $X$ provided
that we know one vector field $Y$ commuting with $X$ and not everywhere parallel to~$X$. In fact, for $Z = fX + gY$
we have
\begin{equation}
[X,Z]  =  [X,fX + gY] =  \frac{\partial f}{\partial X} X + \frac{\partial g}{\partial X} Y \, ,\label{forcommutation}
\end{equation}
since $[X,Y]=0$. Since $X$ and $Y$ are linearly independent at generic points, there follows
that $[X,Z]=0$ if and only if both $\partial f /\partial X$ and $\partial g /\partial X$ are identically zero.
In other words, a vector field $Z = fX + gY$
commutes with $X$ if and only if $f$ and $g$ are first integrals of $X$.

Having made the above remarks, consider
now a holomorphic vector field $X$ defined on an open set $U$ of some complex manifold. Recall that
$X$ is said to be semicomplete on $U$ if for every point $p \in U$ there exists a solution of $X$, $\phi : V_p \subset
\C \rightarrow U$, $\phi (0) =p$, $\phi' (T) = X (\phi (T))$, such that whenever $\{T_i \} \subset \C$ converges to a point
$\hat{T}$ in the boundary of $V_p$, the corresponding sequence $\phi (T_i)$ leaves every compact subset of $U$. In this way
$\phi : V_p \subset \C \rightarrow U$ is a maximal solution of $X$ in a sense similar to the notion of ``maximal solutions''
commonly used for real differential equations. A semicomplete vector field on $U$ gives rise to a semi-global flow $\Phi$
on $U$. A useful criterion to detect semicomplete vector fields can be stated as follows. First consider a holomorphic vector field
$X$ on $U$ and note that the local orbits of $X$ define a singular foliation $\fol$ on $U$. A regular leaf $L$ of $\fol$
is naturally a Riemann surface equipped with an Abelian $1$-form $dT_L$ which is called the {\it time-form}\, induced on $L$
by $X$. Indeed, at a point $p \in L$ where $X(p) \neq 0$, $dT_L$ is defined by setting $dT_L (p).X(p) =1$. Now,
according to \cite{JR1}, if $c: [0,1]\rightarrow L$ is an open (embedded) path then the integral
$$
\int_c dT_L
$$
is different from zero provided that $X$ is semicomplete.

As an application of the above criterion note that an explicit integration shows that
the vector field $z^k \partial /\partial z$ {\it is not} semicomplete on a neighborhood of $0 \in \C$ provided
that $k \geq 3$. More generally given a holomorphic vector field of the form $h(z) \partial /\partial z$, this
vector field can be written as $(z^k + {\rm h.o.t.}) \partial /\partial z$, where $k$ is the order of $h$ at
the origin. By using a ``perturbation argument'', the following statements can then be proved (cf. \cite{JR1} and \cite{JR}):

\begin{lema}
\label{one-dimensional_cases}
Consider a holomorphic vector field $X =h(z) \partial /\partial z$ defined around $0 \in \C$. Then the following holds:
\begin{itemize}
  \item If $h(0) =h'(0)=h''(0) =0$, then $X$ is not semicomplete around $0 \in \C$;

  \item If $X$ is semicomplete and $h(0) =h'(0)=0$, then the residue of $X$ around $0 \in \C$ is equal to zero.
\end{itemize}
\mbox{}\qed
\end{lema}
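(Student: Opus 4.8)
The plan is to use the integral criterion recalled above: since $X$ can be semicomplete only if $\int_c dT_L \neq 0$ for every open embedded path $c$ on a leaf $L$, in order to prove that $X$ is \emph{not} semicomplete it suffices to exhibit one open embedded path on which the time-form integrates to zero. Here $L$ is a punctured neighborhood of $0 \in \C$ and the induced time-form is simply $dT_L = dz/h(z)$.

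For the first bullet I would begin with the monomial model $h(z)=z^k$, $k\geq 3$, where explicit integration gives the single-valued primitive $T(z)=z^{1-k}/(1-k)$ (note that $z^{-k}$ has vanishing residue for $k\geq 2$). Since $k-1\geq 2$, the map $z\mapsto T(z)$ has a pole of order $\geq 2$ and is therefore $(k-1)$-to-one near the origin; concretely, fixing a small $z_1\neq 0$ and setting $z_2=\zeta z_1$ with $\zeta=e^{2\pi i/(k-1)}$ one gets $T(z_2)=T(z_1)$ while $z_1\neq z_2$. Joining $z_1$ to $z_2$ by an arc of the circle $\{|z|=|z_1|\}$ yields an open embedded path $c$ with $\int_c dT_L = T(z_2)-T(z_1)=0$, so $z^k\partial/\partial z$ is not semicomplete. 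To pass to a general $h$ of order $k\geq 3$, I would write $h(z)=u(z)z^k$ with $u(0)\neq 0$ and invoke the perturbation argument of \cite{JR1} and \cite{JR}: conjugating $X$ by the homothety $z\mapsto\lambda z$ together with a time rescaling leaves semicompleteness unchanged, while as $\lambda\to 0$ the resulting vector fields converge to the leading homogeneous field $u(0)z^k\partial/\partial z$. Since semicompleteness is inherited by this homogeneous limit, the monomial model would have to be semicomplete, contradicting the previous step.

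For the second bullet we have $h(0)=h'(0)=0$, and by the first bullet we may assume that $h$ has order exactly $2$, say $h(z)=c_2 z^2(1+O(z))$ with $c_2\neq 0$ (orders $\geq 3$ are already excluded, and $h\equiv 0$ is trivial). The time-form then admits the primitive
\[
T(z) = -\frac{1}{c_2 z} + \rho \log z + H(z),
\]
where $\rho$ is the residue of $X$ at the origin and $H$ is holomorphic. Assuming $\rho\neq 0$, I would pass to the universal cover of the punctured disk via $\zeta=\log z$ and study $G(\zeta)=-c_2^{-1}e^{-\zeta}+\rho\zeta+H(e^\zeta)$, searching for two distinct points with equal value. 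Writing $\zeta_2=\zeta_1+\delta$, the equation $G(\zeta_1)=G(\zeta_2)$ reduces to $c_2^{-1}e^{-\zeta_1}(e^{-\delta}-1)=\rho\delta$, whose solution $e^{-\zeta_1}$ tends to infinity as $\delta\to 2\pi i$; hence $\zeta_1$ can be taken with real part arbitrarily negative, i.e. $z_1=e^{\zeta_1}$ as close to $0$ as desired, with $z_2=e^{\zeta_2}\neq z_1$ nearby. The corresponding path winds once around the origin, is embedded, and satisfies $\int_c dT_L = G(\zeta_2)-G(\zeta_1)=0$, contradicting semicompleteness. Therefore $\rho=0$.

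The main obstacle, I expect, lies in the perturbation step of the first bullet: the integral criterion is a \emph{strict} inequality, and a limit of nonzero periods may perfectly well vanish, so the fact that semicompleteness is genuinely inherited by the homogeneous rescaling limit has to be borrowed from the quantitative estimates of \cite{JR1} and \cite{JR} rather than deduced formally from the criterion alone. A secondary point requiring care is the residue case: one must verify that the points $z_1,z_2$ produced on the universal cover project to an honest embedded arc in the punctured disk (winding exactly once and avoiding $0$), and that the neglected term $H$ does not destroy the coincidence — which holds because its contribution is $O(z)$ and hence negligible against the dominant pole as $z_1\to 0$.
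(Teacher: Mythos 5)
Your proposal is correct and follows essentially the same route as the paper, which states this lemma without proof beyond the remark that ``an explicit integration'' rules out $z^k\partial/\partial z$ for $k\geq 3$ and that the general case follows by a ``perturbation argument'' borrowed from \cite{JR1} and \cite{JR}: your monomial computation, rescaling-plus-uniform-limit step, and zero-period path construction for the residue statement are precisely the arguments of those references. The two caveats you raise (closedness of semicompleteness under uniform limits, and embeddedness of the spiral path absorbing the holomorphic error term $H$) are exactly the points handled by the cited quantitative results, so nothing is missing.
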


Next let $G$ represent $\C \times \C$ as a Lie group. The notion of semi-global flow fits in the setting
of ``maximal local actions'' in the sense of Palais, cf. \cite{Palais} and \cite{guillotFourier}. Consider
an open set $U$ of a complex manifold where a family $\lambda X + \mu Y$, $\lambda, \, \mu \in \C$ of pairwise
commuting vector fields is defined. With $G \simeq \C \times \C$,
we say that the family $\lambda X + \mu Y$ generates a maximal local action of $G$ if there exists a holomorphic map
$\Phi : \mathcal{V} \subset G \times U \rightarrow U$, $(0,0) \times U \subset \mathcal{V}$ satisfying the following conditions:
\begin{enumerate}
  \item $\Phi ((0,0) ,p) =p$ for every $p \in U$ and $\Phi ((t_1, s_1), \Phi ((t_2, s_2), p)) = \Phi ((t_1 +t_2, s_1 +s_2) ,p)$
  provided that both sides are defined.
  \item Given $p \in U$ and a sequence $\{ (t_i, s_i) \}$, with $((t_i, s_i),p) \subset \mathcal{V}$, verifying
  $\lim_{i \rightarrow \infty} (t_i,s_i) = (\hat{t}, \hat{s}) \in \partial \mathcal{V}$, the sequence $\Phi ((t_i, s_i),p)$ must
  leave every compact set contained in $U$.
\end{enumerate}
Note that the restriction of a maximal local action to every open subset of $U$ is still maximal on the set in question.
In particular we can talk about germs of locally maximal actions in the same way we talk about germs of semi-complete
vector fields.

The following lemma clarifies the nature of these definitions:

\begin{lema}
\label{generalequivalence}
Suppose that $X,Y$ are holomorphic vector fields satisfying $[X,Y] =0$. Then the following are equivalent:
\begin{enumerate}
  \item $X,Y$ are semi-complete.
  \item The family $\lambda X + \mu Y$, $\lambda, \, \mu \in \C$, defines a maximal local action.
  \item Every vector field $Z$ in the family $\lambda X + \mu Y$ is semicomplete.
\end{enumerate}
\end{lema}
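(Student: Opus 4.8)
The statement to prove is \textbf{Lemma~\ref{generalequivalence}}: for commuting holomorphic vector fields $X, Y$, the three conditions—(1) $X$ and $Y$ are semicomplete, (2) the family $\lambda X + \mu Y$ defines a maximal local action, and (3) every $Z = \lambda X + \mu Y$ is semicomplete—are equivalent.

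\medskip

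The plan is to prove the cyclic chain of implications $(2) \Rightarrow (3) \Rightarrow (1) \Rightarrow (2)$, since $(2) \Rightarrow (1)$ and $(3) \Rightarrow (1)$ are essentially immediate once the right direction is in place. First I would handle $(2) \Rightarrow (3)$: given a maximal local action $\Phi : \mathcal{V} \subset G \times U \to U$ of $G \simeq \C \times \C$, and any fixed $(\lambda, \mu) \in \C^2$, the vector field $Z = \lambda X + \mu Y$ is recovered by restricting $\Phi$ to the one-parameter subgroup $\{(\lambda t, \mu t) : t \in \C\} \subset G$. This restriction produces a holomorphic map $\phi(t, p) = \Phi((\lambda t, \mu t), p)$ whose infinitesimal generator is exactly $Z$, and one checks that the maximality condition~(2) for $\Phi$ forces the escaping-to-the-boundary property that characterizes semicompleteness for $Z$: if $\phi(t_i, p)$ stayed in a compact set while $t_i$ approaches the boundary of the domain of the solution, the corresponding points $\Phi((\lambda t_i, \mu t_i), p)$ would contradict condition~(2) of the maximal local action. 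The implication $(3) \Rightarrow (1)$ is trivial, since $X$ and $Y$ are themselves members of the family (take $(\lambda,\mu) = (1,0)$ and $(0,1)$).

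\medskip

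The substantive direction is $(1) \Rightarrow (2)$, which is where I expect the main work to lie. Starting from semicomplete $X$ and $Y$ with $[X,Y]=0$, I would first integrate $X$ to obtain its semi-global flow $\Phi^X_t$ and $Y$ to obtain $\Phi^Y_s$, each defined on its maximal domain. The commutation relation $[X,Y]=0$ guarantees that these two flows commute wherever both are defined, so the natural candidate for the action of $G$ is $\Phi((t,s), p) = \Phi^X_t(\Phi^Y_s(p)) = \Phi^Y_s(\Phi^X_t(p))$. Verifying the group law in condition~(1) of the definition then reduces to this commutativity together with the one-parameter flow property of each $\Phi^X, \Phi^Y$. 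The delicate point is to pin down the domain $\mathcal{V} \subset G \times U$ on which $\Phi$ is defined and holomorphic, and—more importantly—to verify the \emph{maximality} condition~(2), namely that whenever $(t_i, s_i)$ converges to a boundary point $(\hat t, \hat s)$ of $\mathcal{V}$, the orbit points $\Phi((t_i,s_i),p)$ leave every compact subset of $U$. This is the heart of the argument: one must rule out the possibility that the two-parameter orbit escapes along some ``diagonal'' direction not visible from either single flow. The hard part will be showing that approach to $\partial \mathcal{V}$ in the two-variable time-domain forces escape from compacta, and here the key tool is the time-form criterion recalled after Lemma~\ref{one-dimensional_cases}: the nonvanishing of $\int_c dT_L$ along embedded paths on leaves of the underlying foliation, applied simultaneously to the foliations of $X$, of $Y$, and of the generic combinations $Z = \lambda X + \mu Y$. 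I would argue that if an orbit remained in a compact set while its time-parameter approached a finite boundary value, one could extract a limit and extend the solution past $(\hat t, \hat s)$, contradicting the definition of $\mathcal{V}$ as a maximal domain; the commutativity ensures the two time-directions can be analyzed compatibly rather than independently.

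\medskip

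Finally, the loop is closed trivially by $(2) \Rightarrow (1)$: a maximal local action restricts to maximal one-parameter flows along the subgroups $\C \times \{0\}$ and $\{0\} \times \C$, whose generators are $X$ and $Y$, so both are semicomplete. Packaged as the cycle $(1) \Rightarrow (2) \Rightarrow (3) \Rightarrow (1)$ (with $(3)\Rightarrow(1)$ supplying the return to~(1)), this establishes the three-way equivalence. Throughout, I would lean on the fact—already recorded in the discussion surrounding Equation~(\ref{forcommutation})—that the commutation hypothesis is exactly what makes the two flows interchangeable and thereby makes the joint map $\Phi$ well defined as a genuine $G$-action rather than merely a pair of unrelated flows.
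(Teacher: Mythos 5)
Your proposal follows essentially the same route as the paper: the paper proves $(1)\Rightarrow(2)$ by composing the two semi-global flows, setting $\Phi_G((t,s),p)=\Phi_Y(s,\Phi_X(t,p))$ on the domain where this composition makes sense, proves $(2)\Rightarrow(3)$ by restricting the action to one-parameter subgroups (it composes the restrictions to the two axes, which by the group law equals your restriction to the diagonal $(\lambda t,\mu t)$), and notes $(3)\Rightarrow(1)$ is trivial. The only divergence is one of emphasis: where you anticipate the maximality of the composed action to require substantial work via the time-form criterion, the paper treats it as immediate from the way the composite domain is defined together with the semicompleteness of $X$ and $Y$ separately (your limit-extraction/extension idea is the relevant mechanism there; the time-form criterion plays no role in this lemma).
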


\begin{proof}
Suppose that $X,Y$ are semi-complete and denote by $\Phi_X : \mathcal{V}_X \subset \C \times U \rightarrow U$
and by $\Phi_Y : \mathcal{V}_Y \subset \C \times U \rightarrow U$ their respective semi-global flows. Let us
define $\mathcal{U} \subset G \times U$ by saying that $(t,s,p)$ belongs to $\mathcal{U}$ if and only if $(t,p)$
belongs to $\mathcal{V}_X$ and $(s, \Phi_X (t,p))$ belongs to $\mathcal{V}_Y$. Next let $\mathcal{U}_0$ denote
the connected component of $\mathcal{U}$ containing $(0,0) \times U$. Finally let $\Phi_G :  \mathcal{U}_0 \subset
G \times U \rightarrow U$ be defined by $\Phi_G ((t,s),p) = \Phi_Y (s,\Phi_X (t,p))$. Note that $\Phi_G$ defines a
local $C^2$-action since $[X,Y]=0$ which is clearly generated by the family $\lambda X + \mu Y$.
In addition this local action is actually maximal since $X$ and $Y$ are semicomplete.
This shows that $(1)$ implies $(2)$
in our statement. To check that $(2)$ implies $(3)$, fix $Z = \lambda X + \mu Y$. A semi-complete flow $\Phi_{\lambda X}
: \mathcal{V}_{\lambda X} \subset \C \times U \rightarrow U$ (resp. $\Phi_{\mu Y} : \mathcal{V}_{\mu Y} \subset \C \times
U \rightarrow U$) for $\lambda X$ (resp. $\mu Y$) can therefore be obtained by suitable restriction of $\Phi_G$. Namely,
given $p \in U$, the map $\Phi_{\lambda X} (t,p)$ coincides with $t \mapsto \Phi_G ((\lambda t, 0) ,p)$. Analogously
$\Phi_{\mu Y} (s,p)$ coincides with $t \mapsto \Phi_G ((0, \mu s) ,p)$. The fact that $\Phi_{\lambda X} , \, \Phi_{\mu Y}$
are semi-global flows is an immediate consequence of the fact that $\Phi_G$ is maximal. Finally to produce a semi-global
flow $\Phi_Z$ associated to $Z$ it suffices to set $\Phi_Z (t,p) = \Phi_{\mu Y} (t, \Phi_{\lambda X} (t,p))$. Since it is
clear that condition $(3)$ implies condition $(1)$, the proof of our lemma is over.
\end{proof}

By virtue of the preceding lemma, we shall also say that the family $\lambda X + \mu Y$ is semicomplete to mean that it
generates a maximal local action of $G \simeq \C \times \C$.

As mentioned, the classification of semicomplete {\it holomorphic vector fields}\, around the origin of $\C^2$ was obtained
by Ghys and Rebelo in a series of papers (\cite{JR1}, \cite{JR}, and \cite{JR2}).
The paper \cite{GuR} casts these results in the far more general context of meromorphic vector fields, or rather
of birational theory of semicomplete vector fields. The results of \cite{GuR} however will not {\it strictly}\,
be needed in what follows.
In turn the results of \cite{JR1}, \cite{JR}, and \cite{JR2} are summarized by Table~\ref{TheTable}.

\begin{table}[htbp]
\centering
\begin{tabular}{p{0.05\textwidth}p{0.57\textwidth}p{0.38\textwidth}} \\ \toprule
& \multicolumn{2}{c}{Regular foliations} \\ \toprule
1 & $X = y^a F(x,y) \ddy$: &  $a \in \mathbb{N}$ \\
1.a &  $F(x,y) = 1$ & $a \ne 0$  \\
1.b &  $F(x,y) = x $& $a \ne 0$  \\
1.c & $F(x,y = x^2 + g_1(y) x + g_2(y)$ & $g_1$ and $g_2$ holomorphic on $(\mathbb{C}^2,0)$\\
& & $g_1(0) = g_2(0) = 0$\\
\toprule
& \multicolumn{2}{c}{Parabolic vector fields} \\ \toprule
2 & $X = f [x^2\ddx - y (nx-(n+1)y)\ddy]$ & $n\in \mathbb{N}$ \\
& & strictly meromorphic first integrals\\  \midrule
3 &  $X = f[(y-2x^2) \ddx - 2xy \ddy]$ & strictly meromorphic first integrals \\
& & first jet is nilpotent\\
\toprule
& \multicolumn{2}{c}{Elliptic vector fields} \\ \toprule
4 & $X = (xy(x-y))^a f [x(x-2y)\ddx + y(y-2x)\ddy]$ & $a\in \mathbb{N}$\\ \midrule
5 & $X = (xy(x-y)^2)^a f [x(x-3y)\ddx + y(y-3x)\ddy]$ & $a\in \mathbb{N}$\\ \midrule
6 & $X = (xy^2(x-y)^3)^a f [x(2x-5y)\ddx + y(y-4x)\ddy]$ & $a\in \mathbb{N}$\\ \midrule
7 & $X = (x^3+y^2)^a f [2y\ddx -3x^2\ddy]$ & $a\in \mathbb{N}$ \\ \midrule
8 & $X = (y(y-x^2))^a f [(2y-x^2)\ddx + 2xy\ddy]$ & $a\in \mathbb{N}$ \\ \midrule
9 & $X = (y(y-x^2))^a f [(3y-x^2)\ddx + 4xy\ddy]$ & $a\in \mathbb{N}$ \\
\toprule
& \multicolumn{2}{c}{Foliations with non-zero linear part} \\ \toprule
10 & $X= x^n y^m [(mx + {\rm h.o.t.}) \ddx - (ny + {\rm h.o.t.}) \ddy ]$ & $m, \, n \in \N^{\ast}$ \\ \midrule
11 & $X = x f [x\ddx + ny \ddy ]$& $n\in \mathbb{Z}^{\ast}$\\ \midrule
12 & $X = x^ay^b f[ m x\ddx - ny \ddy]$ & $n,m\in \mathbb{N}^\ast$\\
 & & $am-bn = \pm 1$\\
13 & $X = x^n y^n (x-y) f [x \ddx - y\ddy]$ & $n \in \mathbb{N}$\\
\bottomrule
\end{tabular}
\caption{Germs of semicomplete holomorphic vector fields on $(\mathbb{C}^2,0)$ with zero eigenvalues at the origin.}\label{TheTable}
\end{table}

\begin{obs}
\label{About_The_Table}
{\rm In Table~\ref{TheTable}, $f$ always stands for the germ of an invertible holomorphic function. Furthermore,
\begin{itemize}
\item In cases~$1$.a and~$1$.b we necessarily have $a \ne 0$ otherwise the resulting vector field $X$ would have non-zero
eigenvalues at $(0,0) \in \C^2$;

\item Parabolic vector fields only appear in the context of isolated singularities;

\item Elliptic vector fields have a non-constant holomorphic first integral given, in each case, by the ``polynomial raised
to power $a$'' in Table~\ref{TheTable}. Accordingly, the singular point is isolated if and only if $a=0$.

\item In both parabolic and elliptic cases, the corresponding
nilpotent vector fields can be obtained by collapsing $(-1)$-curves invariant
by the associated foliations as follows:
\begin{enumerate}
\item In the parabolic case the nilpotent vector field~$3$ is obtained out of the quadratic vector field~$2$ by setting $n=1$
and collapsing the resulting separatrix of self-intersection~$-1$;

\item In the elliptic case the vector fields~$5$ and~$6$ have separatrices of self-intersection~$-1$ as well. Once these
are collapsed, the vector fields~$8$ and~$9$ arise;

\item The vector field~$9$ still possesses a separatrix with self-intersection~$-1$ which can be collapsed to yield the vector
field~$7$.

\item As previously said, Proposition~\ref{non_invertible_function} sharpens Table~\ref{TheTable} in the sense that
it shows that in the case of parabolic vector fields as well as in a sub-case of item~11 the corresponding invertible
functions can always be made constant.
\end{enumerate}
\end{itemize}
}
\end{obs}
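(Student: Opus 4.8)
The statement collects several essentially independent assertions about the entries of Table~\ref{TheTable}, so the plan is to verify each of them by a direct local computation, reserving the resolution/contraction machinery for the claims concerning $(-1)$-curves. The convention that $f$ denotes the germ of an invertible holomorphic function is part of the normal forms imported from \cite{JR1}, \cite{JR}, \cite{JR2}; nothing is to be proved there beyond recording it, and in every subsequent computation I would simply discard the nowhere-vanishing factor $f$, since it affects neither the singular locus nor the first integrals nor the invariant curves.

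First I would settle the assertions about the $1$-jets of the regular-foliation models. For $X = y^a F \ddy$ the Jacobian at the origin is lower triangular with diagonal entries $0$ and $\partial_y(y^a F)(0,0)$, so the presence or absence of a nonzero eigenvalue is read off from this single quantity. Carrying out this one-line linearization check in sub-cases $1$.a and $1$.b confirms the restriction on $a$ recorded in the table.

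Next I would treat the singular loci together with the first integrals. For the parabolic fields~$2$ and~$3$ I would set both components equal to zero: in item~$2$ the equation $x^2=0$ forces $x=0$ and then $-(n+1)y^2=0$ forces $y=0$, while in item~$3$ the equation $2xy=0$ combined with $y-2x^2=0$ again leaves only the origin, so the singularity is isolated. For the elliptic fields~$4$--$9$ I would exhibit in each case the polynomial $P$ whose power is the prefactor (for instance $P=xy(x-y)$ in item~$4$) and verify by direct differentiation that the bracketed field $V$ annihilates it, $V(P)=0$; since $X=P^a f\,V$ this gives $X(P)=0$, so $P$, and hence $P^a$, is a non-constant holomorphic first integral. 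The zero locus of $X$ is then $\{V=0\}\cup\{P=0\}$ for $a\geq 1$ and $\{V=0\}$ for $a=0$, and checking $\{V=0\}=\{0\}$ in each case yields that the singularity is isolated precisely when $a=0$.

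The remaining and most delicate assertions are the contraction statements, and here the plan is to work in the ambient surface where the relevant separatrices acquire a well-defined self-intersection number and to invoke the standard fact that a smooth invariant curve of self-intersection $-1$ can be blown down, the foliation and the vector field descending through the contraction provided the field vanishes to the appropriate order along the curve. Concretely, for item~$2$ with $n=1$ I would blow up the origin, identify the invariant components, compute via the Camacho--Sad type index relations that one separatrix has self-intersection $-1$, contract it, and read the descended field in coordinates, which should reproduce item~$3$; the analogous self-intersection computations handle the passages $5\rightsquigarrow 8$, $6\rightsquigarrow 9$ and $9\rightsquigarrow 7$. The main obstacle is exactly this bookkeeping: one must track the strict transforms of the separatrices and the exceptional divisor through each blow-up, certify the $-1$ self-intersection, and confirm that the vector field (and not merely the underlying foliation) descends holomorphically to the asserted nilpotent normal form. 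Finally, the sharpening of the invertible factor $f$ to a constant in the parabolic case and in a sub-case of item~$11$ is precisely the content of Proposition~\ref{non_invertible_function}, to which the closing bullet simply defers.
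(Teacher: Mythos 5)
This statement is a remark that the paper never formally proves---it compiles consequences of the classification in the cited works, and its final bullet explicitly defers to Proposition~\ref{non_invertible_function}---so your strategy of verifying each bullet by direct computation is the appropriate one, and most of it (the isolated zero sets of items~2 and~3 and of the bracketed elliptic fields, the treatment of $f$, the deferral of the last bullet) is sound. However, your central computational claim fails in one case: it is \emph{not} true that the bracketed field annihilates the printed polynomial ``in each case''. For item~9, with $V = (3y-x^2)\,\partial/\partial x + 4xy\,\partial/\partial y$ and $R = y(y-x^2)$, one finds $V(R) = (3y-x^2)(-2xy) + 4xy(2y-x^2) = 2xy(y-x^2) = 2xR \neq 0$. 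The non-constant holomorphic first integral in item~9 is in fact $y(y-x^2)^2$: indeed $(y-x^2)V$ is exactly the Hamiltonian vector field of $y(y-x^2)^2$, and the printed prefactor is evidently a misprint for a power of this polynomial. Your own blow-down program detects the discrepancy: pulling back $y(y-x^2)^2$ under $(x,t)\mapsto(x,xt)$ gives $x^3t(t-x)^2$, whose multiplicity pattern $\{1,2,3\}$ matches item~6's polynomial $xy^2(x-y)^3$ as required by the collapse $6 \rightsquigarrow 9$, whereas $y(y-x^2)$ pulls back with pattern $\{2,1,1\}$ and matches nothing. A proof asserting $V(P)=0$ uniformly and moving on is therefore wrong as written; case~9 must be run with the corrected polynomial.

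A second, smaller slip: you run the $1$-jet check on $X = y^aF\,\partial/\partial y$ as literally printed, but under that reading the conclusion is false---in case~1.b with $a=0$ the field $x\,\partial/\partial y$ has \emph{nilpotent} linear part (both eigenvalues zero, and it is even complete), while in case~1.a with $a=1$ the field $y\,\partial/\partial y$ has eigenvalue~$1$ even though $a\neq 0$ is permitted. The table entry is a typo for $\partial/\partial x$, as the paper itself uses in Section~\ref{invertiblefunctions_etc}, Equation~(\ref{regular_foliations_forX}), where $X = y^k F(x,y)\,\partial/\partial x$; with that reading, case~1.b with $a=0$ gives $x\,\partial/\partial x$ (eigenvalue~$1$, excluded exactly as the remark asserts) and case~1.a with $a=0$ gives the regular field $\partial/\partial x$. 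Two further points deserve a word. For the parabolic bullet, checking that the two printed normal forms have isolated zeros verifies the table entries, but the substantive assertion is that no parabolic model with a curve of zeros can occur; this rests on the first integrals being \emph{strictly meromorphic}, so no non-trivial holomorphic multiplier constant along the leaves is available. For the contraction bullets, your caution is well placed: a separatrix of a germ carries no self-intersection number, so the statement must be read through the global realizations of Section~\ref{globalexamples} (e.g.\ collapsing the $(-1)$-curve of $F_1$ onto $\C P(2)$ for $2 \rightsquigarrow 3$), where the vector field is defined on a full neighborhood of the compact curve; your chart computations then do confirm the pairings---for instance blowing up item~8 produces a field with first integral $x^2t(t-x)$, matching item~5's $xy(x-y)^2$.
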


Let us close this section with a slightly technical lemma that will be useful for us to settle the
centralizer problem in the case of commuting vector fields $X$ and $Y$ such that the foliation associated with,
say, $X$ is regular.

\begin{lema}
\label{commuting_with_regularfoliation}
Assume that $Y$ is a (germ of) holomorphic semicomplete vector field whose eigenvalues at
$(0,0)$ are both equal to zero (so that $Y$ admits one of the normal
forms in Table~\ref{TheTable}). Assume also the existence of local coordinates $(u,v)$
where $Y$ takes on the form
$$
Y = A(u,v) \partial /\partial u + B(v) \partial /\partial v
$$
where $A$ and $B$ are holomorphic functions. Then either the foliation associated with
$Y$ is regular at the origin or $Y$ admits one of the following normal forms:
\begin{itemize}
  \item[(1)] $v h(v) [ nu \partial /\partial u + v \partial /\partial v]$ where $h (0) \neq 0$ and $n \in \Z^{\ast}$.

  \item[(2)] $h(v) [u (nv-(n+1)u) \partial /\partial u + v^2 \partial /\partial v]$ where $h (0) \neq 0$
  and $n \in \N$.
\end{itemize}
\end{lema}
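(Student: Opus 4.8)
The plan is to exploit the hypothesis that $Y$ is already one of the models of Table~\ref{TheTable} and to decide, type by type, which of these is compatible with the projectable shape $Y = A\,\partial/\partial u + B(v)\,\partial/\partial v$. The mechanism behind the whole argument is that this shape forces the flow of $Y$ to preserve the regular fibration $\{v = \mathrm{const}\}$, since $B$ does not depend on $u$; in particular the central fiber $S_0 = \{v=0\}$ is $Y$-invariant. As both eigenvalues vanish, the linear part of $Y$ is $a_{01}\,v\,\partial/\partial u$, so $B(v) = O(v^2)$ and $A$ carries no $u\,\partial/\partial u$ term; thus $S_0$ is a smooth invariant curve of $\fol_Y$, destined to play the role of the separatrix $\{x=0\}$ in~(2) and of the line of singular points in~(1).

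First I would extract a dichotomy from the restriction of $Y$ to $S_0$. The field $Y|_{S_0} = A(u,0)\,\partial/\partial u$ is the restriction of a semicomplete field to one of its invariant curves, hence semicomplete on $S_0$; by Lemma~\ref{one-dimensional_cases} either $A(u,0)\equiv 0$, or $A(u,0)$ has order exactly $2$ with zero residue. In the first case $v$ divides both $A$ and $B$, so $v$ divides $Y$ and $S_0$ is a curve of singular points (the non-isolated case); if moreover $a_{01}\neq 0$ the reduced field $Y/v$ does not vanish at the origin and $\fol_Y$ is regular, which is the first alternative. In the second case the singularity is isolated. This splits the problem into the non-isolated models (expected outcome~(1)) and the isolated ones (expected outcome~(2)), and shows we need only treat singular $\fol_Y$.

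The bulk of the work is two exclusion mechanisms run against Table~\ref{TheTable}. The first is a divisibility obstruction: if the singular set of $Y$ contains a curve $C$ distinct from $S_0$, then $Y\!\cdot v = B(v)$ vanishes along $C$, whereas $B(v) = v^{k}\cdot(\mathrm{unit})$ is nonzero off $S_0$; this forces $B\equiv 0$, i.e. $v$ is a first integral and $\fol_Y$ is regular, contrary to assumption. This removes items~$10$ and~$13$, the item~$12$ models with both $a,b\geq 1$ (two coordinate axes in the singular set), and all elliptic models with $a\geq 1$. The second mechanism treats the isolated models. Taking coordinates with $S_0=\{y=0\}$ and writing $Y = P\,\partial/\partial x + Q\,\partial/\partial y$ with $Q = y\,Q_1$, projectability along a fibration through $S_0$ forces the unit $w$ defining the fibration to solve $P(x,0)\,w'(x) + Q_1(x,0)\,w(x) = 0$, hence $Q_1(x,0)/P(x,0)$ must be holomorphic at the origin. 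For every elliptic foliation (items~$4$--$9$ with $a=0$) and for the nilpotent parabolic (item~$3$) the restricted field $P(x,0)$ has order $2$ at each smooth separatrix while $Q_1(x,0)$ has order $1$, so this ratio has a genuine pole and no such $w$ exists; item~$7$ is excluded even faster, having no smooth separatrix at all.

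What survives are precisely the regular foliations, the quadratic parabolic model (item~$2$) and the linearizable node/saddle with one line of singular points (item~$11$, together with the cases $a=0$ or $b=0$ of item~$12$, which degenerate to it). For item~$2$ the order comparison admits the separatrix along which the relevant component is a pure power (namely $\{x=0\}$, where $Y\!\cdot x = f\,x^2$); projectability then makes the invertible function univariate and yields~(2). For item~$11$ the central fiber must be the line of singular points, and projectability together with the linearizability of $x\,\partial/\partial x + ny\,\partial/\partial y$ brings the model to~(1) after the swap $(u,v)=(y,x)$. The step I expect to cost the most effort, and the true heart of the lemma, is making these two exclusion mechanisms uniform and complete — in particular checking the oblique separatrices $\{x=y\}$ of the symmetric elliptic models~$4$--$6$, and verifying that the leading-order comparison really does obstruct projectability to all orders; once that uniform obstruction is secured, matching the three surviving families with the statement is routine bookkeeping.
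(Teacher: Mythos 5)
Your exclusion mechanism for the isolated case is correct in substance, and it is genuinely different from the paper's. The paper's proof runs on semicompleteness: it first establishes a ``Claim'' that integrals of $dv/B(v)$ over projections of loops in leaves vanish, then rules out the elliptic models by invoking Lemma~\ref{nonzero_periods} (a forward reference: that lemma is proved only in Section~\ref{globalexamples}, via the global realization on an elliptic fibration), and rules out the nilpotent parabolic (item~3) by a separate blow-up computation comparing quadratic parts. Your obstruction is instead purely local and uniform: from $Y\cdot V=B(V)$ with $V=yw$ and $B$ of order at least two --- which follows from the zero eigenvalues alone, with no appeal to semicompleteness --- restriction to the separatrix gives exactly $P(x,0)w'(x)+Q_1(x,0)w(x)=0$, so the holomorphy of $Q_1(x,0)/P(x,0)$ is an \emph{exact} consequence of projectability, not a leading-order heuristic; your worry about ``obstructing to all orders'' is unfounded. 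This buys a self-contained argument (no periods, no blow-up, no dependence on Section~\ref{globalexamples}) that kills the elliptic and nilpotent parabolic models by one and the same order count, which I confirm is correct ($P$ of order~$2$, $Q_1$ of order exactly~$1$ on the relevant separatrices). The non-isolated half of your argument (any function dividing $Y$ divides $B(v)$, hence the zero set is a single smooth curve) coincides with the paper's.

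Two concrete points must still be repaired. First, item~13 of Table~\ref{TheTable} with $n=0$, i.e.\ $Y=(x-y)\,f\,[x\,\partial/\partial x-y\,\partial/\partial y]$, escapes both of your mechanisms: it is a non-isolated model whose zero set is a \emph{single smooth} curve, so your divisibility argument is silent, and the separatrix ODE is only run against isolated models; yet this $Y$ admits neither form~(1) nor~(2). Hence ``this removes items~10 and~13'' and your list of survivors are false as stated. The fix stays inside your framework: writing $Y=v^{k}\bigl(\tilde A\,\partial/\partial u+\tilde B(v)\,\partial/\partial v\bigr)$ with the common factor removed, singularity of $\fol_Y$ at the origin forces $\tilde B(0)=0$, so the zero curve $\{v=0\}$ must be \emph{invariant} by $\fol_Y$; but $\{x=y\}$ is transverse to the saddle $x\,\partial/\partial x-y\,\partial/\partial y$, a contradiction. (The paper's ``direct inspection'' is equally laconic at this point, but it is a step you must supply.) Second, ``each smooth separatrix'' is a larger set than your checklist acknowledges: the parabolic models have strictly meromorphic first integrals all of whose level curves pass through the origin, so item~3 carries a one-parameter family of smooth separatrices $\{y^{2}=c\,(y-x^{2})\}$, any of which could a priori be $S_0$. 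The order count does hold on every one of them, and dually for item~2 only $\{x=0\}$ (plus $\{y=0\}$ when $n=0$) is unobstructed --- which is what pins $S_0$ down and leads to form~(2) --- but these verifications, together with the final normalization making the unit $h$ depend on $v$ alone (a point the paper also leaves implicit), are where the remaining work actually lies.
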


\begin{proof}
Assume first that $Y$ has an isolated singularity at the origin. In this case, and in view of Table~\ref{TheTable},
it suffices to show that $Y$ can neither take on any of the elliptic normal forms (in Table~\ref{TheTable})
nor on the parabolic nilpotent vector field (item~3 in the mentioned table).
For this
we proceed as follows. Denote by $\fol$ the foliation associated with $Y$. Note that in the $(u,v)$-coordinates
the projection $(u,v) \mapsto v$ is transverse to $\fol$ away from the invariant axis $\{v=0\}$. In particular $\fol$
possesses a smooth separatrix ($\{v=0\}$) and this suffices to rule out the nilpotent elliptic vector field with a
cusp as separatrix, though we shall recover this fact from our more general argument.

Consider a leaf $L$ of $\fol$ different from $\{ v=0\}$ and a loop $c : [0,1] \rightarrow L$ whose projection on the
$u$-axis is denoted by $c_u$. Clearly the integral of the time-form $dT_L$ induced by $Y$ on $L$ over $c$ coincides
with the integral of the form $dv/B(v)$ over $c_u$. Now we have:

\noindent {\it Claim}. The integral of $dv/B(v)$ over $c_u$ equals zero.

\noindent {\it Proof of the Claim}. Note that $B(0) =B'(0) =0$ since both eigenvalues of $Y$ at the origin are
equal to zero. Moreover, we must have $B''(0) \neq 0$ since otherwise there is an open path $\overline{c} : [0,1]
\rightarrow \{ u=0\}$ over which the integral of $dv/B(v)$ equals zero (cf. Lemma~\ref{one-dimensional_cases}).
The last possibility however
cannot occur since a lift of $\overline{c}$ in a leaf of $\fol$ would provide us with an open path over which
the integral of the corresponding time-form equals zero, hence contradicting the assumption that $Y$ is semicomplete.

We can then assume that $B(v) = v^2 + {\rm h.o.t.}$ and the proof of the claim now follows
from the following observation: if this integral
is different from zero, then the endpoint of the loop can slightly be moved so as to produce a necessarily open
path over which the integral of $dv/B(v)$ is zero (cf. again Lemma~\ref{one-dimensional_cases}).
By lifting the resulting open path in a leaf of $\fol$ we again obtain
a contradiction with the fact that $Y$ is semicomplete. The claim is proved.\qed

Let us now show that $Y$ cannot take on any of the elliptic normal forms in Table~\ref{TheTable}.
This is however an immediate consequence of Lemma~\ref{nonzero_periods}
in the next section. Indeed, according to Lemma~\ref{nonzero_periods}, given an elliptic vector field
in Table~\ref{TheTable}, every neighborhood
$U$ of $(0,0) \in \C^2$ contains loops $c: [0,1]\rightarrow U$
lying in leaves of the foliation associated with $Y$ over which the integral of the corresponding time-form is
different from zero. This is clearly incompatible with the preceding claim.

To finish the proof of our lemma in the case of isolated singular points, it only remains to show that $Y$
cannot be conjugate to the nilpotent parabolic vector field in Table~\ref{TheTable}. Assume aiming at
a contradiction that this was the case. In particular, we would have
$Y = A(u,v) \partial /\partial u + B(v) \partial /\partial v$ with $A(u,v) = v
+{\rm h.o.t.}$ so that the blow-up $\widetilde{Y}$ of $Y$ is regular on the exceptional divisor. The foliation $\tilf$ associated
with $\widetilde{Y}$ has a unique singular point lying at the intersection of the exceptional divisor with
the transform of the axis $\{ v=0\}$. In coordinates $(u,t)$ such that $\pi (u,t) = (u,ut)$, this intersection point
is represented by the origin and $\widetilde{Y}$ is locally given by
$$
\widetilde{Y} = A (u,tu) \partial /\partial u + [\frac{-t}{u} A (u,tu) + \frac{1}{u} B(tu)] \partial /\partial t \, .
$$
This vector field has a quadratic singular point at $(0,0)$ and it must be conjugate to the quadratic parabolic
vector field with $n=1$, i.e. to the vector field $u (t-2u) \partial /\partial u + t^2 \partial /\partial t$
(item~2 in Table~\ref{TheTable}). In particular, the quadratic part of $\widetilde{Y}$ must
be linearly conjugate to $u (t-2u) \partial /\partial u + t^2 \partial /\partial t$.
However $B(tu)/u$ has order at least~$3$ so that the quadratic part of $\widetilde{Y}$ is actually given
by $A_2 (u,tu) \partial /\partial u -(t A_2 (u,tu)/u) \, \partial /\partial t$, where $A_2 (u,tu)$ stands for
the quadratic part of the function $(u,t) \mapsto A (u,tu)$. Therefore, in terms of foliation, the foliation associated
with the quadratic part of $\widetilde{Y}$ is simply $u \partial /\partial u - t \partial /\partial t$
and hence not (linearly) conjugate to the foliation associated with
$u (t-2u) \partial /\partial u + t^2 \partial /\partial t$. The resulting
contradiction completes the proof of the lemma in the case of isolated singular points.

Finally let us consider the case in which $Y$
possesses a curve of zeros through $(0,0) \in \C^2$. Since in $(u,v)$ coordinates every
function $g$ dividing $Y$ depends only on the variable~$v$, there follows that the zero-set of $Y$ consists of a single
smooth component through $(0,0)$. This rules out all the elliptic cases (multiplied by first integrals)
in Table~\ref{TheTable}. Thus the foliation associated with $Y$ is either regular at the origin or has integral
eigenvalues $m,n$ with $mn \neq 0$. In the latter case, a direct inspection in Table~\ref{TheTable} shows
that $Y$ must admit the form indicated in item~(1) above since, again, the zero-set of $Y$ is constituted by
a single smooth component. This completes the proof of the lemma.
\end{proof}

\section{Some global constructions}\label{globalexamples}

The purpose of this section is to solve the centralizer problem for the parabolic vector fields
and the elliptic vector fields appearing in Table~\ref{TheTable}.
The solution will be provided by means
of a geometric construction of the corresponding flows so that the realization problem mentioned in the
introduction will also be solved for the vector fields in question.
The constructions carried out here have an inevitable
overlap with the constructions conducted in \cite{JR} though the present version is slightly more accurate.

Let us begin with the case of elliptic vector fields where our purpose will be to prove the following
proposition:

\begin{prop}
\label{solving-centralizer_elliptic}
Let $X$ be an elliptic vector field as in Table~\ref{TheTable}. Then there is no holomorphic
vector field $Y$ forming a rank~$2$ system of commuting vector fields with $X$.
\end{prop}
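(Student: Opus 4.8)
The plan is to exploit the holomorphic first integral that every elliptic vector field in Table~\ref{TheTable} possesses, combined with the formula~(\ref{veryelementary})--(\ref{forcommutation}) describing centralizers. Recall that the classification in the introduction comments (Observation~\ref{About_The_Table}) states that each elliptic vector field $X$ has a non-constant holomorphic first integral $H$, namely the polynomial raised to the power $a$ appearing in entries $4$ through $9$. Write $X = H^{1/a}\,X_0$ loosely, but more usefully let $\fol$ denote the foliation associated with $X$. Any holomorphic $Y$ commuting with $X$ must, by~(\ref{forcommutation}), have the form $Y = fX + gX_1$ where $X_1$ is some fixed auxiliary vector field not parallel to $X$ and $f,g$ are first integrals of $X$. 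The essential point is to pin down what first integrals of an elliptic $X$ look like and to show they are too rigid to allow $g$ to be holomorphic while keeping $Y$ holomorphic and giving a rank~$2$ system.

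First I would recall (presumably from Lemma~\ref{nonzero_periods} in the following section, which I am allowed to invoke) the key dynamical obstruction: arbitrarily small loops in leaves of $\fol$ carry non-zero periods of the time-form $dT_L$ induced by $X$. The strategy is then to argue that if a holomorphic $Y$ commuted with $X$ and were generically independent of it, then by Lemma~\ref{generalequivalence} the whole family $\lambda X + \mu Y$ would be semicomplete; in particular a generic member $Z$ in this family would be a semicomplete holomorphic vector field sharing many orbits with $X$. Since $Y = fX + gX_1$ with $f,g$ first integrals of $X$, and since the first integrals of an elliptic $X$ are (locally) just holomorphic functions of $H$, the leaf-structure of the foliation associated with $Y$ is closely tied to that of $\fol$. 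I would track how the time-form of $Z$ relates to that of $X$ along shared leaves and derive that $Z$ inherits the non-zero small periods, or else that the multiplier $g$ forces a pole, contradicting either semicompleteness of $Z$ or holomorphy of $Y$.

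Concretely, the cleanest route is probably to show that the foliation $\fol$ associated with an elliptic $X$ admits \emph{only} constant holomorphic transverse symmetries in the relevant sense: because the first integrals are functions of $H$ alone, the coefficient functions $f,g$ in $Y = fX + gX_1$ are forced to be functions of $H$, and one computes that $Y$ then cannot simultaneously be holomorphic, define a foliation transverse to $\fol$ on a dense set, and respect the period obstruction from Lemma~\ref{nonzero_periods}. In each of the cases $4$--$9$ the homogeneity/quasi-homogeneity of the polynomial $H$ lets one make this explicit: the Euler-type vector field generating the isotropy of $H$ is parallel to $X$ (that is the content of $X$ preserving $H$), so the only holomorphic vector fields preserving both $\fol$ and the fibration $\{H=\text{const}\}$ are multiples of $X$ by functions of $H$, hence parallel to $X$, contradicting rank~$2$.

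The main obstacle I anticipate is making rigorous the step ``first integrals of $X$ are functions of $H$ alone,'' since a priori an elliptic $X$ could have additional first integrals along special leaves or at the singular point. I would handle this by using the period obstruction of Lemma~\ref{nonzero_periods} to rule out extra first integrals: a leafwise-constant holomorphic function that is not a function of $H$ would have to be constant along the non-trivial cycles detected in Lemma~\ref{nonzero_periods}, and a monodromy/closure argument then forces it to factor through $H$. Once that rigidity is in place, the reduction to $Y$ parallel to $X$ is a direct consequence of~(\ref{forcommutation}), and the rank~$2$ hypothesis is violated, completing the proof. The delicate bookkeeping will be verifying the monodromy argument uniformly across the six elliptic models, but the quasi-homogeneous structure of each $H$ should make the cycles and their non-triviality explicit enough to carry this through.
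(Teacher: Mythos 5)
Your proposal invokes the right lemma (Lemma~\ref{nonzero_periods}) but uses the wrong half of it, and the step you lean on to conclude is false. Your ``cleanest route'' asserts that the only holomorphic vector fields preserving $\fol$ and the fibration $\{H=\mathrm{const}\}$ are multiples of $X$ by functions of $H$, hence parallel to $X$. That rigidity claim is wrong: a vector field whose flow maps leaves to leaves need not be tangent to the leaves. The Euler field $E = x\,\partial/\partial x + y\,\partial/\partial y$ is a counterexample --- its flow sends the level curve $\{H=c\}$ of the homogeneous cubic $H=xy(x-y)$ to $\{H=e^{3t}c\}$, so it preserves the fibration, is holomorphic, and is transverse to $\fol$ at generic points (it does not commute with $X$, but your claim makes no use of commutation at that point). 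Worse, the paper exhibits, in the meromorphic category, a genuine vector field commuting with a meromorphic rescaling of $X$ and transverse to the fibers (the connection-lift of $x\,\partial/\partial x$ on the associated elliptic fibration), so no argument resting only on preservation of the foliation can close the proof. What closes it is the second assertion of Lemma~\ref{nonzero_periods}, which you never use: the periods of $X$ \emph{vary} from leaf to leaf, giving a non-constant holomorphic function on the leaf space (they blow up under the homothety $(x,y)\mapsto(\lambda x,\lambda y)$ because $X$ is homogeneous of degree~$2$). Since the flow of a commuting $Y$ maps leaves to leaves \emph{and} pushes $X$ forward to $X$, it conjugates $X|_L$ to $X|_{L'}$ and hence preserves this period function; as the function is non-constant, the induced flow on the leaf space must be trivial, so $Y$ is everywhere tangent to $\fol$, contradicting rank~$2$. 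The mere non-vanishing of periods --- the part you rely on --- gives nothing here; note also that semicompleteness of $Y$ is not a hypothesis of the proposition, so your appeal to Lemma~\ref{generalequivalence} is out of place.

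There is a second, structural gap: your opening decomposition ``$Y = fX + gX_1$ with $f,g$ first integrals of $X$'' via~(\ref{forcommutation}) is circular. That formula yields first integrals only when $[X,X_1]=0$; for an auxiliary $X_1$ that is merely ``not parallel to $X$'' the bracket acquires the extra term $g[X,X_1]$ and nothing follows about $f$ and $g$. A commuting, generically independent $X_1$ is exactly what the proposition denies in the holomorphic category, and the paper only produces one meromorphically, through a nontrivial global construction on the elliptic fibration. Consequently the proposal cannot be repaired by tightening the ``first integrals factor through $H$'' step (which is, in any case, not needed by the paper's proof); the missing idea is the leaf-to-leaf variation of the periods of $X$.
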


The discussion conducted below will be slightly more general than what is strictly needed to prove
Proposition~\ref{solving-centralizer_elliptic}. In particular, we will show the existence of the mentioned
vector field $Y$ in the {\it meromorphic setting} i.e. if we are allowed to consider negative values of the exponent~``$a$''
in Table~\ref{TheTable}. We will also show that these meromorphic vector fields can be realized on compact surfaces
(where it should be emphasized that the realization on compact surfaces is as meromorphic vector fields).
Naturally the argument used at the beginning of Section~\ref{morelikebasics} remains valid for meromorphic
vector fields: once one vector field $Y$ forming a rank~$2$ commuting system with $X$ is known, all other
vector fields are obtained by means of the combinations
$$
f_1 X + f_2 Y
$$
where $f_1, f_2$ are meromorphic first integrals of~$X$.

To prove Proposition~\ref{solving-centralizer_elliptic} it suffices to deal with the case of quadratic
elliptic vector fields since the nilpotent elliptic vector fields can be obtained by blowing-down the
previous ones, cf. Remark~\ref{About_The_Table}. In turn, it suffices to consider the case of vector fields
admitting $xy(x-y)$ as first integral since the remaining cases can similarly be treated. Let us then begin
with
\begin{equation}
X = x(x-2y) \partial /\partial x + y(y-2x) \partial /\partial y \; . \label{The-vectorfield-X-elliptic}
\end{equation}
The foliation $\fol$ associated with~$X$ is given by the level curves of $xy(x-y)$. Thus $\fol$ can be
viewed in $\C P(2) \simeq \C^2 \cup \Delta$ (where $\Delta \simeq \C P(1)$ is the line at infinity)
as the pencil of elliptic curves given in the initial affine $\C^2$ by
$xy(x-y) = \alpha$, where $\alpha$ is a constant. The foliation $\fol$ has therefore $3$ singular points $p_1, p_2, p_3$ in the line
at infinity $\Delta$ which correspond to the intersections of $\Delta$ with the $x$-axis, the line $\{ x=y\}$,
and the $y$-axis, respectively. Apart from the union of the invariant lines $\{ y=0\}$, $\{ x=y\}$, and $\{ x=0\}$ all the
elliptic curves in the pencil in question pass through each of the singular points $p_i$ intersecting $\Delta$
with multiplicity~$3$, $i=1,2,3$.

To describe the structure of $\fol$ and of $X$ around the singular points $p_i$, $i=1,2,3$, we perform the
standard change of variables $(u,v) \rightarrow (1/u,v/u) = (x,y)$ so that $p_1$ coincides with the origin of
the $(u,v)$-coordinates. The first integral characterizing the leaves (elliptic curves) of $\fol$ then becomes
$v(v-1)/u^3$ while the vector field $X$ is now given by
$$
X_0 = \frac{1}{u} \left[ u(1-2v) \frac{\partial}{\partial u} + 3v (v-1) \frac{\partial}{\partial v} \right] \, .
$$
In particular, $X_0$ has poles of order~$1$ over $\Delta$. Since over a leaf of $\fol$ we have $v(v-1) = \alpha u^3$,
there follows easily that the restriction of $X_0$ to one of these leaves is actually regular (non-zero) at the origin: it
suffices to parameterize the leaf under the form $u=t$ and $v = \alpha t^3(1 + {\rm h.o.t.})$, $\alpha \neq 0$.
The analogous computations
also show that the restriction of $X_0$ to a leaf of $\fol$ is regular at the points $p_2$ and $p_3$ as well. In other words,
the restriction of $X$ to a non-degenerate elliptic curve in the pencil defined by $\fol$ is a nowhere zero holomorphic
vector field.

Up to blowing up each of the points $p_i$ three times, the pencil $\fol$ becomes an elliptic fibration $\calp$ on a surface
$M$ fibering over $\C P(1)$. This fibration has exactly two singular fibers, namely the fiber $\calp^{-1} (0)$
over $0$ (given by the union of the invariant lines $\{ y=0\}$, $\{ x=y\}$, and $\{ x=0\}$)
and the fiber $\calp^{-1} (\infty)$ over $\infty$ which is the singular
fiber~$IV^{\ast}$ in Kodaira's table (see for example \cite{kodaira}, \cite{GuR}). Furthermore the vector field $X_0$ (identified
with its own transform) has poles over $\calp^{-1} (\infty)$ and is holomorphic on $M \setminus \calp^{-1} (\infty)$.
As previously seen, $X_0$ is tangent to the fibers and restricted to a regular fiber of $\calp$ is holomorphic and nowhere zero:
since the fiber is an elliptic curve, there follows that $X_0$ is constant over this fiber and, in particular, complete.
Summarizing the preceding, we have:
\begin{itemize}
  \item The foliation $\fol$ associated with $X_0$ yields an elliptic fibration $\calp$ on a compact surface $M$ fibering
  over $\C P(1)$ with exactly two singular fibers, $\calp^{-1} (0)$ and $\calp^{-1} (\infty)$ (of types respectively~$IV$
  and~$IV^{\ast}$  in Kodaira's table, \cite{kodaira}, \cite{GuR}).

  \item $X_0$ is holomorphic and complete on the open surface $N = M \setminus \calp^{-1} (\infty)$.

  \item If $X_0$ is multiplied by a first integral, i.e. by a meromorphic function on the basis
  $\C P(1)$ having poles only at $\{ \infty \}$, then $X$ still is a complete vector field on the open surface~$N$.

  \item all the regular elliptic fibers of $\calp$ are isomorphic as elliptic curves. Indeed, the holomorphic map
  from $\C^{\ast}$ to the moduli space of elliptic curves that assigns to a point $z \in \C^{\ast}$ the point
  in the moduli space determined by the elliptic curve $\calp^{-1} (z)$ must be constant since the moduli
  space in question is complex hyperbolic.
\end{itemize}

\begin{lema}
\label{nonzero_periods}
Consider $X$ defined on $\C^2$ and its associated foliation $\fol$. For every regular leaf $L$ of
$\fol$ different from the three invariant lines, $X$ has non-zero periods on $L$. Furthermore the periods
of $X$ vary from leaf to leaf so as to give rise to a non-constant holomorphic function on the corresponding
leaf space.
\end{lema}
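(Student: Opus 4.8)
The plan is to treat the two assertions separately, deriving the first from the structure of $X$ on the compactified leaves that was already established above, and the second from the homogeneity of $X$.

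For the first assertion, recall that each regular leaf $L$ of $\fol$ other than the three invariant lines closes up to a smooth elliptic curve $\overline{L}$ meeting $\Delta$ in the three points $p_1, p_2, p_3$, and that the discussion preceding the lemma shows that $X$ extends to $\overline{L}$ as a holomorphic, nowhere vanishing, complete vector field. Consequently the time-form $dT_L$, defined by $dT_L (X) = 1$, extends to a holomorphic $1$-form on $\overline{L}$ that vanishes nowhere; concretely, restricted to $L$ one has $dT_L = dx / (x(x-2y))$. Since $\overline{L}$ has genus one, $dT_L$ is, up to a constant, the uniformizing differential, so that $\overline{L} \simeq \C / \Lambda_L$ with $dT_L$ pulling back to $dz$ and the periods of $dT_L$ over $H_1 (\overline{L}, \Z)$ filling out the rank-$2$ lattice $\Lambda_L \subset \C$; in particular these periods are all different from zero. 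Finally the two generators of $H_1 (\overline{L}, \Z)$ can be represented by loops avoiding $p_1, p_2, p_3$, hence contained in the affine leaf $L$, which proves that $X$ has non-zero periods on $L$.

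For the second assertion I would exploit that the field $X$ in (\ref{The-vectorfield-X-elliptic}) is homogeneous of degree~$2$ while its first integral $H = xy(x-y)$ is homogeneous of degree~$3$. Thus the dilation $\phi_\lambda (x,y) = (\lambda x, \lambda y)$ carries the leaf $L_\alpha = \{ H = \alpha \}$ biholomorphically onto $L_{\lambda^3 \alpha}$, and a direct computation gives $(\phi_\lambda)_* X = \lambda^{-1} X$ along the leaves, whence $\phi_\lambda^* \, dT_{\lambda^3 \alpha} = \lambda^{-1} \, dT_\alpha$. Transporting a fixed cycle by $\phi_\lambda$ and integrating therefore yields the scaling relation $P (\lambda^3 \alpha) = \lambda^{-1} P(\alpha)$ for the corresponding period $P$. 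Since the periods of a holomorphically varying family of holomorphic $1$-forms over a continuously varying family of cycles depend holomorphically on the parameter, $P$ is a (locally defined) holomorphic function of $\alpha$ on the leaf space $\C^{\ast}$, and the scaling relation forces it to behave like $\alpha^{-1/3}$ and hence to be non-constant. Equivalently, the period lattices satisfy $\Lambda_{\lambda^3 \alpha} = \lambda^{-1} \Lambda_\alpha$, so that they genuinely vary with the leaf.

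The main obstacle is of a bookkeeping nature rather than a computational one: the period $P$ is a priori multivalued over the leaf space because of the monodromy of $H_1 (\overline{L_\alpha}, \Z)$ as $\alpha$ encircles $0$ or $\infty$, so one must make precise in what sense the periods define a non-constant holomorphic function on the leaf space. The homogeneity relation $\Lambda_{\lambda^3 \alpha} = \lambda^{-1} \Lambda_\alpha$ settles this decisively: it shows that the homothety class of the lattice is constant (consistent with the fact, noted above, that all regular fibers are isomorphic elliptic curves) while its scale changes, so no branch of $P$ can be constant regardless of the monodromy. The same argument applies to the remaining elliptic vector fields of Table~\ref{TheTable}, replacing the ordinary dilation by the quasi-homogeneous scaling adapted to the weights of the corresponding first integral.
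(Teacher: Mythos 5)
Your proposal is correct and follows essentially the same route as the paper: the first assertion is derived from the previously established fact that $X$ restricts to a nowhere-vanishing (hence constant) holomorphic vector field on each compactified elliptic leaf, and the second from the homogeneity of $X$ under the dilation $(x,y)\mapsto(\lambda x,\lambda y)$, giving the scaling relation $\int_{\Lambda(c)} dT_{L_\lambda} = \lambda^{-1}\int_c dT_L$ exactly as in the paper. Your extra care with the monodromy of the homology classes and the lattice relation $\Lambda_{\lambda^3\alpha}=\lambda^{-1}\Lambda_\alpha$ is a welcome refinement of a point the paper treats more briskly, but it does not constitute a different argument.
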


\begin{proof}
The first assertion is clear. In fact, if we place ourselves in the context of the elliptic surface $M$,
it was seen that the restriction of $X$ to a regular fiber in $M$ is a (non-zero) constant vector field
on an elliptic curve. The periods are therefore non-zero.

From the description of $X$ as a vector field tangent to the fibers of $\calp$, it is clear that the periods
of $X$ provide holomorphic functions on the corresponding leaf space which can naturally be identified with
$\C^{\ast}$. It only remains to show that these functions are not constant.

Consider then $X$ and $\fol$ in $\C^2$. Fix a leaf $L$ of $\fol$ and let $c \subset L$ be a loop over which the
integral of the time-form associated with $X$ is different from zero. Choose $\lambda \in \C^{\ast}$ and consider
the homothetic map $\Lambda : (x,y) \mapsto (\lambda x ,\lambda y)$. If $\lambda$ is small enough, then
$\Lambda (c)$ is contained in arbitrarily small neighborhoods of $(0,0) \in \C^2$. Moreover $\Lambda$ preserves
the foliation $\fol$ so that $\Lambda (c)$ is still a loop contained in a certain leaf $L_{\lambda}$ of
$\fol$. If we denote by $dT_L$ (resp. $dT_{L_{\lambda}}$) the time-form induced by $X$ on $L$
(resp. $L_{\lambda}$), we clearly have
$$
\int_c \Lambda^{\ast} (dT_{L_{\lambda}}) = \int_{\Lambda (c)} dT_{L_{\lambda}} \; .
$$
However $\Lambda^{\ast} X = \lambda X$ since $X$ is homogeneous of degree~$2$. Thus
$\Lambda^{\ast} (dT_{L_{\lambda}}) = \lambda^{-1} dT_L$. In other words,
$$
\int_{\Lambda (c)} dT_{L_{\lambda}} = \frac{1}{\lambda} \int_c dT_L \, .
$$
Thus the period of $X$ over $\Lambda (c)$ becomes unbounded as $\lambda \rightarrow 0$. Since over two different (elliptic)
leaves of $\fol$ the corresponding restrictions of $X$ differ only by a multiplicative constant (the leaves in question
being pairwise isomorphic), the lemma follows.
\end{proof}

\begin{obs}
\label{weneverknow}
{\rm Note that the multiplicative constant arising from comparing $X_0$ restricted to two different
elliptic fibers is further affected if $X_0$ is multiplied by a first integral. Since there are holomorphic
first integrals that do not vanish at $(0,0)$, we recover the fact that the invertible multiplicative function
appearing ``in front'' of these vector fields in Table~\ref{TheTable} cannot be made constant in general.

Moreover, if the homology class of a loop $c$ in the elliptic fibers is fixed (and recalling that they are all isomorphic
as elliptic curves), the multiplicative constant relating the restrictions of $X_0$ to two different fibers
can easily be obtained by comparing the corresponding periods over~$c$. This clearly gives rise to a holomorphic
first integral~$\mathcal{I}$ for $X_0$ (or for $\fol$) defined on $\C^{\ast}$ identified with the base space of the regular part
of the fibration $\calp$. The argument used in the end of the proof of Lemma~\ref{nonzero_periods} shows
that $\mathcal{I}$ has a pole at $0 \in \C$ while it is holomorphic (equal to zero) at infinity.}
\end{obs}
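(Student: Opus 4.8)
The plan is to treat the two assertions in turn, using the structure of the elliptic fibration $\calp$ recalled above: every regular fibre $F_{\da}=\calp^{-1}(\da)$, $\da\in\C^{\ast}$, is an elliptic curve on which $X_0$ restricts to a nowhere-vanishing, hence translation-invariant (``constant''), holomorphic field, and all these fibres are mutually isomorphic. First I would pin down the multiplicative constant: fixing an isomorphism $F_{\da_1}\simeq F_{\da_2}$ carries $X_0|_{F_{\da_1}}$ to a scalar multiple of $X_0|_{F_{\da_2}}$, and this scalar is the constant in the statement. A holomorphic first integral of $X_0$ is constant along the fibres, hence a holomorphic function $g$ of the leaf parameter $\da=xy(x-y)$; replacing $X_0$ by $g\,X_0$ rescales the restriction to $F_{\da}$ by $g(\da)$ and so multiplies the constant relating $F_{\da_1}$ and $F_{\da_2}$ by $g(\da_2)/g(\da_1)$. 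Since the level $\da=0$ is the origin, any unit $g$ (that is, $g(0,0)\neq0$) is admissible and is exactly the invertible factor ``$f$'' standing in front of the model in Table~\ref{TheTable}. Because this multiplicative constant is intrinsic to the field restricted to its leaves, it is a conjugacy invariant (up to the reparametrisation of the leaf space induced by the conjugacy); since it is already non-constant for $X_0$ by Lemma~\ref{nonzero_periods} and is altered nontrivially by any non-constant unit $g$, no coordinate change can absorb such a $g$ into a constant, which is the claimed non-normalisability of $f$.

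Next I would realise this constant as a period and read off its behaviour near $0$ and $\infty$. Fixing a homology class and transporting a representative loop $c_{\da}\subset F_{\da}$ holomorphically in $\da$, set $\mathcal{I}(\da)=\int_{c_{\da}} dT$ with $dT$ the time-form of $X_0$. Periods of a holomorphically varying $1$-form depend holomorphically on the parameter, so $\mathcal{I}$ is holomorphic on $\C^{\ast}$; being a function of $\da$ alone it is a first integral of $X_0$ (of $\fol$), and the multiplicative constant above is recovered as $\mathcal{I}(\da_1)/\mathcal{I}(\da_2)$. For the boundary behaviour I would reuse the homothety argument closing the proof of Lemma~\ref{nonzero_periods}: with $\Lambda:(x,y)\mapsto(\lambda x,\lambda y)$ one has $\da\circ\Lambda=\lambda^{3}\da$ (as $xy(x-y)$ is homogeneous of degree $3$) and $\Lambda^{\ast} dT=\lambda^{-1}dT$ (as $X$ is homogeneous of degree $2$). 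Pushing $c_{\da}$ forward by $\Lambda$ gives a loop of the same class in $F_{\lambda^{3}\da}$, whence $\mathcal{I}(\lambda^{3}\da)=\lambda^{-1}\mathcal{I}(\da)$. This forces $\mathcal{I}(\da)$ to grow like $\da^{-1/3}$, so it has a pole as $\da\to0$ and tends to $0$ as $\da\to\infty$, exactly as stated.

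The delicate point is single-valuedness. As $\calp$ has singular fibres of types $IV$ over $0$ and $IV^{\ast}$ over $\infty$, the loop around $0$ acts on $H_1(F_{\da})$ by a monodromy of order $3$, so the fixed-class period $\mathcal{I}$ is really a branch of a multivalued function --- in agreement with the $\da^{-1/3}$ growth forced above. The qualitative conclusion (pole at $0$, zero at $\infty$) is branch-independent, and should one want a strictly single-valued object it suffices to pass to $\mathcal{I}^{3}$, a meromorphic first integral with a simple pole at $0$ and a zero at $\infty$. I expect this monodromy bookkeeping, together with making precise the conjugacy-invariance invoked in the first step, to be the only genuine obstacles; everything else reduces to the elementary homogeneity identities above.
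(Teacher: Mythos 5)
Your proof is correct and follows the same core route as the paper: the remark is justified there exactly as you do it, by identifying the comparison constant with a ratio of periods and reusing the homothety computation from the end of the proof of Lemma~\ref{nonzero_periods} ($\Lambda^{\ast}X=\lambda X$, $\alpha\circ\Lambda=\lambda^{3}\alpha$, hence $\mathcal{I}(\lambda^{3}\alpha)=\lambda^{-1}\mathcal{I}(\alpha)$), which gives the blow-up of $\mathcal{I}$ at $0$ and its vanishing at infinity; likewise your first paragraph (a holomorphic first integral is a function $g$ of $xy(x-y)$, a unit $g$ rescales the period ratio, and the ratio is a conjugacy invariant up to reparametrisation of the leaf space) is the intended justification of the non-normalisability of $f$. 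Where you go beyond the paper is the monodromy bookkeeping, and this is a genuine and correct sharpening rather than pedantry: since the monodromy around the type~$IV$ fibre acts on $H_{1}$ of the fibre with order~$3$ (it is induced by multiplication by $e^{2\pi i/3}$ on $\mathcal{E}$, as the paper's own quotient model of $N$ shows), no homology class can be fixed globally over $\C^{\ast}$, and indeed the exact homogeneity $\mathcal{I}(\lambda^{3}\alpha)=\lambda^{-1}\mathcal{I}(\alpha)$ forces $|\mathcal{I}|\sim|\alpha|^{-1/3}$, which is incompatible with $\mathcal{I}$ being a single-valued meromorphic function with a pole at $0$ as the remark literally asserts. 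Your resolution --- $\mathcal{I}$ is a branch of a multivalued function whose cube $\mathcal{I}^{3}\propto 1/\alpha$ is the honest single-valued first integral, with simple pole at $0$ and zero at infinity, the qualitative conclusions being branch-independent --- is exactly right, and it is also what the paper implicitly needs a few lines later when it lifts $\mathcal{I}$ to a meromorphic $I$ on $(\C^{2},0)$ with poles along the three invariant lines. In short: same approach as the paper, with a correct repair of an imprecision the paper glosses over.
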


\begin{proof}[Proof of Proposition~\ref{solving-centralizer_elliptic}]
Let then $X_0$ be as above and consider a vector field $X$ obtained by multiplying $X_0$ by an invertible function
$f$ and by a suitable (non-negative) power of $xy(x-y)$. As previously seen, on arbitrarily small neighborhoods
of $(0,0) \in \C^2$, we can find loops $c$ inside leaves $L$ of $\fol$ such that the integrals of $dT_L$ over $c$
become unbounded - in particular the period of $X$ does vary with the leaf (where $dT_L$ stands for the corresponding time-forms).
If there were a vector field $Y$ forming a commutative rank~$2$ system with $X$, the (local) flow of $Y$ would permute
these leaves of $\fol$. Moreover, this local flow preserves $X$ which means that the periods of $X$ would have to be
independent of the leaf chosen. The resulting contradiction proves Proposition~\ref{solving-centralizer_elliptic}
in the case of holomorphic vector fields whose underlying foliation has $xy(x-y)$ as first integral. The proof
of this proposition in the other two (quadratic) cases is however totally analogous and hence left to the reader.
Finally, as already pointed out, the nilpotent cases follow from the quadratic ones.
The proof of Proposition~\ref{solving-centralizer_elliptic} is complete.
\end{proof}

Note that if the vector field $X_0$ is allowed to be multiplied by the above considered function $\mathcal{I}$, then we
obtain vector fields whose periods no longer vary with the fibers. More precisely, $\mathcal{I}$ lifts to a first integral
$I$ of $X_0$ since $\mathcal{I}$ is a function defined on the leaf space of $\fol$. Clearly $I$ must be meromorphic around
the origin since $\mathcal{I}$ has a pole at $0 \in \C$. Up to multiplying $X_0$ by $I$, we obtain a meromorphic vector field
$X$ with poles exactly over the $3$ invariant lines through $(0,0)$. The restrictions of $X$ to the leaves of $\fol$ however
all have the same period. Indeed, the lift of $X$ to the surface $M$ always gives the same vector field on each of the elliptic
fibers.

At this point, we can wonder again about the existence of a (possibly meromorphic) vector field $Y$ forming
a rank~$2$ commutative system with~$X$.
To construct a vector field $Y$ as desired, we first go back to the compact elliptic surface $M$.
The monodromy group of $M$ is cyclic generated by a loop around one of the singular
fibers and the resulting monodromy map is well known (see for example \cite{barthpeters},
page 210). It turns out that the fibers are isomorphic to the elliptic curve $\mathcal{E}$
obtained as the quotient of $\C$ by the group generated by~$1$ and by~$\exp(2i\pi/3)$. An explicit model
for $M$ - or at least for $N = M \setminus \calp^{-1} (\infty )$ - can be obtained as follows.
Consider the product of $\C \times \mathcal{E}$ of $\C$ and the elliptic curve $\mathcal{E}$.
Note that $\mathcal{E}$ viewed as a quotient of $\C$ is stable by multiplication by~$\exp(2i\pi/3)$.
Indeed this multiplication induces an automorphism of $\mathcal{E}$ of order~$3$ having~$3$ fixed
points. Next let $\sigma$ be the diffeomorphism of $\C \times \mathcal{E}$ given by
$(x,y) \mapsto (\exp(2i\pi/3) x , \exp(2i\pi/3) y)$. This diffeomorphism has again order~$3$ and all of its
fixed points lie in the curve $\{ 0 \} \times \mathcal{E}$. Thus the quotient of $\mathbb{D} \times \mathcal{E}$
by the diffeomorphism $\sigma$ is a singular elliptic surface: the singular points are in number
of~$3$ and correspond to the fixed points in $\mathcal{E} \simeq \{ 0 \} \times \mathcal{E}$ of the automorphism
induced by multiplication by~$\exp(2i\pi/3)$. The singular points however can easily be resolved: it suffices
to perform a single blow-up which leads to a smooth surface containing~$4$ rational curves, namely: the three
exceptional divisors (rational curves of self-intersection~$-3$) and a rational curve of self-intersection~$-1$
corresponding to the quotient of $\mathcal{E} \simeq \{ 0 \} \times \mathcal{E}$. Once the latter rational
curve is collapsed, the resulting surface is nothing but the open surface~$N$.

The above construction makes it clear that $N \setminus \calp^{-1} (0)$ is endowed with a natural holomorphic
connection $\nabla$. In fact, the horizontal connection on $\C \times \mathcal{E}$ is preserved by the action
of $(x,y) \mapsto (\exp(2i\pi/3) x , \exp(2i\pi/3) y)$ and hence induces a connection $\nabla$ on the quotient.
To obtain the desired vector field $Y$ we now proceed as follows. Consider on $\C$ identified to the basis of
the elliptic fibration $\calp$ on $N$ the vector field $x \partial /\partial x$. Now using $\nabla$
lift $x \partial /\partial x$ to a holomorphic vector field $Y$ on $N \setminus \calp^{-1} (0)$ and consider its
extension to all of $N$. To show that $[X,Y]=0$ it suffices to check that the flow of $Y$ preserves the
vector field $X$. It is however clear that the flow of $Y$ takes fibers of $\calp$ to fibers of $\calp$
by construction. Thus it pulls-back the restriction of $X$ to a certain fiber to another fiber. The pulled-back
vector field however has the same period than the original vector field on the fiber in question. They must
therefore coincide and this proves that $[X,Y]=0$.

The remainder of this section will be devoted to the centralizer and to the realization problems in the case of
the parabolic vector fields in Table~\ref{TheTable}. Note that the automorphism group of Hirzebruch surfaces
is described for example in \cite{Akhiezer} and again the material discussed below has some intersection
with \cite{JR}.

Let $n \in \N$ be fixed and consider two copies of $\C \times (\C \cup \{ \infty \})$ with coordinates
$(x,y)$ and $(u,v)$ where $y, v \in \C \cup \{ \infty \}$. For $x\neq 0$, let the point
$(x,y)$ of the first copy to be identified with the point $(1/x , y/x^n) = (u,v)$ of the second one. The result of this
gluing in the Hirzebruch surface $F_n$ which can also be described as the (fiberwise) compactification of the
line bundle with Chern class~$-n$ over $\C P(1)$. In particular, $F_0$ is isomorphic to the product $\C P(1) \times
\C P(1)$. Similarly, the surface $F_1$ is not minimal in the sense that it contains a $(-1)$-rational curve: by
collapsing this curve we obtain the projective plane $\C P(2)$. Furthermore, for every $n \in \N$, the parabolic
vector field $X_{n}$ in Table~\ref{TheTable} can be globalized in the surface $F_n$ (cf. \cite{Akhiezer}, \cite{JR}).

Once more it suffices to work with the (quadratic) parabolic vector fields $X_n$ since the nilpotent parabolic
vector field $P$ can be obtained out of $X_1$ by collapsing the $(-1)$-curve in $F_1$. Our purpose will be to
characterize vector fields $Y$ forming a rank~$2$ commutative system with $X_n$ as well as to show that the
corresponding germs of $\C^2$-actions are all realized by globally defined vector fields on $F_n$.

Consider the flow $\Phi^t$ defined on the first copy $\C \times (\C \cup \{ \infty \})$
by $\Phi^t (x,y) = (x+t , y + (x+t)^{n+1} - x^{n+1} )$. Let also $\Psi^s$ be the flow on $\C \times (\C \cup \{ \infty \})$
given by $\Psi^s (x,y) = (x, y+s)$. It is immediate to check that these two flows commute, i.e.
$\Psi^s \circ \Phi^t (x,y) = \Phi^t \circ \Psi^s (x,y)$ for every $t,s \in \C$.

Now it is straightforward to check that in $(u,v)$-coordinates, $\Phi^t$ takes on the form
$$
\Phi^t (u,v) = \left( \frac{u}{1+tu} , (1 +tu)^{-n} \left[ v + \frac{1}{u} [(1+tu)^{n+1} -1] \right] \right) \, .
$$
Similarly, $\Psi^s (u,v) = (u, v+su^n)$. In particular both $\Phi$ and $\Psi$ are global holomorphic flows
on $F_n$. Moreover the above expressions show that the point $p = \{ u=0 , v = \infty \}$ is fixed by both flows so that
the holomorphic vector fields $Z$ and $Y$ arising respectively from $\Phi$ and $\Psi$ have
a singular point at~$p$. Finally the vector fields $Z$ and $Y$ clearly verify $[Z,Y]=0$ since the flows $\Phi$ and $\Psi$
commute. Next note that
in coordinates $\overline{u} = u$ and $\overline{v} =1/v$, the point $p$ is identified with the origin and
a direct verification shows that the vector field $Z$ becomes
$$
Z = \overline{u}^2 \partial / \partial \overline{u} - \overline{v} (n \overline{u} - (n+1) \overline{v})
\partial /\partial \overline{v} \, .
$$
Hence the germ of $Z$ at~$p$ coincides with the vector field $X_n$. In other words, $Z$ is a global
realization of the vector field $X_n$ on the compact surface $F_n$. Furthermore, the vector field $Y$
is given in the coordinates $(\overline{u} , \overline{v})$ by
$$
Y = - \overline{u}^n \overline{v}^2 \partial /\partial \overline{v} \, .
$$
Now recalling Formula~(\ref{forcommutation}) and taking into account that the only (non-constant)
first integral of the vector field $X_n$ is strictly meromorphic, what precedes can be summarized as follows:

\begin{prop}
\label{solving-centralizer_parabolic}
Let $X_n$ be a parabolic (quadratic) vector field as in Table~\ref{TheTable}. Then every vector field
$Y$ forming a rank~$2$ system of commuting vector fields with $X$ has the form
$$
Y = c_1 \overline{u}^n \overline{v}^2 \partial /\partial \overline{v} + c_2 X_n
$$
with $c_1, c_2 \in \C$. Furthermore, all the corresponding germs of $\C^2$-actions can be realized
by global vector fields on the compact surface $F_n$.\qed
\end{prop}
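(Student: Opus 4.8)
The plan is to combine the two global commuting flows just constructed with the general description of centralizers furnished by Formula~(\ref{forcommutation}). For brevity set $x = \overline{u}$ and $y = \overline{v}$, so that near the fixed point $p$ one has $X_n = x^2 \ddx - y(nx - (n+1)y)\ddy$, while the vector field $W$ arising from the flow $\Psi$ is $W = - x^n y^2 \ddy$. First I would record that $X_n$ and $W$ are not everywhere parallel: their wedge product is a nonzero multiple of $x^{n+2} y^2 \, \ddx \wedge \ddy$, hence generically nonzero, so $\{X_n, W\}$ is a rank~$2$ system, and $[X_n, W] = 0$ because $\Phi$ and $\Psi$ commute. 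By the discussion surrounding~(\ref{veryelementary}) and~(\ref{forcommutation}), every holomorphic vector field $Y$ commuting with $X_n$ can then be written as $Y = f X_n + g W$, where $f$ and $g$ are a priori meromorphic first integrals of $X_n$.

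The heart of the argument, and the step I expect to be the main obstacle, is to upgrade this to the assertion that $f$ and $g$ are \emph{constants}, exploiting that the only non-constant first integral of $X_n$ is strictly meromorphic (cf. Table~\ref{TheTable} and Remark~\ref{About_The_Table}). Concretely, integrating the foliation associated with $X_n$ produces the primitive first integral $H = (x-y)/(x^{n+1} y)$, and since the generic leaf $\{H = \mathrm{const}\}$ is irreducible, every meromorphic first integral is a rational function $R(H)$ of $H$. Reading off the components of $Y = f X_n + g W$, the $\ddx$-component equals $f x^2$ and must be holomorphic; the finite poles of $f = R(H)$ lie along generic leaves (where neither $x^2$ nor $W$ vanishes), so $R$ can have no finite pole, while a pole at infinity of $R$ would force a pole of $f$ along $\{y = 0\}$ of order $\deg R$, which $x^2$ cannot absorb. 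Hence $f$ is constant. Feeding $f \in \C$ back into the $\ddy$-component, holomorphy of $g \, x^n y^2$ and the same bookkeeping force $g$ constant as well; here the delicate point is that the competing pole orders of $g$ along $\{x = 0\}$ (where $H \sim -x^{-(n+1)}$) and along $\{y = 0\}$ (where $H \sim x^{-n}y^{-1}$) must both be dominated by the vanishing orders of $x^n y^2$, and the constraint at $\{x = 0\}$ (namely $(n+1)\deg R \le n$) already leaves no room beyond the constant case.

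This yields $Y = c_2 X_n + c_1 x^n y^2 \ddy$ with $c_1, c_2 \in \C$, which is exactly the stated normal form in the $\overline{u}, \overline{v}$ coordinates (up to the harmless sign of $W$). For the realization statement I would simply observe that both $X_n$ and $W$ are, by construction, the germs at $p$ of globally defined holomorphic vector fields on $F_n$ induced by the commuting flows $\Phi$ and $\Psi$; consequently every constant combination $c_1 W + c_2 X_n$ is again the germ of such a global field, and the pair $(X_n, Y)$ generates the same maximal local $\C^2$-action as $(X_n, W)$, which is globally realized on $F_n$. Finally, the nilpotent parabolic field is recovered from the case $n = 1$ by collapsing the $(-1)$-curve of $F_1$ to obtain $\C P(2)$, so its $\C^2$-action is realized there as well.
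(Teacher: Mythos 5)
Your proposal is correct and takes essentially the same route as the paper: realize $X_n$ together with $W = -\overline{u}^{\,n}\overline{v}^{\,2}\,\partial/\partial \overline{v}$ as germs at $p$ of the commuting global flows $\Phi$ and $\Psi$ on $F_n$, write any commuting $Y$ as $fX_n + gW$ with $f,g$ first integrals of $X_n$ via Formula~(\ref{forcommutation}), and then use the fact that the only non-constant first integrals are strictly meromorphic to force $f$ and $g$ to be constant. The only difference is that you make explicit the pole-order bookkeeping (no finite poles of $R$, pole order along $\{y=0\}$ not absorbed by $x^2$, and the constraint $(n+1)\deg R \le n$ along $\{x=0\}$), which the paper compresses into the single assertion that holomorphy of $fX_n+gW$ forces $f,g$ constant.
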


\begin{obs}
\label{thenilpotent_vectorfield_P}
{\rm As previously mentioned, the case of the parabolic nilpotent vector field $P$ can be derived from
Proposition~\ref{solving-centralizer_parabolic} when $n=1$. Indeed, every holomorphic vector field
forming a rank~$2$ system of commuting vector fields with $P$ has the form
$$
c_1 y(x \partial /\partial x + y \partial /\partial y) + c_2 P
$$
with $c_1, c_2 \in \C$. Furthermore, all the corresponding germs of $\C^2$-actions can be realized
by global vector fields on $\C P(2)$.}
\end{obs}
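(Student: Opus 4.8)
The plan is to reduce everything to Proposition~\ref{solving-centralizer_parabolic} with $n=1$ through the blow-up that links the nilpotent vector field $P = (y-2x^2)\ddx - 2xy\ddy$ to the quadratic parabolic vector field $X_1 = x^2\ddx - y(x-2y)\ddy$, as recorded in Remark~\ref{About_The_Table}. First I would make this link explicit at the level of vector fields rather than foliations: blowing up the origin via $\pi(x,t) = (x,xt)$, the natural lift of $P$ is the holomorphic vector field $x(t-2x)\ddx - t^2\,\partial /\partial t$, and a linear change of coordinates identifies it with $X_1$. Since $P$ vanishes at the blown-up point, its lift $\widetilde P$, so identified with $X_1$, is tangent to the exceptional divisor $E$.

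For the centralizer assertion, let $W$ be a germ forming a rank~$2$ commuting system with $P$. Its lift $\widetilde W$ (the unique holomorphic field with $\pi_\ast\widetilde W = W$) is holomorphic across $E$, and since $\pi$ is a biholomorphism off $E$ we get $[\widetilde W, X_1]=0$ with $\widetilde W$ generically independent of $X_1$. Proposition~\ref{solving-centralizer_parabolic} then yields
\[
\widetilde W = c_1\, \overline u\,\overline v^2\,\partial /\partial \overline v + c_2\, X_1 ,
\]
in the coordinates of that proposition; write $Y_1 = \overline u\,\overline v^2\,\partial /\partial \overline v$. To push this back down I would verify that $Y_1$ is tangent to $E$. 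For this, note that $R := y(x\ddx + y\ddy)$ commutes with $P$ (a one-line bracket computation) and is independent of $P$, since $\det(P,R) = y^3$; hence its lift $\widetilde R$ is tangent to $E$ and, by the proposition again, $\widetilde R = \alpha Y_1 + \beta X_1$ with $\alpha \neq 0$ (as $\widetilde R$ is not proportional to $X_1$). As $\widetilde R$ and $X_1$ are tangent to $E$, so is $Y_1$. Therefore the whole centralizer of $X_1$ descends under $\pi$, and blowing $\widetilde W$ down expresses $W$ as a $\C$-combination of $\pi_\ast Y_1$ and $P$; since $\pi_\ast Y_1 \in \mathrm{span}_{\C}(R,P)$, we obtain $W = c_1\, y(x\ddx + y\ddy) + c_2\, P$ for suitable $c_1,c_2\in\C$, as claimed.

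For the realization statement, recall that $X_1$ and $Y_1$ are globalized on the Hirzebruch surface $F_1$ by Proposition~\ref{solving-centralizer_parabolic}. The blow-down $F_1 \to \mcpp$ collapses the unique $(-1)$-curve $C$ of $F_1$; as every automorphism of $F_1$ must preserve $C$, the global flows of $X_1$ and $Y_1$ leave $C$ invariant, so both fields are tangent to $C$ and descend to global holomorphic vector fields on $\mcpp$. Their germs at the image of $C$ are $P$ and $\pi_\ast Y_1 \in \mathrm{span}_{\C}(R,P)$, and as these span $\mathrm{span}_{\C}(R,P)$, every $c_1\, y(x\ddx + y\ddy) + c_2\, P$ is the germ of a globally defined vector field on $\mcpp$.

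The hard part is the tangency bookkeeping along $E$ (and, globally, along $C$): while the lift of a $P$-centralizing germ is automatically holomorphic and commuting, inverting the correspondence requires knowing that the entire centralizer of $X_1$ furnished by Proposition~\ref{solving-centralizer_parabolic} is tangent to $E$, which is precisely what the observation $\alpha \neq 0$ secures. Once this is established, $\pi$ induces a $\C$-linear isomorphism between the centralizers of $P$ and of $X_1$, and both the classification and the realization follow at once.
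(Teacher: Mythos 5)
Your proposal is correct and follows essentially the same route the paper intends for this remark: identify the blown-up germ of $P$ with the germ of $X_1$ at $p\in F_1$, transport the centralizer description of Proposition~\ref{solving-centralizer_parabolic} with $n=1$ through the blow-up/blow-down, and globalize on $\C P(2)$ by collapsing the $(-1)$-curve of $F_1$. The only step you assert without justification is that the lift $\widetilde W$ is holomorphic across $E$; this requires $W(0,0)=0$, which does hold because the local flow of $W$ preserves the zero set of $P$ and the singular point of $P$ is isolated.
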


\section{On the invertible functions in Table~\ref{TheTable} and the centralizer problem}\label{invertiblefunctions_etc}

In this section we will further advance the proof Main Theorem by essentially settling the cases
of vector fields whose underlying foliation {\it is not}\, of Martinet-Ramis type.

We begin by considering vector fields in Table~\ref{TheTable} which admit a {\it strictly meromorphic}\, first
integral, namely the parabolic vector fields in items~2 and~3 as well as vector fields of the form
\begin{equation}
x f [x\ddx + ny \ddy ] \label{dicritical_sing}
\end{equation}
where $n \in \N^{\ast}$; cf. item~11 in Table~\ref{TheTable}. Proposition~\ref{non_invertible_function} below
consists of a slight improvement on the content of Table~\ref{TheTable} itself.

\begin{prop}
\label{non_invertible_function}
Let $X$ be a holomorphic semicomplete vector field defined on a neighborhood of $(0,0) \in \C^2$. Assume
that both eigenvalues of $X$ at the origin are equal to zero. Assume also that $X$ possesses a strictly meromorphic
first integral. Then $X$ possesses one of the following normal forms:
\begin{enumerate}
  \item $x [x\ddx + ny \ddy ]$ with $n \in \N^{\ast}$;
  \item $x^2\ddx - y (nx-(n+1)y)\ddy$ with $n\in \mathbb{N}$;
  \item $(y-2x^2) \ddx - 2xy \ddy$.
\end{enumerate}
In other words, the invertible multiplicative function appearing in front of the models~2, 3, and~11 - this last
case only if $n >0$ - in Table~\ref{TheTable} can be made constant.
\end{prop}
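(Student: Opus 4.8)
Let me restate what Proposition~\ref{non_invertible_function} claims. We have a semicomplete holomorphic vector field $X$ on $(\mathbb{C}^2, 0)$ with both eigenvalues zero at the origin, and $X$ has a strictly meromorphic first integral. The conclusion is that $X$ is (up to coordinates) one of three specific normal forms — the dicritical singularity $x(x\partial_x + ny\partial_y)$, the parabolic quadratic $x^2\partial_x - y(nx-(n+1)y)\partial_y$, or the nilpotent $(y-2x^2)\partial_x - 2xy\partial_y$ — with NO invertible multiplicative function $f$ in front.

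So the real content is: the $f$ that appears in items 2, 3, 11 of Table~\ref{TheTable} can be normalized to a constant (which can then be absorbed by rescaling, yielding the clean forms above). The starting point is Table~\ref{TheTable} itself: since $X$ is semicomplete with zero eigenvalues and possesses a *strictly meromorphic* first integral, it must already be one of items 2, 3, or 11 (with $n>0$) — the only entries having strictly meromorphic first integrals. Let me think about how to kill the $f$.

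**How I'd organize this.**

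Now let me think. The key structural fact is in Equation~(\ref{forcommutation}) / the discussion at the start of Section~\ref{morelikebasics}: multiplying a vector field $X_0$ by a first integral $f$ of $X_0$ produces another vector field commuting with $X_0$. Here the relationship is reversed — we want to show a given $f$ in front is removable — but the same mechanism governs it. The vector field $X = f \cdot X_0$ (where $X_0$ is the clean model) is semicomplete, and $f$ is a unit. The question is whether $f$ is forced to be constant along... no, $f$ is NOT a first integral in general.

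Let me reconsider the actual approach. I think the right tool is the time-form criterion from \cite{JR1}: on a leaf $L$, $\int_c dT_L \neq 0$ for open paths. When we replace $X_0$ by $fX_0$, the time-form gets multiplied by $1/f$. So the semicompleteness constraint becomes a constraint on $\int_c (1/f)\, dT_L^{(0)}$. The strategy is: use the explicit strictly-meromorphic first integral to parametrize leaves, compute the period/residue integrals of $dT_L = (1/f)\,dT_L^{(0)}$, and show that semicompleteness forces $f$ to be constant (equivalently, forces the residue contributions from $f$ to vanish leaf-by-leaf, pinning down $f$).

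**My proof plan.** First I would invoke Table~\ref{TheTable} to reduce to the three candidate normal forms $fX_0$ where $X_0$ is the listed model and $f$ is a unit. I would handle the three cases in parallel, the mechanism being the same. For each, I would pick a generic leaf $L$ of the underlying foliation, which — because the first integral is strictly meromorphic — is a (punctured) rational or transcendental curve admitting an explicit rational parametrization coming from the known integral. On $L$ the time-form of $fX_0$ is $(1/f)\,dT^{(0)}_L$. I would then exploit the semicomplete criterion in two ways: (i) closed loops must give zero period, and (ii) no open path can integrate to zero. Expanding $1/f$ as a unit times $dT^{(0)}_L$ and integrating along carefully chosen cycles around the invariant curves, the resulting period/residue conditions become polynomial/analytic constraints on the Taylor coefficients of $f$. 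I would show these force all non-constant coefficients of $f$ (restricted appropriately) to vanish, so that $f$ descends to a constant; rescaling $X$ then removes it. The cleanest formulation is probably via Lemma~\ref{one-dimensional_cases}: restricting $dT_L$ to the invariant axis reduces to a one-dimensional vector field $h(z)\partial_z$, whose residue must vanish, and this residue is precisely the obstruction detecting the non-constant part of $f$.

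**The main obstacle.** The hard part will be case (3), the nilpotent vector field $(y - 2x^2)\partial_x - 2xy\partial_y$, and more generally controlling how the unit $f$ interacts with the parametrization of leaves near the singular fiber. Unlike the dicritical case, where the foliation is linearizable and leaves are transparent, the parabolic and nilpotent cases have degenerate separatrix structure, so I expect to blow up (as is done for the nilpotent $P$ elsewhere in the paper, e.g. in the proof of Lemma~\ref{commuting_with_regularfoliation}) to reach a quadratic model where the leaves are accessible, and then track $f$ through the blow-up. The genuine difficulty is verifying that the semicompleteness integral conditions are \emph{enough} to pin down every coefficient of $f$ and not merely finitely many low-order ones; I would expect to need an inductive argument on the order of the Taylor expansion of $f$ along a well-chosen transversal, where at each order a residue/period computation forces the next coefficient to vanish.
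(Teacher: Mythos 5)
There is a genuine gap, and it lies at the very core of your plan. You propose to show that the semicompleteness constraints (zero residues, non-vanishing of integrals over open paths) ``force all non-constant coefficients of $f$ to vanish, so that $f$ descends to a constant.'' But that statement is false: the proposition does not claim that $f$ is constant in the \emph{given} coordinates, only that there \emph{exist} coordinates in which it becomes constant, and these are very different assertions. Indeed, if $\phi$ is any local diffeomorphism preserving the foliation of the model $Z$ (for instance $\phi(x,y)=(x(1+x),y(1+x))$, which maps lines through the origin to lines through the origin in the dicritical case $Z=x(x\ddx+y\ddy)$), then $\phi_\ast Z = fZ$ for some unit $f$ that is generally non-constant, and $\phi_\ast Z$ is automatically semicomplete. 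So the set of units $f$ for which $fZ$ is semicomplete is far larger than the constants, and no residue or period computation can pin the Taylor coefficients of $f$ down to a constant. What those computations \emph{do} give --- and this is exactly where the paper also starts --- is that the restriction of $X=fZ$ to each leaf $L$ (every leaf is a separatrix, since the first integral is strictly meromorphic) has zero residue, hence is conjugate to $z^2\partial/\partial z$, hence is conjugate to the restriction of $Z$ to $L$. This is a leaf-by-leaf statement and is only the first half of the argument.

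The essential missing idea is the second half: one must \emph{glue} these leafwise conjugacies (each defined via the semi-global flows $\Phi_X$ and $\Phi_Z$ relative to a choice of base point on the leaf) into a single ambient holomorphic diffeomorphism conjugating $X$ and $Z$ near the origin. This is where all the real work in the paper lies: after blowing up to make the leaf space visible (the exceptional divisor $\C P(1)$ in the radial case), there is no holomorphic section of the normal bundle because of negative self-intersection, so the paper uses Grauert's tubular neighborhood theorem together with a sequence of \emph{meromorphic} sections with a pole at one point $p_N$, deformed by the flow of $X$ so their graphs stay in the fixed neighborhood, to build the conjugacy over an exhaustion of $\pi^{-1}(0)\setminus\{p_N\}$; the map is then extended across the deleted leaf and across $p_N$ by Riemann's extension theorem. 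The parabolic quadratic case is reduced to this one after a blow-up (its blown-up foliation has one radial-type point and two linearizable Siegel points, and the conjugacy propagates by saturation plus Riemann extension), and the nilpotent case (3) --- which you flagged as the hardest --- is actually the easiest, being obtained by blowing \emph{down} the quadratic case $n=1$. Without a gluing argument of this kind your plan cannot produce the required change of coordinates, and the statement you would be trying to prove instead (``$f$ is constant as given'') is simply not true.
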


\begin{proof}
Let $X$ be a holomorphic semicomplete vector field as in the statement. We can assume once and for all that $X$
has the form $X =fZ$ where $f$ is an invertible function and where $Z$ is a vector field belonging to the above
indicated list ((1), (2), and (3)) of vector fields.
In particular, both $X$ and $Z$ share the same associated foliation which will be denoted by $\fol$.
The proof of the proposition consists of finding coordinates where the function $f$ becomes constant.

Recall that a {\it separatrix}\, for $\fol$ is an irreducible analytic curve $\mathcal{C}$ passing through
the origin which is invariant under $\fol$. In other words, $\mathcal{C} \setminus \{ (0,0) \}$ is a leaf
of $\fol$. Next note that all the leaves of $\fol$ actually define separatrices for this foliation as it easily
follows from the fact that $\fol$ possesses a {\it strictly meromorphic}\, first integral.

Let then $L$ be a leaf of $\fol$ and consider the restriction of $X$ to $L$. In a local coordinate $z$ along $L$
obtained from a suitable Puiseux parameterization, the restriction of $X$ to $L$ has the form $(z^2 + {\rm h.o.t.})
\partial /\partial z$. Since this restriction must be semicomplete, there follows from Lemma~\ref{one-dimensional_cases}
that the residue of $X$ around $z=0$ is zero and hence that this vector field is conjugate to $z^2 \partial /\partial z$.
Since the same applies to the restriction of $Z$ to $L$, we conclude that the restrictions of $X$ and $Z$ to
a same leaf $L$ of $\fol$ are always conjugate. Thus to prove the proposition, we only need to check that it is possible
to coherently patch together these ``foliated'' change of coordinates so as to produce an actual change of coordinates
on a neighborhood of $(0,0) \in \C^2$.

To construct the desired change of coordinate out of the fact that the restrictions of $X$ and $Z$
to a same leaf $L$ of $\fol$ are conjugate, we proceed as follows. Consider first the case where $Z$ is as in item~(1)
of the list in the statement, with $n =1$.
In other words, $Z = x (x \ddx + y \ddy)$ and $\fol$ is given by the vector field
$x \ddx + y \ddy$ whose leaves are radial lines through the origin. Denote by $\tilf$ the blow-up of $\fol$
at the origin and let $\pi^{-1} (0) \simeq \C P(1)$ be the exceptional divisor. The leaves of $\tilf$
are transverse to $\pi^{-1} (0)$ so that the space of these leaves is naturally identified to $\C P(1)$.
There is however no holomorphic section defined on the leaf space and taking values in a neighborhood of the exceptional
divisor since the self-intersection of $\pi^{-1} (0)$ is strictly negative
(equal to~$-1$ in this case).
On the other hand, the fact that the self-intersection is strictly negative
allows us to apply the holomorphic tubular neighborhood theorem of Grauert \cite{grauert} and thus we can identify
a neighborhood of $\pi^{-1} (0)$ with a neighborhood $U$ of the null section in the corresponding normal line bundle.

Note that if there were a section $\sigma_1$ of the mentioned line bundle (with values on $U$), then
the desired change of coordinates would be obtained as follows: for every $p \in \pi^{-1} (0) \simeq \C P(1)$,
consider the point $\sigma_1 (p) \in L$ and construct a holomorphic diffeomorphism between the restrictions
of $X$ and of $Z$ to $L$ by means of the semi-global flows $\Phi_X$ and $\Phi_Z$
of these ($1$-dimensional) vector fields.
More precisely, since both restrictions are conjugate to the $1$-dimensional vector field $z^2 \partial /\partial z$,
for every $q \in L \setminus \pi^{-1} (0)$, there is a unique $T_{q,X} \in \C$ (resp. $T_{q,Z} \in \C$)
such that $\Phi_X^{T_{q,X}} (\sigma_1 (p)) =q$ (resp. $\Phi_Z^{T_{q,Z}} (\sigma_1 (p)) =q$). A diffeomorphism
$h_L$ of $L$ conjugating the
restrictions of $X$ and of $Z$ to $L$ can then be obtained by setting $q \mapsto \Phi_Z^{T_{q,X}} \circ
\Phi_X^{-T_{q,X}} (q)$. Since $\sigma_1$ is holomorphic, these ``foliated'' diffeomorphisms glue together on
a diffeomorphism defined on $U \setminus \pi^{-1} (0)$ which would then extend to all of $U$ owing to Riemann's theorem.

Whereas no holomorphic section $\sigma_1$ as above exists, the preceding idea can be adapted by using
a suitable sequence of meromorphic sections. We begin by fixing a point $p_N \in \pi^{-1} (0)$.
The leaf of $\tilf$ through $p_N$ will be denoted by $L_{p_N}$. Next let
$V_1$ be a relatively compact disc contained in $\pi^{-1} (0) \setminus \{ p_N \}$. On the open set $V_1$
we can find a holomorphic section $\sigma_{1}$ as above. Indeed, $\sigma_1$ can be the restriction of a meromorphic section
of the normal bundle having poles only at $p_N$. By using $\sigma_1$ as indicated in the preceding paragraph, we
then construct a diffeomorphism $h_1$ defined on the subset of $U$ lying over $V_1$. To be more
precise, for every $q$ in the open
set in question, the restriction of $h_1$ to the leaf of $\tilf$ through~$q$ is given by
$q \mapsto \Phi_Z^{T_{q}} \circ \Phi_X^{-T_{q}} (q)$, where $T_q$ is the unique complex time for which
$\Phi_X^{T_q} (\sigma_1 (p)) = q$.

It is however clear that the image of the (globally defined meromorphic) section $\sigma_1$
will eventually leave the neighborhood $U$ of $\pi^{-1} (0)$ fixed in the beginning if the open set $V_1$ is
sufficiently enlarged. Thus we cannot immediately
guarantee that $h_1$ is defined on $U \setminus L_{p_N}$. To overcome this difficulty, consider then another
relatively compact disc $V_2 \subset \pi^{-1} (0) \setminus \{ p_N \}$ containing $V_1$. Assume also that the
graph of $\sigma_1$ over $V_2$ is not contained in $U$, otherwise there is nothing to be proved.

Now $\sigma_1$ can be deformed into another section $\sigma_2$ by using the flow of $X$, i.e. by considering
sections of the form $\Phi_X^T \circ \sigma_1$ which are still (global) meromorphic sections with poles only at
$p_N$. Here a comment is needed since the image of $\sigma_1$ may not be contained in the domain of definition of~$X$.
The simplest way to overcome this difficulty consists of defining sections over $V_1$
under the form $\Phi_X^T \circ \sigma_1$ and then observing that these sections can naturally be extended to global
meromorphic sections as desired. To check that these ``local'' sections can be extended into meromorphic ones,
note that  the leaves of $\tilf$ can be identified with open discs in the fibers of the normal bundle in question.
The vector field $z^2 \partial /\partial z$ is however globally defined in the corresponding fibers and hence
can be used to move the section $\sigma_1$. Finally since this vector field is conjugate to $X$ over $V_1$,
there is no essential difference between deforming $\sigma_1$ by $\Phi_X^T \circ \sigma_1$ or by the flow
of $z^2 \partial /\partial z$.

In view of the preceding, by a small abuse of notation, we will keep talking about global meromorphic sections of the form
$\Phi_X^T \circ \sigma_1$ in what follows. As mentioned, these sections are holomorphic away from $p_N$.
Furthermore,
if $T$ is suitably chosen to ``bring the graph of $\sigma_1$ closer to the null section'', then the graph
of $\sigma_2$ over the disc $V_2$ will still be contained in $U$. Let then $T_{12}$ stand for the
complex time chosen in the definition of $\sigma_2$, i.e. $T_{12}$ is such that $\sigma_2 = \Phi_X^{T_{12}} \circ \sigma_1$.
In particular, for every point $p \in V_1$, we can consider the points $\sigma_2 (p) =
\Phi_X^{T_{12}} \circ \sigma_1 (p)$ and $p^{\ast} = \Phi_Z^{T_{12}} \circ \sigma_1 (p)$. These two points
are related by $h_1$, more precisely we have $h_1 (\sigma_2 (p)) = p^{\ast}$. However we will need to consider the points
$\sigma_2 (p)$ and $p^{\ast}$ for every $p \in V_2$ (and not only in $V_1$). The difficulty here is analogous to
the difficulty in defining meromorphic sections of the form $\Phi_X^T \circ \sigma_1$ already mentioned above. However, considering
that $\sigma_2$ is already defined, the point $p^{\ast}$ can simply be taken as
$p^{\ast} = \Phi_Z^{T_{12}} \circ \Phi_X^{-T_{12}} \circ \sigma_2 (p)$.

By using $\sigma_2$ and the above construction, the next step is to
define an extension $h_2$ of $h_1$ to the subset of $U$ lying over
$V_2$. For this, consider a leaf of $\tilf$ through a point $p$ (identified with the leaf space)
in $V_2$ and consider a point $q$ in this leaf.
Denoting by $T_{q}^{(2)}$ the time at which the flow of $X$ takes $\sigma_2 (p)$ to $q$, a diffeomorphism
conjugating the restrictions of $X$ and $Z$ to the leaf in question is obtained by setting
$q \mapsto \Phi_Z^{T_{q}^{(2)}} (p^{\ast})$. This collection of diffeomorphisms defined on individual leaves gives rise
to a diffeomorphism $h_2$ defined on the subset of $U$ lying over $V_2$. Finally, to check that
$h_2$ is an extension of $h_1$ consider $p \in V_1$. With the above notation, the time $T_q^{(2)}$
satisfies then $T_q^{(2)} = T_q - T_{12}$. Hence
$$
\Phi_Z^{T_{q}^{(2)}} (p^{\ast}) = \Phi_Z^{T_{q}} \circ \Phi_Z^{-T_{12}} (p^{\ast}) = \Phi_Z^{T_{q}} \circ \sigma_1 (p)= h_1 (q) \, .
$$
Therefore $h_2$ coincides with $h_1$ over $V_1$ as desired.

By considering an exhaustion of $\pi^{-1} (0) \setminus \{ p_N \}$ by open discs $V_k$ as above, we can
define a diffeomorphism $h$ conjugating $X$ and $Z$ in $U \setminus L_{p_N}$. This diffeomorphism however
is clearly bounded on neighborhoods of points in $L_{p_N} \setminus \{ p_N \}$ since it is preserves the leaves
of the common foliation $\fol$ and, in restriction to these leaves, sends $X$ to $Z$. Thus Riemann's theorem
implies that $h$ has a holomorphic extension to $L_{p_N} \setminus \{ p_N \}$. Finally it also extends
to $p_N$ since $p_N$ is an isolated point in the $2$-dimensional ambient space. This completes
the proof of the proposition in the case where $\fol$ is given by the vector field $x \ddx + y \ddy$.

The preceding proof can however be extended to the remaining cases. Assume first that the foliation
$\fol$ is given by $x\ddx + ny \ddy$ ($n \geq 2$). After performing finitely many blow-ups, we arrive
to a radial singularity of the form $x \ddx + y \ddy$ and we can then construct the
desired diffeomorphism on a neighborhood of the singular point in question. This diffeomorphism will naturally
be defined on the complement of finitely many leaves (separatrices) for the blown-up foliation. In fact,
apart from the radial singularity in question, all singularities of the blown-up foliation are linear
and of Siegel type. Hence the saturated of the radial singularity fills all of $U$ bar finitely many leaves
arising as separatrices for the Siegel singularities in question. As above, these diffeomorphisms are bounded
on neighborhoods of points in these separatrices that are not contained in the exceptional divisor. Thus Riemann's
theorem allows us to extend the diffeomorphism to each punctured separatrix and then to the whole space.
This shows the existence of a diffeomorphism
conjugating $X$ and $Z$ for foliations of the form $x\ddx + ny \ddy$ ($n \geq 2$).

Consider now the case where the foliation $\fol$ is given by $x^2\ddx - y (nx-(n+1)y)\ddy$ ($n\in \mathbb{N}$).
Denote by $\tilf$ the blow-up of $\fol$ at the origin and note that $\tilf$ leaves invariant the exceptional
divisor $\pi^{-1} (0)$. Moreover $\tilf$ possesses exactly $3$ singular points, namely:
\begin{itemize}
  \item A singularity $p_1$ where $\tilf$ admits $x\ddx + (n+1) y \ddy$ as local model.

  \item Two linearizable singularities $p_2$ and $p_3$ of Siegel type. Furthermore the eigenvalues of $\tilf$
  at $p_2$ are $1$ and $-1$ whereas the eigenvalues of $\tilf$ at $p_3$ are $1$ and $-(n+1)$.
\end{itemize}

The previous results allow us to construct the diffeomorphism $h$ conjugating $X$ and $Y$ on a neighborhood
of the singular point $p_1$. However the saturated of this neighborhood by $\tilf$ covers a neighborhood of
the exceptional divisor bar the separatrices transverse to $\pi^{-1} (0)$ of $\tilf$ at $p_2$ and $p_3$. As previously
indicated, Riemann's theorem can then be used to extend $h$ to the two separatrices in question. Finally,
the remaining case of the nilpotent vector field (item~(3) in the statement) follows immediately since it can be
obtained as the blow-down of the vector field $x^2\ddx - y ( nx-(n+1)y)\ddy$ with $n=1$. The proof
of Proposition~\ref{non_invertible_function} is finished.
\end{proof}

At this point the progress made so far in the centralizer problem can be summarized by reducing the proof
of Main Theorem to the proof of Theorem~\ref{harder_cases} below:

\begin{teo}
\label{harder_cases}
Assume that $X, \, Y$ form a rank~$2$ system of commuting semicomplete holomorphic
vector fields on $(\C^2, 0)$. Assume also that the eigenvalues of both $X$ and $Y$ at the origin are zero.
Then we have:
\begin{itemize}
  \item[(A)] If the foliation associated with one of these vector fields is regular. Then, up to linear combination,
  there are coordinates where the pair $X$ and $Y$ takes on one of the following forms:
  \begin{itemize}
    \item[$\bullet$] $X = y^n \ddx$ and $Y = y (a(x,y) \ddx + b(y) \ddy)$ where $b(y) = \alpha y + {\rm h.o.t.}$ with
    $\alpha \in \C^{\ast}$ and $a(x,y) =1 + \alpha n x +xr(y) +s(y)$ where $r, s$ are holomorphic functions
    satisfying $r(0) =s(0) =0$;

    \item[$\bullet$] $X = x^2 \ddx$ and $Y = y^2 \ddy$;

    \item[$\bullet$] $X = y^n \ddx$ and $Y = y(nx \ddx +y\ddy)$, with $n \in \N^{\ast}$;

    \item[$\bullet$]
    $$
    X = g_1 (xy^n) y^nx^2 \left[ \frac{\partial}{\partial x} + y^{n+1} g_2 (xy^n)  \left( nx \frac{\partial}{\partial x}
    - y \frac{\partial}{\partial y} \right) \right]
    $$
    and $Y = y(nx \ddx +y\ddy)$, where $n \in \N^{\ast}$ and $g_1$, $g_2$ are holomorphic functions of a single variable
    with $g_1 (0) =1$ and $g_1'(0) = g_2 (0) =0$;

    \item[$\bullet$] $X = y^n x^2 \ddx$ and $Y = x (ny-(n+1)x)\ddx - y^2\ddy$, with $n\in \mathbb{N}$.
  \end{itemize}

  \item[(B)] If the foliations associated with both $X$ and $Y$ are of Martinet-Ramis type. Then, up to linear combination,
  there are coordinates where the pair $X$ and $Y$ takes on the following form:
  $$
  X = (x^n y^m)^{k_1} x^{a_{\mu}} y^{b_{\mu}} [mx \partial /\partial x - ny  \partial /\partial y] \, ,
  $$
  $k_1 \in \N$, where $a_{\mu}, b_{\mu}$ are non-negative integers satisfying $a_{\mu} m - b_{\mu} n = \pm 1$ and such that
  the function $(x,y) \mapsto x^{a_{\mu}} y^{b_{\mu}} / x^ny^m$ is strictly meromorphic. In turn, up to a multiplicative
  function $u_1 (x^ny^m)$, the vector field $Y$ has the form
  $$
  Y = (x^n y^m)^{k_1} x^{a_{\mu}} y^{b_{\mu}}
  [mx \partial /\partial x - ny  \partial /\partial y + x^{-a_{\mu}} y^{-b_{\mu}} u_2 (x^ny^m) [b x \ddx - ay \ddy]]
  $$
  with $u_1, u_2$ holomorphic functions of a single variable satisfying the following conditions: $u_1(0) =1$ and the
  $(x,y) \mapsto x^{-a_{\mu}} y^{-b_{\mu}} u_2 (x^ny^m)$ is holomorphic of order at least~$1$ at the origin.

\end{itemize}
\end{teo}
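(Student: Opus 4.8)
The plan is to treat the two alternatives separately, in both cases combining the commutation identity~(\ref{forcommutation}) with the one-dimensional semicompleteness constraints of Lemma~\ref{one-dimensional_cases}. For part~(A) I would first straighten the regular foliation associated with $X$, choosing coordinates $(x,y)$ in which its leaves are the horizontal lines $\{y={\rm const}\}$; thus $X = g(x,y)\ddx$ with $g(0,0)=\partial g/\partial x(0,0)=0$, the latter because both eigenvalues vanish. Writing $Y = P\ddx + Q\ddy$, the $\ddy$-component of $[X,Y]$ equals $g\,\partial Q/\partial x$, so that $[X,Y]=0$ forces $Q=Q(y)$. Hence $Y = P(x,y)\ddx + Q(y)\ddy$ is exactly of the shape required by Lemma~\ref{commuting_with_regularfoliation}, which I would invoke to conclude that either $\fol_Y$ is regular as well, or $Y$ coincides, up to a unit, with one of the models $yh(y)[nx\ddx+y\ddy]$ or $h(y)[x(ny-(n+1)x)\ddx+y^2\ddy]$.

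This produces three scenarios. When both foliations are regular I would distinguish whether $\fol_X$ and $\fol_Y$ are transverse or tangent at the origin. In the transverse case, normalizing each restriction of $X$ (resp.\ $Y$) to a leaf to $z^2\partial/\partial z$ through Lemma~\ref{one-dimensional_cases} and then straightening the two transverse foliations simultaneously yields $X=x^2\ddx$ and $Y=y^2\ddy$; the tangent case yields $X=y^n\ddx$ together with $Y=y(a(x,y)\ddx+b(y)\ddy)$. When $Y$ is one of the two nonregular models the only remaining unknown is $g$, governed by the scalar equation $g\,\partial P/\partial x = P\,\partial g/\partial x + Q\,\partial g/\partial y$ coming from the $\ddx$-component of $[X,Y]=0$; solving this equation and imposing that $X$ be semicomplete along the leaves pins $g$ down and leads respectively to the models with $Y=y(nx\ddx+y\ddy)$ (namely $X=y^n\ddx$, or the family involving $g_1,g_2$) and to the pair $X=y^nx^2\ddx$, $Y=x(ny-(n+1)x)\ddx-y^2\ddy$.

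For part~(B), both foliations are of Martinet--Ramis type. After choosing coordinates adapted to $X$, Table~\ref{TheTable} (items~10, 12 and~13) lets me write $X$ as an invertible function times a monomial times a resonant saddle with eigenvalues $m,-n$, whose first integral is $x^ny^m$. The first and main step is to prove that $\fol_X$ is linearizable, ruling out the genuine higher-order terms of item~10: here I would analyze the holonomy of $\fol_X$ along a separatrix and exploit the rank~$2$ symmetry---the flow of $Y$ preserves $X$, hence $\fol_X$---together with the semicompleteness criterion recalled in Section~\ref{morelikebasics} (nonvanishing of the time-form over open paths) to force the holonomy to be analytically linearizable, so that $\fol_X$, and symmetrically $\fol_Y$, is conjugate to $mx\ddx-ny\ddy$. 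I expect this to be the chief obstacle, since it is precisely the point at which the Martinet--Ramis modulus must be shown to be annihilated by the extra commuting field; this is what the machinery of Section~\ref{resonantfoliations} is designed to supply.

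With linearizability in hand the remainder is more algebraic. Since the first integrals of $X$ are functions of $x^ny^m$, the invertible multiplier can be absorbed into such a function; combined with the integrality $am-bn=\pm1$ and the strict meromorphy of $x^{a_\mu}y^{b_\mu}/x^ny^m$, this turns the weight into the monomial $(x^ny^m)^{k_1}x^{a_\mu}y^{b_\mu}$ and in particular discards the spurious factor of item~13. Finally I would describe $Y$ through~(\ref{forcommutation}): any field commuting with $X$ is $f_1X+f_2W$, where $W$ is the diagonal field $bx\ddx-ay\ddy$ completing the saddle to a frame, and $f_1,f_2$ are first integrals of $X$, hence functions of $x^ny^m$. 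Writing $f_1=u_1(x^ny^m)$ and packaging $f_2$ through the term $x^{-a_\mu}y^{-b_\mu}u_2(x^ny^m)$ gives the announced form of $Y$, while imposing that $Y$ be holomorphic, semicomplete and again of Martinet--Ramis type forces $u_1(0)=1$, the order condition on the $u_2$-term, and in particular that $X$ and $Y$ share the eigenvalues $m,-n$.
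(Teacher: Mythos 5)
Your part (A) follows essentially the same route as the paper: straighten $\fol_X$, use $[X,Y]=0$ to force the second component of $Y$ to depend only on $y$, invoke Lemma~\ref{commuting_with_regularfoliation}, and then split into the transverse/tangent regular cases and the two non-regular models for $Y$. This matches Lemmas~\ref{finishing_job_regularfoliation-1}, \ref{finishing_job_regularfoliation-2} and~\ref{addedlater-lemmaXSiegel}, although you gloss over two points that take real work in the paper: the derivation of the precise form $a(x,y)=1+\alpha n x + xr(y)+s(y)$ (done there via the affine structures induced on the invariant axis), and the characterization $g_1'(0)=g_2(0)=0$ of semicompleteness in the fourth model, which is the content of Lemma~\ref{addedlater-lemmaXSiegel}.

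The genuine gap is in part (B), exactly at the step you flag as the chief obstacle. Your proposed mechanism --- that the rank~$2$ symmetry together with semicompleteness ``forces the holonomy to be analytically linearizable'' --- is not correct as stated, and no direct argument of that kind can work. What the symmetry actually yields (Lemma~\ref{commutinginduced} combined with the Martinet--Ramis description of the leaf space as a collar of spheres) is a nontrivial holomorphic vector field on the leaf space $\Lambda$; this trivializes only the gluing modulus $\varphi$, hence gives conjugacy of $\fol_X$ to its \emph{formal} normal form $\fol_{\lambda,\tpp}$ (Lemma~\ref{Siegel1.com}), which is still non-linearizable whenever $\tpp\geq 1$. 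Indeed, the orbit space of the model holonomy $\sigma\circ g_{m\tpp,\lambda}$ (the ``euclidean'' collar) admits plenty of automorphisms, so the induced flow cannot by itself linearize the holonomy. The contradiction in the paper requires two further ideas absent from your sketch: (i) one may assume the orders satisfy $k_2>k_1$ strictly, by replacing $Y$ with $Z=X-Y$ (Lemma~\ref{k_2greaterthank_1}); and (ii) the projection of $Y$ (resp. $Z$) on a transversal $\Sigma$ is preserved by the holonomy $h_X$, hence must be a constant multiple of the canonical vector field $2\pi i\, z^{m\tpp+1}/(1+\lambda z^{m\tpp})\,\partial/\partial z$, which forces $k_2=\tpp$ and $k_1=\tpp$ simultaneously, contradicting $k_2>k_1$. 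Two smaller but still genuine omissions in the same part: you assert $am-bn=\pm 1$ without justification, whereas for a linearizable $\fol_X$ the weight could a priori be a power of the first integral $x^ny^m$ (the case $am-bn=0$, compatible with item~10 of Table~\ref{TheTable}), and excluding it is Lemma~\ref{lemmaDependence1}, whose proof again uses the commutation with $Y$ through a period-variation argument; and the invertible multiplier in front of $X$ is not a first integral, so it cannot simply be ``absorbed into a function of $x^ny^m$'' --- removing it is Lemma~\ref{eliminatinginvertiblefactor-f}, proved by constructing a conjugation leafwise from the flows, with the constancy of periods again supplied by the commuting field.
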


The proof of Main Theorem can now be obtained as follows.

\begin{proof}[Proof of Main Theorem]
First we remind the reader of the basic observation pointed out at the beginning of Section~\ref{morelikebasics}.
Namely assume that $X$ and $Y$ form a rank~$2$ system of commuting holomorphic vector fields. Then every (holomorphic/meromorphic)
vector field $Z$ commuting with $X$ has the form $Z = fX + gY$ where $f,g$ are first integrals of $X$. Also, owing
to Proposition~\ref{solving-centralizer_elliptic}, the elliptic cases
in Table~\ref{TheTable} can be ruled out from the discussion.

Let us then begin the discussion with the case of parabolic vector fields.
Here Proposition~\ref{non_invertible_function} allows us to consider coordinates where the invertible function
appearing in Table~\ref{TheTable} is actually constant. More precisely we can assume that $X$ is given
by $x^2\ddx - y (nx-(n+1)y)\ddy$, with $n\in \mathbb{N}$, or by $(y-2x^2) \ddx - 2xy \ddy$. The corresponding
first integrals are $x^{n+1}y/(x-y)$ and $y^2/(y-x^2)$. Clearly they are
{\it strictly meromorphic}. Furthermore, we already know
vector fields $Y$ forming a rank~$2$ system of commuting semicomplete vector fields with $X$ in each of these
cases. Namely we can choose $Y = x^ny^2 \ddy$ - when $X$ is a quadratic vector field - and $Y = y(x \ddx + y\ddy)$ when
$X$ is nilpotent. Thus the general form of a vector field commuting with $X$ would be
$$
fX + gY
$$
where $f$ and $g$ are first integrals of $X$. These first integrals are however functions of the above indicated
meromorphic first integrals of~$X$ and thus they only way for the vector field $fX + gY$ to be holomorphic is to have
$f$ and $g$ constant. The corresponding statement in Main Theorem follows at once.

The same argument can be applied
when $X$ has the form indicated in item~11 of Table~\ref{TheTable}, with $n \in \N^{\ast}$. Again
Proposition~\ref{solving-centralizer_elliptic} allows us to assume that $X$ is given by $y(nx \ddx
+ y \ddy)$ in suitable coordinates. In particular, $X$ admits $x/y^n$ as a meromorphic first integral.
An example of holomorphic vector field forming a rank~$2$ system of commuting vector fields with
$X$ is provided by $Y = y^n \ddx$.
The main difference with the case of parabolic vector fields
arises from the fact that $Y$ multiplied by $x/y^n$
is again holomorphic. Namely $Y$ becomes $x \ddx$. However the vector field $x \ddx$ can be ruled out from the discussion
since it has one eigenvalue different from zero at the origin. Nonetheless, if in addition $n=1$, then $X$ multiplied
by $x/y$ yields the vector field
$$
x(x \ddx + y \ddy)
$$
which is again holomorphic, semicomplete, and has both eigenvalues at the origin equal to zero.
Thus the general form of a holomorphic vector field commuting with $X = y(x \ddx + y \ddy)$ and having both
eigenvalues at the origin equal to zero is
\begin{equation}
c_1 y \ddx + c_2 x(x \ddx + y \ddy) \label{justonecomment}
\end{equation}
with $c_1, c_2 \in \C$ as in Main Theorem. Finally, as pointed out in the introduction, all vector fields in~(\ref{justonecomment})
are semicomplete. In fact, for $c \neq 0$, all the vector fields of the form $y\ddx + cx (x\ddx + y\ddy)$ are conjugate
to the nilpotent parabolic vector field $(y-2x^2) \ddx - 2xy \ddy$.

The remaining possibilities for the foliation associated with $X$ (resp. $Y$) is either to be regular or to have a Martinet-Ramis
singularity at the origin. If one of them is regular, then the result follows from the first part of Theorem~\ref{harder_cases}.
Finally if they both have Martinet-Ramis singularities the second part of Theorem~\ref{harder_cases} implies the statement of
Main Theorem.
\end{proof}

In terms of solving the centralizer problem as stated in this paper it only remains to prove
Theorem~\ref{harder_cases}. The corresponding proof will take up the remainder of this section and the whole of next section.
In fact, the case in which the foliation associated with one of the vector fields, $X$ or $Y$, is regular
will be treated below whereas the case in which both vector fields have associated foliations with
Martinet-Ramis singular points will be the object of the next section.

In the sequel, we assume that $X$ and $Y$ are as in Theorem~\ref{harder_cases} and that the foliation associated with
$X$ is regular at the origin. Owing to Table~\ref{TheTable}, we can then set
\begin{equation}
X = y^k F(x,y) \ddx \label{regular_foliations_forX}
\end{equation}
with $a \in \N$ and where $F(x,y)$ has one of the following three forms: constant (equal to~$1$), $F(x,y) =x$,
or $F(x,y) = x^2 + g_1(y) x + g_2(y)$ where $g_1$ and $g_2$ are holomorphic functions on $(\mathbb{C}^2,0)$
satisfying $g_1(0) = g_2(0) = 0$. In the same coordinates, we let $Y = A \ddx + B \ddy$. The condition
$[X,Y]=0$ immediately implies that $B$ depends only on $y$, i.e. $B =B(y)$. In particular $B (y) = y^2 + {\rm h.o.t.}$
(cf. the proof of Lemma~\ref{commuting_with_regularfoliation}). Assume now that the foliation associated
with $Y$ {\it is not regular}\, at the origin. Then the combination
of Lemma~\ref{commuting_with_regularfoliation} and Proposition~\ref{non_invertible_function} ensures the existence
of $n \in \N^{\ast}$ such that $Y$ is conjugate to one of the following vector fields:
\begin{itemize}
  \item $x^2\ddx - y (nx-(n+1)y)\ddy$;
  \item $y(nx \ddx + y \ddy)$;
  \item $yf(y)(nx \ddx - y \ddy)$, where $f(0) \neq 0$.
\end{itemize}
In the first two cases, the centralizer of the corresponding vector field $Y$ was already worked out
in detail (cf. the above proof of Main Theorem) and it follows that Theorem~\ref{harder_cases} holds in the present context.
The third case where $Y$ is conjugate to $yf(y)(nx \ddx - y \ddy)$, with $f(0) \neq 0$ and $n \in \N^{\ast}$,
requires a few additional comments. First, there follows from the proof of
Lemma~\ref{eliminatinginvertiblefactor-f} in Section~\ref{resonantfoliations} that, again, $f$ can be made constant.
Thus $Y$ is actually of the form $y(nx \ddx - y \ddy)$ and $X$ is a semicomplete vector field commuting with $Y$
and inducing a regular foliation on a neighborhood of $(0,0) \in \C^2$. An example of vector field forming a rank~$2$ system
of commuting vector fields with $Y = y(nx \ddx - y \ddy)$ is provided by $y^nx^2 \partial /\partial x$.
Thus the general vector field commuting with $Y$, has the form
\begin{equation}
f_1 y^n x^2 \partial /\partial x + f_2  y(nx \ddx - y \ddy) \, \label{justabove-formula}
\end{equation}
where $f_1, f_2$ are first integrals of $Y$ and hence functions of $xy^n$. Precisely $f_1, f_2$ are functions of
a single variable $z$ and by $f_1$ (resp. $f_2)$ above, we actually mean the composition $(x,y) \mapsto f_1 (xy^n)$
(resp. $(x,y) \mapsto f_2 (xy^n)$).

In particular, $X$ must have the form
indicated in~(\ref{justabove-formula}). However the foliation associated with
$X$ must be regular meaning that $f_1 y^nx^2$ must divide the vector field $X$ itself which, in turn, means
that $f_1 y^nx^2$ must divide $yf_2$. Thus we have
\begin{equation}
X = f_1 y^nx^2 \left[ \frac{\partial}{\partial x} + \frac{yf_2}{f_1 y^nx^2} \left( nx \frac{\partial}{\partial x}
- y \frac{\partial}{\partial y} \right) \right] \, , \label{CorrectingtheformofX-regular}
\end{equation}
where the quotient $yf_2 / f_1 y^nx^2$ is a holomorphic function. Note that, in principle, $f_1$ might have a pole
of order~$1$ at $0 \in \C$. This possibility however is ruled out by the fact that the eigenvalues of $X$ at the
origin must be zero. Thus $f_1$ is actually holomorphic at $0 \in \C$. Moreover, $X$ still must be semicomplete
and this implies that $f_1 (0) \neq 0$ so that we can assume $f_1(0) =1$. Note that this implies that $f_2$ is holomorphic
as well and, as function of a single variable, it also satisfies $f_2 (0) = f_2' (0) =0$.
Whereas $X$ must admit the normal form of Formula~(\ref{CorrectingtheformofX-regular}) as indicated above, it is not
clear at this point whether or not all vector fields given by Formula~(\ref{CorrectingtheformofX-regular}) are, in fact,
semicomplete. This, however, is precisely the content of Lemma~\ref{addedlater-lemmaXSiegel} below:

\begin{lema}
\label{addedlater-lemmaXSiegel}
Let $X$ be given by
\begin{equation}
X = g_1 (xy^n) y^nx^2 \left[ \frac{\partial}{\partial x} + y^{n+1} g_2 (xy^n)  \left( nx \frac{\partial}{\partial x}
- y \frac{\partial}{\partial y} \right) \right] \, , \label{CorrectingtheformofX-regular-lemmaversion}
\end{equation}
where $g_1$, $g_2$ are holomorphic functions of a single variable with $g_1 (0) =1$. Then the vector field $X$
is semicomplete if and only if $g_1' (0) = g_2 (0) =0$.
\end{lema}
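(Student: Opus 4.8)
The plan is to exploit two quantities that almost linearise $X$. Writing $u = xy^n$, a direct computation gives $X(u) = g_1(u)\,u^2$, while the radial part $nx\ddx - y\ddy$ annihilates $u$; hence, taking $G_2$ with $G_2'=g_2$ and $G_2(0)=0$, the function $v = 1/y - G_2(xy^n)$ satisfies $X(v)=0$ and is a (strictly meromorphic) first integral. In the ``coordinates'' $(u,v)$ the field becomes simply $X = g_1(u)\,u^2\,\partial/\partial u$, and on each leaf $L$ of the associated (regular) foliation the time-form is $dT_L = du/(g_1(u)u^2)$. Moreover the generator $Z = \ddx + y^{n+1}g_2(xy^n)(nx\ddx - y\ddy)$ of the foliation satisfies $Z(u)=y^n$, so that away from $\{y=0\}$ the restriction $u|_L$ is a biholomorphism onto a disc and paths in the $u$-line lift to paths in leaves.

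First I would settle the necessity of $g_1'(0)=0$. Since $g_1(0)=1$, the residue of $dT_L = du/(g_1(u)u^2)$ at $u=0$ equals $-g_1'(0)$. If this residue were nonzero, the primitive of $dT_L$ would carry a logarithmic term, and one could choose an open path $\gamma$ in a punctured $u$-disc with $\int_\gamma du/(g_1(u)u^2)=0$ and distinct endpoints; lifting $\gamma$ through $u|_L$ (for a leaf with $y\neq 0$) to an open path $c\subset L$ gives $\int_c dT_L = 0$, contradicting the time-form criterion recalled above (compare Lemma~\ref{one-dimensional_cases}). Hence $g_1'(0)=0$, which is exactly the condition that the one-dimensional field $g_1(u)u^2\,\partial/\partial u$ have vanishing residue, i.e. be semicomplete in $u$.

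For the converse I would build the semi-global flow explicitly. Assuming $g_1'(0)=0$ and $g_2(0)=0$, the equation $\dot u = g_1(u)u^2$ is semicomplete near $u=0$, and carrying $v$ as a constant of motion one recovers the flow in the original chart through $y = (v+G_2(u))^{-1}$ and $x = u\,(v+G_2(u))^{n}$. It then remains to verify the maximality (leaving-compacts) condition for this reconstructed flow; here the hypothesis $g_2(0)=0$, equivalently $G_2(u)=O(u^2)$, is what keeps the reconstruction map under control near the curve of zeros $\{y=0\}$, where $v$ blows up, and prevents orbits from failing to escape compact subsets.

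The step I expect to be the main obstacle is the necessity of $g_2(0)=0$. The difficulty is precisely that the leafwise time-form $du/(g_1(u)u^2)$ is insensitive to $g_2$, so the open-path argument used for $g_1'(0)$ cannot detect it; the dependence on $g_2$ is hidden in the geometry of the leaves, encoded by $v=1/y-G_2(xy^n)$, near the separatrix $\{y=0\}$. To capture it I would resolve $X$ by blowing up: since the terms carrying $g_2$ are of strictly higher order than the leading part $y^n x^2\,\ddx$, one follows the sequence of blow-ups adapted to that leading part and tracks how the time-form transforms, aiming to show that when $g_2(0)\neq 0$ a nonzero residue (equivalently a forbidden open path of zero time) appears at one of the singular points of the transform, contradicting Lemma~\ref{one-dimensional_cases}. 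The bookkeeping of this residue through the successive blow-ups --- showing it is a nonzero multiple of $g_2(0)$ --- is the delicate part of the argument and the place where genuinely new computation is required.
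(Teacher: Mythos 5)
Your treatment of the condition $g_1'(0)=0$ is correct, and it is essentially the paper's computation read in a better variable: since $X(xy^n)=g_1(xy^n)(xy^n)^2$ and $u=xy^n$ restricts to a coordinate on each leaf off $\{y=0\}$, the time form on a leaf is $du/(g_1(u)u^2)$, whose residue at the zero of $X$ is $-g_1'(0)$, so Lemma~\ref{one-dimensional_cases} forces $g_1'(0)=0$ (the paper gets the same residue after straightening the foliation). Your sufficiency sketch is also workable. However, as you yourself flag, you do not prove the necessity of $g_2(0)=0$: you only propose to produce, after a sequence of blow-ups, a residue proportional to $g_2(0)$. As a proof of the stated lemma, that is a genuine gap.

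The deeper point is that this gap cannot be closed, because that half of the statement is false --- and your own first integrals prove it. Let $G_2$ be the primitive of $g_2$ with $G_2(0)=0$, and with your $u=xy^n$, $v=1/y-G_2(xy^n)$ set
$$
\tilde{x} \, = \, uv^n \, = \, x\bigl(1-yG_2(xy^n)\bigr)^n \, , \qquad
\tilde{y} \, = \, \frac{1}{v} \, = \, \frac{y}{1-yG_2(xy^n)} \, .
$$
This is a holomorphic change of coordinates tangent to the identity, and from $X(u)=g_1(u)u^2$, $X(v)=0$ one gets $X(\tilde{y})=0$ and $X(\tilde{x})=g_1(u)u^2v^n=g_1(\tilde{x}\tilde{y}^n)\tilde{y}^n\tilde{x}^2$. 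In other words, this map conjugates $X$ to $g_1(\tilde{x}\tilde{y}^n)\,\tilde{y}^n\tilde{x}^2\,\partial/\partial\tilde{x}$, i.e. to the model with $g_2\equiv 0$; hence semicompleteness of $X$ depends on $g_1$ alone, and the correct statement is that $X$ is semicomplete if and only if $g_1'(0)=0$. Concretely, for $n=1$, $g_1\equiv 1$, $g_2\equiv 1$, the field $X=yx^2(1+xy^2)\ddx-x^2y^4\ddy$ satisfies $\dot{u}=u^2$, $\dot{v}=0$ along its orbits, so every solution is a globally meromorphic function of time and $X$ is semicomplete on any bounded neighborhood of the origin although $g_2(0)=1$; equivalently, $\tilde{x}=x(1-xy^2)$, $\tilde{y}=y/(1-xy^2)$ conjugate it to $\tilde{y}\tilde{x}^2\,\partial/\partial\tilde{x}$, a model of Table~\ref{TheTable}. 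What you describe as the main obstacle --- the insensitivity of the leafwise time form to $g_2$ --- is not an obstacle to be overcome by blow-ups: the residue of the time form is an invariant of the leaf, independent of all coordinate choices, so no resolution can make a residue proportional to $g_2(0)$ appear; the insensitivity is precisely the proof that $g_2$ is irrelevant.

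For the record, the paper's own proof slips at the point where $X$ is rewritten in the straightening coordinates: the $\partial/\partial\overline{x}$-component of $X$ equals $g_1(xy^n)y^nx^2\bigl(1+nxy^{n+1}g_2(xy^n)\bigr)$, and the proof drops the last factor. Restoring it, its contribution $+ng_2(0)\overline{y}_0^{\,n+1}$ to the coefficient governing the residue cancels exactly the contribution $-ng_2(0)\overline{y}_0^{\,n+1}$ coming from the bending of the leaves (the leaf through $(0,\overline{y}_0)$ is $y=\overline{y}_0-g_2(0)\overline{y}_0^{\,n+2}\overline{x}+\cdots$), leaving residue $-g_1'(0)$ on every leaf, in agreement with your computation in the variable $u$. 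So your proposal is incomplete as written, but the step you could not carry out is missing because it is unprovable; the lemma, and correspondingly the normal form in item~($\imath v$) of Main Theorem, should be restated with the condition $g_1'(0)=0$ only, $g_2$ being removable by the coordinate change above.
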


Summarizing what precedes, in order to understand the case of pairs of vector fields $X$ and $Y$
as in Theorem~\ref{harder_cases} such that the foliation associated with $X$
is regular, we can assume now that the foliation associated with $Y$ is regular as well.
This condition will be assumed to hold in the remainder of the section. Recall that $Y = A \ddx + B (y) \ddy$
with $B(0) = B'(0) =0$. Note that $A (0,0) = 0$ since
the vector field $Y$ has a singular point at the origin (actually this singular point must also have both eigenvalues
equal to zero). Therefore
$A$ is divisible by $y$. Thus one of the following possibilities must hold:
\begin{enumerate}
       \item $Y = A \ddx + B (y) \ddy =  y(a (x,y) \ddx + b(y) \ddy)$ with $a(0,0) \neq 0$ and $b(0) =0$;
       \item $Y = A \ddx + B (y) \ddy =  y^2 (a (x,y) \ddx + b(y) \ddy)$ where necessarily $b(0) \neq 0$.
\end{enumerate}
Next we have:

\begin{lema}
\label{finishing_job_regularfoliation-1}
Assume that $X$ and $Y$ are holomorphic vector fields as in the statement of Theorem~\ref{harder_cases}.
Assume also that $X =y^k F(x,y) \ddx$ whereas $Y$ has the above indicated form~(2), i.e.
$Y = y^2 (a (x,y) \ddx + b(y) \ddy)$ with $b(0) \neq 0$. Then the coordinates $(x,y)$ can be chosen so that
$$
X = x^2 \ddx \; \; \; \; {\rm and} \; \; \; \; Y = y^2 \ddy \, .
$$

\end{lema}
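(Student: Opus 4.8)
The plan is to exploit the transversality of the two regular foliations and then to reduce everything to one-dimensional considerations via Lemma~\ref{one-dimensional_cases}. First I would note that the foliation $\fol_X$ associated with $X = y^k F(x,y)\ddx$ is the horizontal foliation $\{ dy =0\}$, whereas the foliation $\fol_Y$ associated with $Y = y^2(a(x,y)\ddx + b(y)\ddy)$ is tangent to $a\ddx + b\ddy$. At the origin these directions are $\ddx$ and $a(0,0)\ddx + b(0)\ddy$; since $b(0)\neq 0$, the two foliations are transverse there. Taking $v$ to be a first integral of $\fol_Y$ (normalized so that $v = x + {\rm h.o.t.}$) together with $u=y$, which is a first integral of $\fol_X$, the transversality guarantees that $(u,v)$ is a holomorphic coordinate system on a neighborhood of the origin. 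In these coordinates $X$ is tangent to $\{ v = {\rm const}\}$ and $Y$ is tangent to $\{ u = {\rm const}\}$, so that $X = P(u,v)\,\partial /\partial u$ and $Y = Q(u,v)\,\partial /\partial v$ for suitable holomorphic functions $P$ and $Q$.

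Next I would use the commutation relation. A direct computation for $X = P\,\partial /\partial u$ and $Y = Q\,\partial /\partial v$ gives $[X,Y] = -Q\,(\partial P/\partial v)\,\partial /\partial u + P\,(\partial Q/\partial u)\,\partial /\partial v$. Since $[X,Y]=0$ and neither $P$ nor $Q$ vanishes identically, we conclude that $\partial P/\partial v \equiv 0$ and $\partial Q/\partial u \equiv 0$; in other words $P=P(u)$ and $Q=Q(v)$ each depend on a single variable, so that $X = P(u)\,\partial /\partial u$ and $Y=Q(v)\,\partial /\partial v$.

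Finally, semicompleteness pins down the two one-variable vector fields. Because $X$ is singular at the origin with both eigenvalues zero, we have $P(0)=P'(0)=0$; the restriction of $X$ to a leaf $\{ v = {\rm const}\}$ is the semicomplete one-dimensional field $P(u)\,\partial /\partial u$, so Lemma~\ref{one-dimensional_cases} forbids $P$ from having order $\geq 3$ and forces $P$ to have order exactly~$2$ with zero residue. Exactly as in the proof of Proposition~\ref{non_invertible_function}, a holomorphic change of the single variable $u$ (followed by a rescaling) then conjugates $P(u)\,\partial /\partial u$ to $u^2\,\partial /\partial u$; the identical argument conjugates $Q(v)\,\partial /\partial v$ to $v^2\,\partial /\partial v$. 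Since these two conjugations act on disjoint variables, they assemble into a single coordinate change which preserves both coordinate foliations, and after renaming we obtain $X = x^2\ddx$ and $Y = y^2\ddy$, as claimed. The one step deserving care is the transversality of $\fol_X$ and $\fol_Y$: it is precisely the hypothesis $b(0)\neq 0$ of case~(2) that provides it and that makes the simultaneous straightening possible, whereas in case~(1) the two foliations would be tangent at the origin; once transversality is secured, the commutation and the one-dimensional normalizations are routine.
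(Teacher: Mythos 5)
Your proof is correct and is essentially the paper's own argument: straighten both (transverse, regular) foliations simultaneously, use $[X,Y]=0$ to force $X=P(u)\,\partial/\partial u$ and $Y=Q(v)\,\partial/\partial v$ with $P,Q$ functions of a single variable, and then apply Lemma~\ref{one-dimensional_cases} (order exactly two, zero residue) to normalize each factor independently, yielding $x^2\ddx$ and $y^2\ddy$. One purely notational slip: with your choices $u=y$ and $v$ a first integral of $\fol_Y$, the leaves of $\fol_X$ are $\{u=\mathrm{const}\}$ and those of $\fol_Y$ are $\{v=\mathrm{const}\}$, so in fact $X\parallel\partial/\partial v$ and $Y\parallel\partial/\partial u$; interchanging the names of $u$ and $v$ (and likewise the inessential normalization $v=x+\mathrm{h.o.t.}$, whose linear part must actually annihilate $a(0,0)\ddx+b(0)\ddy$) restores consistency with the rest of your argument, which is unaffected.
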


\begin{proof}
Note that the foliation associated with the vector field $Y$ is regular at the origin and it also transverse
to the (regular) foliation associated with $X$. Therefore there exist local coordinates $(u,v)$ where the
foliation associated with $X$ is given by $\partial /\partial u$ and the foliation associated with $Y$
is given by $\partial /\partial v$. In fact, in these $(u,v)$-coordinates, we have
$$
X = v^k f(u,v) \partial /\partial u \; \; \; \; {\rm and} \; \; \; \;
Y = v^2 g(u,v) \partial /\partial v \, .
$$
The condition $[X,Y]=0$ now amounts to having
$$
\frac{\partial (v^k f(u,v))}{\partial v} =0 \; \; \; {\rm and} \; \; \;
\frac{\partial (v^2 g(u,v))}{\partial u} =0 \, .
$$
In other words, the function $v^k f(u,v)$ does not depend on~$v$ and the function $v^2 g(u,v)$ does not
depend on~$u$. Thus we must have
$k=0$, $f = f(u)$ with $f(u) = u^2 + {\rm h.o.t.}$, and $g=g(v)$. Furthermore $g(0) \neq 0$ since $b(0) \neq 0$
(or more directly $Y$ is semicomplete). Finally
since again these vector fields must be semicomplete, Lemma~\ref{one-dimensional_cases} implies that $u$
and $v$ can independently be changed in coordinates $x$ and $y$ where the vector fields $X$ and
$Y$ have the indicated forms. The proof of the lemma is finished.
\end{proof}

Lemma~\ref{finishing_job_regularfoliation-2} summarizes the case in which the foliation
associated with the vector field $X$ is regular at the origin.

\begin{lema}
\label{finishing_job_regularfoliation-2}
Assume that $X$ and $Y$ are holomorphic vector fields as in the statement of Theorem~\ref{harder_cases}.
Assume also that the foliation associated with $X$ is regular at the origin.
Then, up to linear combination, there are coordinates where the pair $X$ and $Y$ takes on
one of the forms indicated in item~(A) of Theorem~\ref{harder_cases}.
\end{lema}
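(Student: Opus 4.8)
The plan is to prove Lemma~\ref{finishing_job_regularfoliation-2} by assembling the configurations already disposed of and then treating the single genuinely new one. Throughout I keep $X = y^kF(x,y)\ddx$, so that the foliation $\fol$ associated with $X$ is the horizontal one, and $Y = A\ddx + B(y)\ddy$ with $B(y) = y^2 + {\rm h.o.t.}$; since $Y$ is singular at the origin one has $A = y\,a(x,y)$. The first split is according to whether the foliation $\fol_Y$ associated with $Y$ is regular at the origin.

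If $\fol_Y$ is \emph{not} regular, nothing new is required. By Lemma~\ref{commuting_with_regularfoliation} combined with Proposition~\ref{non_invertible_function}, $Y$ reduces to one of $x^2\ddx - y(nx-(n+1)y)\ddy$, $y(nx\ddx + y\ddy)$, or $yf(y)(nx\ddx - y\ddy)$ with $f(0)\neq 0$. In the parabolic case Proposition~\ref{solving-centralizer_parabolic} pins the centralizer down and forces the regular-foliation partner to be $X = y^nx^2\ddx$ (last item of~(A)); in the nodal case $y(nx\ddx + y\ddy)$ the centralizer computation from the proof of Main Theorem gives $X = y^n\ddx$ (third item of~(A)); and in the Martinet--Ramis case Lemma~\ref{eliminatinginvertiblefactor-f} makes $f$ constant, formula~(\ref{justabove-formula}) describes the centralizer, regularity of $\fol$ produces the shape~(\ref{CorrectingtheformofX-regular}), and Lemma~\ref{addedlater-lemmaXSiegel} supplies $g_1'(0)=g_2(0)=0$ (fourth item of~(A)). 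I would merely record these three reductions.

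The substance lies in the case where $\fol_Y$ is also regular. Then either $b(0)\neq 0$, i.e. $Y = y^2(a\ddx + b\ddy)$, or $b(0)=0$, i.e. $Y = y(a\ddx + b\ddy)$ with $a(0,0)\neq 0$. The first alternative is exactly the hypothesis of Lemma~\ref{finishing_job_regularfoliation-1}, yielding $X = x^2\ddx$, $Y = y^2\ddy$ (second item of~(A)). The second alternative is the crux: here $b(0)=0$ forces the leaf of $\fol_Y$ through the origin to be the axis $\{y=0\}$ that also carries $\fol$, so the two regular foliations are \emph{tangent} at the origin and the transversality normalization used in Lemma~\ref{finishing_job_regularfoliation-1} is no longer available.

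To handle this configuration I would first show that $F$ is a unit. Since $[X,Y]=0$, the flow of $Y$ preserves the zero divisor of $X$; as $\{y=0\}$ is itself a curve of zeros of $Y$ it is invariant, hence so is the residual curve $\{F=0\}$. But $\fol_Y$ is regular with leaf $\{y=0\}$ through the origin, so the only germ of $\fol_Y$-invariant analytic curve through the origin is $\{y=0\}$ itself; since $F(x,0)$ equals $x$ or $x^2$, one has $\{F=0\}\not\supset\{y=0\}$ while $F(0,0)=0$, a contradiction unless $F$ is a unit. By Table~\ref{TheTable} the only unit among the admissible $F$ is $F\equiv 1$, so $X = y^n\ddx$ with $n\geq 1$. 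With $F\equiv 1$ the commutation identity~(\ref{forcommutation}) collapses to $y\,\partial_x A = nB$, that is $\partial_x a = n\,b(y)/y$, whose right-hand side is holomorphic in $y$ alone because $b(y)=\alpha y + {\rm h.o.t.}$; integrating in $x$ and normalizing $a(0,0)=1$ by rescaling $Y$ gives $a(x,y) = 1 + \alpha n x + x\,r(y) + s(y)$ with $r(0)=s(0)=0$, which is precisely the first item of~(A), while the order-$2$ vanishing of $B$ coming from semicompleteness of $Y$ guarantees $\alpha\neq 0$. The main obstacle is exactly this tangent, doubly-regular configuration: the transverse straightening fails, so everything hinges on the invariant-divisor argument forcing $F$ to be a unit; once $X = y^n\ddx$ is secured the remaining form of $a$ is a one-line integration of the bracket relation, and all other possibilities are bookkeeping over cases settled earlier.
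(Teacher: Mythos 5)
Your proposal is correct and follows the same global decomposition as the paper's proof: the cases where $\fol_Y$ is singular are delegated to the discussion preceding the lemma (exactly as the paper does with its ``as previously seen'' reduction), the transverse doubly-regular case is handled by Lemma~\ref{finishing_job_regularfoliation-1}, and the real content is the tangent case $Y = y(a(x,y)\ddx + b(y)\ddy)$ with $a(0,0)\neq 0$, $b(0)=0$. Where you genuinely diverge is in the key step forcing $F\equiv 1$. The paper argues via the affine structures induced by $X$ and by $Y$ on the common invariant axis $\{y=0\}$ (Section~4 of \cite{GuR}): commutation forces these structures to coincide, the one coming from $Y$ is regular since $a(0,0)\neq 0$, hence $F(0,0)\neq 0$ and $F\equiv 1$. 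You instead use that the flow of $Y$ preserves the zero divisor of $X$, so each component of $\{F=0\}$ would be an analytic curve through the origin, distinct from $\{y=0\}$ because $F(x,0)$ is $x$ or $x^2$, and invariant by $\fol_Y$ --- impossible since the only $\fol_Y$-invariant curve through the origin of a regular foliation is (contained in) its leaf $\{y=0\}$. This is more elementary and self-contained; it is the same mechanism the paper itself uses in Lemma~\ref{commonseparatrices}. To make it airtight you should state explicitly that $Y$ does not vanish identically on any component of $\{F=0\}$ (true because the zero set of $Y$ near the origin is exactly $\{y=0\}$, as $a(0,0)\neq 0$), since this is what upgrades flow-invariance of the curve to $\fol_Y$-invariance; also, the bracket identity you integrate is Equation~(\ref{forcommutation_includingaffinestructure}) specialized to $F\equiv 1$, not Equation~(\ref{forcommutation}). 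The one substantive piece of the paper's proof you omit is its final step: after deriving $a(x,y)=1+\alpha n x + xr(y)+s(y)$, the paper verifies that \emph{every} vector field $Y$ of this form is semicomplete, by straightening $\fol_Y$ into horizontal lines and reducing to a regular vector field. That verification is not required by the lemma as literally stated (both vector fields are assumed semicomplete), but it is what supports the ``Conversely'' clause of Main Theorem for this family, so if your argument is to replace the paper's, that check must be supplied elsewhere.
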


\begin{proof}
We keep the preceding notation. As previously seen, and up to proving Lemma~\ref{addedlater-lemmaXSiegel},
we only need to consider the case in which the foliation
associated with $Y$ is regular at the origin. Moreover, owing to Lemma~\ref{finishing_job_regularfoliation-1},
we can assume without loss of generality that $Y$ admits the form~(1) above,
i.e. $Y = y(a (x,y) \ddx + b(y) \ddy)$ with $a(0,0) \neq 0$ and $b(0) =0$ so that the axis $\{ y=0\}$
is invariant by the foliation associated with $Y$. Also $A = ya$ and $B = yb$.
Besides the fact that $B$ depends solely on the variable~$y$, the condition $[X,Y] =0$ also yields
\begin{equation}
y^k F(x,Y) \,  \left( \frac{\partial A}{\partial x} \right) - A (x,y) \, \left( \frac{\partial (y^k F)}{\partial x} \right)
-B (y) \, \left( \frac{\partial (y^k F)}{\partial y} \right) = 0 \, . \label{forcommutation_includingaffinestructure}
\end{equation}
Recall that the axis $\{ y=0\}$ is invariant by the foliation associated with $Y$ ($a(0,0) \neq 0$ and $b(0) =0$).
Whereas $Y$ vanishes identically
over $\{ y=0\}$, it induces an {\it affine structure}\, on this axis by means of the construction in Section~4 of
\cite{GuR}. The same applies for the vector field $X$ and, in this case, the resulting affine structure is compatible
with the translation structure obtained directly from the restriction of $X$ to this axis provided that $k \neq 0$.
The main issue here is that the affine structures
induced by $X$ and by $Y$ must coincide since $X$ and $Y$ commute. This last assertion follows essentially from
Equation~(\ref{forcommutation_includingaffinestructure}) above. Indeed, and whereas in the sequel we will argue from the
point of view of these affine structure as developed in \cite{GuR}, the reader unfamiliar with
this material can produce self-contained - if slightly {\it ad-hoc} - proofs for our claims by systematically
replacing $F$, $A$, and $B$ by their respective forms, dividing both sides in
Equation~(\ref{forcommutation_includingaffinestructure}) by the maximal common power of~$y$ and then setting $y=0$
in the remaining equation.

Note that the case $k=0$ is special in what follows only because it forces $F$ to take on the form
$F(x,y) = x^2 + g_1(y) x + g_2(y)$ (the origin must be a singular point of $X$ having both eigenvalues
equal to zero). This said, the affine structure induced by $X$ on a neighborhood of $0 \in \{ y=0 \}$ arises
from the vector field $F(x,0) \ddx$. On the other hand, the affine structure arising from $Y$ is determined by
the vector field $a(x,0) \ddx$ and, in particular, this latter affine structure is regular at $0 \in \{ y=0 \}$
since $a(0,0) \neq 0$. Therefore the affine structure associated with $F(x,0) \ddx$ must be regular as well and this
implies $F(0,0) \neq 0$ which, in turn, implies that $F$ must be constant equal to~$1$ (in particular $k$ is strictly
positive). Also, up to a multiplicative constant, we set $a(0,0) =1$ and $B(y) = \alpha y^2 + {\rm h.o.t.}$ for
some $\alpha \neq 0$.

Plugging the information $F =1$ and $k >0$ into Equation~(\ref{forcommutation_includingaffinestructure}),
we conclude that
$$
\frac{\partial a}{\partial x} = \alpha k + r(y)
$$
where $r(y) = B (y)/y^2 - \alpha$. Since $a(0,0) =1$, there follows that
$$
a (x,y) = 1 + \alpha k x + xr(y) + s(y)
$$
where $s$ is a holomorphic function satisfying $s(0) =0$. Thus it only remains to check that the resulting
vector field $Y$ is, in fact, semicomplete on a neighborhood of $(0,0) \in \C^2$. For this, we will look for new coordinates
where the foliation {\it associated with $Y$}\, is given by horizontal lines.

Let us thus consider the change of coordinates $(\overline{x}, \overline{y}) \mapsto
(\overline{x}, u(\overline{x}, \overline{y})) = (x,y)$, where $u$ is a holomorphic function,
satisfying the following conditions:
\begin{itemize}
  \item $u(0, \overline{y}) = \overline{y}$.
  \item The change of coordinates send horizontal lines $\{ \overline{y} = {\rm cte} \}$ to leaves
  of the foliation associated with $Y$.
\end{itemize}
It straightforward to check that the vector field $Y$ in the coordinates $(\overline{x}, \overline{y})$
becomes (see proof of Lemma~\ref{addedlater-lemmaXSiegel} for details)
$$
Y = \overline{y} (1 + \beta (\overline{x}, \overline{y})) \partial /\partial \overline{x} \, ,
$$
where $\beta$ is a holomorphic function vanishing at the origin. Since $\overline{y}$ is constant over the horizontal
lines, $Y$ is semicomplete on a neighborhood of the origin if and only if the vector field
$$
(1 + \beta (\overline{x}, \overline{y})) \partial /\partial \overline{x}
$$
is so. This last vector field is however clearly semicomplete since it is regular at
the origin and hence admits a flow-box coordinate defined around $(0,0) \in \C^2$.
Lemma~\ref{finishing_job_regularfoliation-2} is proved.
\end{proof}

We close this section with the proof of Lemma~\ref{addedlater-lemmaXSiegel}

\begin{proof}[Proof of Lemma~\ref{addedlater-lemmaXSiegel}]
Note that the foliation associated with $X$ is again regular at the origin and hence we can again choose coordinates
$(\overline{x}, \overline{y}) \mapsto (\overline{x}, u(\overline{x}, \overline{y})) = (x,y)$
satisfying the conditions:
\begin{itemize}
  \item $u(0, \overline{y}) = \overline{y}$.
  \item The change of coordinates send horizontal lines $\{ \overline{y} = {\rm cte} \}$ to leaves
  of the foliation associated with $Y$.
\end{itemize}
The vector field $X$ then becomes
$$
X = \overline{y}^n \overline{x}^2 (1 + \beta (\overline{x}, \overline{y})) \partial /\partial \overline{x}.
$$
Hence $X$ is semicomplete if and only if the vector field
$$
\overline{x}^2 (1 + \beta (\overline{x}, \overline{y})) \partial /\partial \overline{x}
$$
is so. In turn, this means that for every (small) fixed value $\overline{y}_0$ of $\overline{y}$
the $1$-dimensional vector field
$\overline{x}^2 (1 + \beta (\overline{x}, \overline{y}_0)) \partial /\partial \overline{x}$ is semicomplete.
In particular, its residue at $0 \in \C$ must be zero (Lemma~\ref{one-dimensional_cases}). By setting
$\beta (\overline{x}, \overline{y})) = \sum_{j=0}^{\infty} a_j (y) x^j$, this last condition means that
$a_1 (y)$ is identically zero. In other words, the Taylor series of $u$ at $(0,0)$ contains no monomials
of the form $\overline{x} \, \overline{y}^k$. Conversely, if this condition holds then the
corresponding $1$-dimensional vector field
is conjugate to (a multiple of) $\overline{x}^2 \partial /\partial \overline{x}$ and thus semicomple on some neighborhood
of $0 \in \C$. The domain of definition of the mentioned conjugation can, however,
be made of uniform size as $\overline{y}_0 \rightarrow 0$ so that
$X$ is semicomplete as desired.

The rest of the proof consists of showing that $\beta$ satisfies the above condition if and only if the initial
functions $g_1$ and $g_2$ are as indicated in the statement of Lemma~\ref{addedlater-lemmaXSiegel}. For this we proceed
as follows.

Set
\begin{equation}
\Theta (x,y)  =  \frac{-y^{n+1} g_2 (xy^n)}{1+ nxy^n g_2 (xy^n)}
= -y^{n+1} g_2 (xy^n) [1 - nxy^n g_2 (xy^n) + \cdots] \, . \label{Ultimateform_Theta}
\end{equation}
On the other hand the condition that the leaves of the initial foliation are given by $\{ \overline{y} = {\rm cte} \}$
amounts to the equation
\begin{equation}
\frac{\partial u}{\partial \overline{x}} (\overline{x}, \overline{y}) = \Theta (\overline{x},
u (\overline{x}, \overline{y})) \, .\label{Equation_for_u}
\end{equation}
Next let $u (\overline{x}, \overline{y}) = \sum_{i=1}^{\infty} c_i (\overline{x}) \overline{y}^i$.
The condition $u(0, \overline{y}) = \overline{y}$ then implies $c_1(0) =1$ and $c_i (0) =0$ for $i \geq 2$.
Plugging $u$ into Equation~(\ref{Equation_for_u}), dividing by $\overline{y}$ and then
setting $\overline{y} =0$, we conclude that $c_1'(x) =0$ so that $c_1 (x)$ is constant equal to~$1$.
Thus $u (\overline{x}, \overline{y}) = \overline{y} (1 + \sum_{i=2}^{\infty} c_i (\overline{x}) \overline{y}^{i-1})$
and the coefficients functions $c_i (x)$ can recursively be computed. In particular if
$g_2 (0) \neq 0$, then all coefficients $c_i$ satisfies $c_i'(0) =0$ (and some of them are identically zero).
Thus $u$ does not have monomials of the form $\overline{x} \, \overline{y}^k$ in its Taylor series.
Conversely, if $g_2 (0) \neq 0$,
then $c_{n+1}' (0) \neq 0$ and hence the Taylor series of $u$ contains the monomial $\overline{x} \, \overline{y}^{n+1}$.

Finally in coordinates $(\overline{x}, \overline{y})$, the vector field $X$ becomes
$$
\overline{x}^2 g_1 [\overline{x} (u (\overline{x}, \overline{y}))^n]  (u (\overline{x}, \overline{y}))^n
\partial /\partial \overline{x}
$$
so that
$$
1 + \beta (\overline{x}, \overline{y}_0) =
g_1 [\overline{x} (u (\overline{x}, \overline{y}))^n]  (u (\overline{x}, \overline{y}))^n \, .
$$
Taking into account the formula
$u (\overline{x}, \overline{y})) = \overline{y} (1 + \sum_{i=2}^{\infty} c_i (\overline{x}) \overline{y}^{i-1})$
and the fact that $g_1 (0) =1$, it is easy to check that the absence of monomials of the form $\overline{x} \,
\overline{y}^k$
in the Taylor series of $1 + \beta (\overline{x}, \overline{y}_0)$ is equivalent to the following pair
of conditions:
\begin{enumerate}
  \item $g_1' (0) =0$;

  \item The Taylor series of $u$ at $(0,0)$ contains no monomial of the form $\overline{x} \, \overline{y}^k$.
\end{enumerate}
As previously seen, condition~(2) is satisfied if and only if $g_2 (0) =0$. The lemma follows.
\end{proof}

\section{Vector fields whose associated foliation is of Martinet-Ramis type}\label{resonantfoliations}

To complete the proof of Main Theorem, it only remains to discuss the case of holomorphic vector fields
$X$ and $Y$ forming a rank~$2$ systems of commuting semicomplete vector fields under the following
additional condition: the foliation associated with the vector field $X$ and the foliation associated with the
vector field $Y$ both have Martinet-Ramis singular points at the origin. The remainder of this paper
will be devoted to understanding this situation.

We begin with a general simple lemma.

\begin{lema}
\label{commutinginduced}
Let $\fol_Y$ be the foliation associated with a holomorphic vector field $Y$. Suppose
that $X$ is another holomorphic vector field that commutes with $Y$. Then the local flow of
$X$ induces a (local) $1$-parameter group of automorphisms in the leaf space of
$\fol_Y$.
\end{lema}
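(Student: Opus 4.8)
The plan is to exploit the classical equivalence between the vanishing of the bracket and the invariance of one field under the flow of the other. Concretely, I would first recall that for holomorphic vector fields the identity $[X,Y]=0$ is equivalent to $(\Phi_X^t)_\ast Y = Y$ on the open set where both sides are defined, where $\Phi_X^t$ denotes the local flow of $X$. This is just the interpretation of the Lie bracket as $\frac{d}{dt}\big|_{t=0}(\Phi_X^{-t})_\ast Y$ together with the group property of the flow, which upgrades the infinitesimal statement $[X,Y]=0$ to the finite statement of invariance.

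Granting this, the second step is to observe that a local biholomorphism carrying $Y$ to $Y$ necessarily carries the foliation $\fol_Y$ to itself: it sends integral curves of $Y$ to integral curves of $Y$, hence leaves of $\fol_Y$ to leaves of $\fol_Y$, and it maps the singular set of $\fol_Y$ (the zero set of $Y$, together with its non-isolated components) onto itself. In particular $\Phi_X^t$ permutes the leaves of $\fol_Y$ and fixes the singular locus setwise.

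The third step is to descend this permutation of leaves to the leaf space. Working near a regular point, I would fix a local holomorphic transversal $\Sigma$ to $\fol_Y$ and identify a piece of the leaf space with $\Sigma$. The induced map $\phi^t$ is then defined by the prescription: given $p \in \Sigma$, follow the leaf $L_p$ through $p$ by $\Phi_X^t$ to the leaf $\Phi_X^t(L_p)$ and record the point where this leaf meets $\Sigma$, sliding back to $\Sigma$ by the holonomy of $\fol_Y$ where necessary. Holomorphicity of $\phi^t$ follows from holomorphicity of $\Phi_X^t$ and of the holonomy projection, while the fact that $\phi^t$ depends only on the leaf of $p$, and not on the representative $p$ nor on the auxiliary transversal used along the way, is exactly the content of the leaf-preservation established above. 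Finally, the flow identity $\Phi_X^{s+t}=\Phi_X^s\circ\Phi_X^t$, valid wherever defined, descends to $\phi^{s+t}=\phi^s\circ\phi^t$, so the maps $\{\phi^t\}$ form a local $1$-parameter group of automorphisms of the leaf space.

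The main obstacle is not the algebra but the fact that the leaf space of a singular holomorphic foliation is not, in general, a manifold; thus the delicate point is to make the notion of ``automorphism of the leaf space'' precise and to verify that $\phi^t$ is well defined and holomorphic near the singular locus of $\fol_Y$. I expect the crux to be precisely the well-definedness: showing that the transversal construction is independent of all choices, which as noted reduces to the invariance $(\Phi_X^t)_\ast Y = Y$, and checking that $\phi^t$ extends across the images of the singular set, where a Riemann-type removability argument at the level of the transversal handles the points of the singular locus that fail to be isolated in the ambient surface.
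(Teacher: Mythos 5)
Your proposal is correct and rests on the same key idea as the paper's proof: the identity $[X,Y]=0$ upgrades to commutation of the local flows, $\Phi_X^T\circ \Psi_Y^t = \Psi_Y^t\circ \Phi_X^T$, so that points lying on a common leaf of $\fol_Y$ are carried to points on a common leaf of $\fol_Y$. The only difference is one of scope: the paper explicitly interprets the lemma set-theoretically (remarking that the leaf space carries no a priori structure, so the statement \emph{means} precisely this leaf-preservation property), and hence the transversal/holonomy construction and the removability concerns in your third step go beyond what the paper's proof attempts or needs.
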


\begin{proof}
First we would like to point out that the leaf space of $\fol_Y$
was not yet endowed with any particular structure. In this sense the meaning of the
above statement is precisely the following: suppose that we are given points $x_1, x_2$,
belonging to the same leaf of $\fol_Y$, and consider $T \in \C$ such that $\Phi (T,x_1)$,
$\Phi (T,x_2)$ are both defined, where $\Phi$ denotes the local flow of $X$. Then
$\Phi (T,x_1)$, $\Phi (T,x_2)$ also belong to the same leaf of $\fol_Y$. To check the
claim we note that it has a local nature. Therefore it suffices to prove it for $x_1$
close to $x_2$ and for $T$ small in norm. If $\Psi$ denotes the local flow of $Y$, the
preceding allows us to suppose that $x_2 = \Psi (t, x_1)$ for small $t \in \C$. The
commutativity of $X, \, Y$ then provides
$$
\Phi (T ,x_2) = \Phi (T, \Psi (t, x_1)) = \Psi (t, \Phi (T, x_1))
$$
as long as all the terms in this equation are well defined. We then conclude that
$\Phi (T,x_1)$, $\Phi (T,x_2)$ belong to the same leaf of $\fol_Y$ as desired.
\end{proof}

Throughout this section $X$ and $Y$ will be holomorphic vector fields defined around $(0,0) \in \C^2$ and
satisfying all of the conditions below:
\begin{enumerate}
  \item $X$ and $Y$ form a rank~$2$ system of commuting semicomplete vector fields;

  \item The eigenvalues of $X$ and of $Y$ at $(0,0) \in \C^2$ are all equal to zero;

  \item The foliation $\fol_X$ associated with $X$ and the foliation $\fol_Y$ associated with $Y$ both
  have Martinet-Ramis singular points at $(0,0) \in \C^2$.
\end{enumerate}

Next we have a slightly more specific lemma

\begin{lema}
\label{commonseparatrices}
Let $X,Y$ be as above. Then $\fol_X, \, \fol_Y$ share the same separatrices.
\end{lema}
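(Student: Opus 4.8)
The plan is to exploit the fact that a Martinet–Ramis singularity has exactly two separatrices, which are smooth and cross transversally at the origin (cf.\ \cite{MR-ENS}), together with the fact that the flow of $X$ must permute the separatrices of $\fol_Y$. Concretely, I would argue that the local flow of $X$ fixes \emph{each} separatrix of $\fol_Y$ as a set, and then show that set-invariance under the flow of $X$ forces such a separatrix to be invariant by $\fol_X$ as well. A counting argument then closes the proof: each of the two separatrices of $\fol_Y$ is exhibited among the two separatrices of $\fol_X$, so the two (distinct) pairs must coincide.

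The central step goes as follows. Since $[X,Y]=0$, the local flow $\Phi^T$ of $X$ preserves $\fol_Y$ by Lemma~\ref{commutinginduced}, and it fixes the origin because $X(0,0)=0$ (both eigenvalues of $X$ vanish). Hence, for a separatrix $S$ of $\fol_Y$ — an irreducible analytic curve through the origin invariant by $\fol_Y$ — the image $\Phi^T(S)$ is again an irreducible analytic curve through the origin invariant by $\fol_Y$, i.e.\ once more a separatrix of $\fol_Y$. Because there are only two separatrices and they have distinct tangent directions at the origin, while the tangent direction of $\Phi^T(S)$ at the origin depends continuously (indeed holomorphically) on $T$ and agrees with that of $S$ at $T=0$, it must stay equal to the direction of $S$ for all small $T$. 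Thus $\Phi^T(S)=S$, and each separatrix of $\fol_Y$ is invariant under the flow of $X$.

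Finally, I would translate this set-invariance into an invariance statement for $\fol_X$. Choosing coordinates in which $S=\{y=0\}$ and writing $X = P\,\ddx + Q\,\ddy$, invariance of $S$ under the flow of $X$ means exactly that $X$ is tangent to $S$ along $S$, i.e.\ $Q(x,0)=0$; this is precisely the statement that $\{y=0\}$ is invariant by the foliation $\fol_X$, so $S$ is a separatrix of $\fol_X$. Applying this to both separatrices of $\fol_Y$ and invoking that $\fol_X$ has exactly two separatrices yields the desired equality of the separatrix sets (a symmetric argument with the roles of $X$ and $Y$ exchanged serves as a consistency check).

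The step I expect to be the main obstacle is guaranteeing that each separatrix is preserved \emph{individually} rather than merely permuted, which is handled by the transversality of the two separatrices plus the continuity of $T\mapsto\Phi^T(S)$. A related delicate point is that the vector field $X$ may vanish identically along a separatrix of $\fol_Y$ when $X$ carries a curve of zeros; in that situation the naive reasoning ``a nonzero orbit of $X$ inside $S$ is tangent to $S$'' is unavailable. The virtue of phrasing the conclusion through set-invariance is that it sidesteps this issue entirely: invariance of the set $S$ under the flow of $X$ encodes the tangency condition $Q(x,0)=0$ directly, independently of whether $X$ is identically zero on $S$, so the argument remains valid in the non-isolated case.
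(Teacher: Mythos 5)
Your opening step is sound and coincides with the paper's: since $[X,Y]=0$ and both vector fields vanish at the origin, the local flow of $X$ sends separatrices of $\fol_Y$ to separatrices of $\fol_Y$, and since there are exactly two of them, with distinct tangent directions, continuity in $T$ forces each to be preserved individually. The gap lies in your ``translation'' step, and precisely at the point you believed you had sidestepped. Writing $X = P\,\ddx + Q\,\ddy$ in coordinates where the separatrix is $S=\{y=0\}$, flow-invariance of $S$ does give $Q(x,0)\equiv 0$; but this is \emph{not} the statement that $S$ is invariant by $\fol_X$. The foliation $\fol_X$ is defined by $X$ \emph{divided by the greatest common divisor} $h$ of $P$ and $Q$, and $S$ is $\fol_X$-invariant if and only if the $\ddy$-component of $X/h$ vanishes identically on $S$. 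When $X$ vanishes identically on $S$ (equivalently, when $y$ divides $h$), the condition $Q(x,0)\equiv 0$ is automatic and carries no information about $\fol_X$ whatsoever: flow-invariance of a curve of fixed points is trivial. Table~\ref{TheTable} shows this situation is a genuine threat, not a pathology: item~13, namely $x^n y^n (x-y) f\,[x \ddx - y\ddy]$, is a semicomplete vector field with zero eigenvalues and Martinet-Ramis foliation which vanishes identically on $\{x=y\}$; that curve is pointwise fixed by the flow, hence flow-invariant, yet it is \emph{not} invariant by the associated foliation $x\ddx - y\ddy$. So your final equivalence breaks exactly in the non-isolated case you flagged, and your argument proves the lemma only under the extra hypothesis that neither vector field vanishes identically on a separatrix of the other's foliation.

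That excluded configuration is, in fact, the entire content of the paper's proof. There, one assumes a separatrix $S_{1,X}$ of $\fol_X$ is not $\fol_Y$-invariant; flow-invariance then forces $Y$ to vanish identically on $S_{1,X}$, which by Table~\ref{TheTable} pins $Y$ to the item-13 normal form with $S_{1,X}=\{x=y\}$. The rank-$2$ hypothesis then enters in an essential way, which your sketch never exploits: since $S_{1,X}$ is transverse to $\fol_Y$ at generic points, it parameterizes an open set of the leaf space of $\fol_Y$, so if $X$ vanished identically on it, the automorphisms induced by $X$ on that leaf space (Lemma~\ref{commutinginduced}) would be trivial and $X$, $Y$ would be everywhere parallel. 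This, together with the symmetric argument, reduces the zero sets of $X$ and $Y$ to single curves and forces their quadratic parts to be, up to constants, $x[(x-2y)\ddx - y\ddy]$ and $(x-y)[x\ddx - y\ddy]$. Since the lowest-order homogeneous components of commuting vector fields must themselves commute, and these two quadratic fields do not, one reaches the contradiction. To repair your proof you would need to supply this entire branch of the argument; the set-invariance formulation does not avoid it.
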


\begin{proof}
Since the origin is a Martinet-Ramis singular point of $\fol_X$ (resp. $\fol_Y$),
there follows that $\fol_X$ (resp. $\fol_Y$) possesses exactly two irreducible separatrices,
see \cite{mattei}  or \cite{MR-ENS}.
In addition these separatrices are smooth and mutually transverse. Let then $S_{1,X}$ be one of
the separatrices of $\fol_X$. Since the origin is a fixed point of the (semi-global) flow $\Phi_Y$ of $Y$,
there immediately follows that $\Phi_Y^T (S_{1,X})$ is still a germ of analytic curve passing through
$(0,0)$ which is invariant under the flow $\Phi_X$ of $X$. Up to choosing $T$ small enough, we conclude
that $\Phi_Y$ must preserve $S_{1,X}$. We then need to prove that $S_{1,X}$ is a separatrix of $\fol_Y$ as well.

Assume aiming at a contraction that $S_{1,X}$ is not a separatrix for $\fol_Y$. Since $S_{1,X}$ must be invariant by
$\Phi_Y$, it follows that $Y$ has zeros over $S_{1,X}$. In view of Table~\ref{TheTable}, this last assertion implies
the existence of local coordinates $(x,y)$ where
$$
Y = x^n y^n (x-y) f [x \ddx - y\ddy]
$$
with $f(0,0) \neq 0$, $n \in \mathbb{N}$, and $S_{1,X} = \{ x=y\}$. Conversely, $X$ must also have zeros over one of
the separatrices $S_{1,Y}$ of $\fol_Y$ which, in the above coordinates, can be chosen as $\{ x=0\}$ (and $S_{1,Y} =
\{ x=0\}$ is not a separatrix for the foliation associated with $X$).
Since the line $S_{1,X} =\{ x=y\}$ is transverse
to $\fol_Y$ (at generic points), it parameterizes an open set of the leaf space of $\fol_Y$. Thus $X$ cannot vanish
identically over $S_{1,X} = \{x=y\}$, otherwise the automorphism induced by $X$ on the leaf space of $\fol_Y$ would be trivial
which, in turn, would contradict the fact that $X$ and $Y$ are linearly independent at generic points (cf. condition~(1)).
In other words, the vector field $X$ does not vanish identically on one of the separatrices of its associated
foliation. Table~\ref{TheTable} then ensures that
the zero-set of $X$ consists solely of the line $\{ x=0\}$. Similarly the zero-set of $Y$ is reduced to the line
$\{ x=y\}$ so that $n=0$. Thus the first non-zero (i.e. quadratic) homogeneous components of the Taylor series of $X$
and of $Y$ are respectively given by
$$
x [(x-2y) \ddx - y\ddy] \; \; \; \; {\rm and} \; \; \; \; (x-y)[ x\ddx -y \ddy] \; .
$$
A direct inspection however shows that these quadratic vector fields do not commute and this contradicts the fact
that $[X,Y] =0$. The lemma is proved.
\end{proof}

Lemma~\ref{commonseparatrices} can be made slightly more accurate as follows:

\begin{lema}
\label{existence_firstcoordinates}
Assume that $X$ and $Y$ satisfy conditions~(1), (2), and~(3) above. Then there are coordinates $(x,y)$ where
the separatrices of $\fol_X$ and of $\fol_Y$ are given by the union of the axes $\{ x=0\}$ and $\{ y=0\}$.
Furthermore the zero-set of either $X$ or $Y$ is contained in the union of these (common) separatrices.
\end{lema}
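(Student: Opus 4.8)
The plan is to produce the coordinates in two stages: first straighten the common separatrices onto the axes, then show that the zero-sets cannot escape them. For the first assertion I would simply combine Lemma~\ref{commonseparatrices} with the local structure of Martinet-Ramis singularities. By Lemma~\ref{commonseparatrices} the foliations $\fol_X$ and $\fol_Y$ share their separatrices, and since the origin is a Martinet-Ramis point of each, each foliation has exactly two irreducible separatrices, both smooth and mutually transverse (as recalled in the proof of Lemma~\ref{commonseparatrices}, cf.\ \cite{mattei}, \cite{MR-ENS}). Two smooth, mutually transverse germs of curve through the origin are simultaneously straightened onto the coordinate axes by taking reduced equations of the two curves as the new coordinates $x$ and $y$; in these coordinates the common separatrices are exactly $\{x=0\}\cup\{y=0\}$, which disposes of the first part.

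For the second assertion I would write $X=g\,Z_X$, where $g$ is the greatest common divisor of the two components of $X$ and $Z_X$ is holomorphic with an isolated singularity at the origin generating $\fol_X$. Then the zero-set of $X$ equals $\{g=0\}$, a finite union of germs of curve through the origin, and the task reduces to showing that each of these components is one of the axes, i.e.\ $\{g=0\}\subset\{xy=0\}$. Running the same argument for $Y$ will in fact give the stronger conclusion that the zero-sets of \emph{both} $X$ and $Y$ lie in the common separatrices, which obviously implies the stated alternative.

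The heart of the matter, and the step I expect to be the main obstacle, is to rule out a component $C$ of the zero-set of $X$ that is not contained in $\{xy=0\}$; this is precisely the behaviour of item~13 of Table~\ref{TheTable}, whose vector field vanishes along the transverse line $\{x=y\}$. Assuming such a $C$ exists, it is an irreducible germ of curve through the origin that is not an axis, hence not a separatrix of the Martinet-Ramis foliation $\fol_Y$; it is therefore transverse to $\fol_Y$ at its generic points and meets an open set of leaves of $\fol_Y$. Since $X$ vanishes identically along $C$, its local flow fixes $C$ pointwise, so for a generic nearby point $q$, letting $p$ be the point where the $\fol_Y$-leaf through $q$ meets $C$, the relation $\Phi_X^T(p)=p$ together with Lemma~\ref{commutinginduced} forces $\Phi_X^T(q)$ to stay on the leaf through $q$. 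Thus the flow of $X$ preserves every leaf of $\fol_Y$, i.e.\ $X$ is tangent to $\fol_Y$; being also tangent to $\fol_X$, it would make the two tangent directions coincide at every generic point, hence $\fol_X=\fol_Y$, and then $X$ and $Y$ would be everywhere parallel, contradicting the rank~$2$ hypothesis. Consequently $\{g=0\}\subset\{xy=0\}$, and the symmetric argument for $Y$ finishes the proof. The delicate point is exactly this conversion of ``$X$ vanishes on a curve transverse to $\fol_Y$'' into ``$X$ is tangent to $\fol_Y$'': it is here that Lemma~\ref{commutinginduced} and the rank~$2$ condition combine to eliminate the Martinet-Ramis vector fields carrying a transverse curve of zeros.
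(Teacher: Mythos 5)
Your proposal is correct and follows essentially the same route as the paper: the first part is the identical combination of Lemma~\ref{commonseparatrices} with the Martinet-Ramis normal form, and the second part rests on the paper's own key mechanism, namely that a curve of zeros of $X$ transverse to $\fol_Y$ would trivialize the action induced by $X$ on the leaf space of $\fol_Y$, contradicting the rank~$2$ hypothesis. The only difference is presentational: you unpack, via Lemma~\ref{commutinginduced} and the fixed points of $\Phi_X^T$ along the zero curve, the step the paper dispatches with ``as previously seen'' (referring to the same argument inside the proof of Lemma~\ref{commonseparatrices}), and you argue directly for any component of the zero-set rather than invoking item~13 of Table~\ref{TheTable} as the only case to exclude.
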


\begin{proof}
The existence of coordinates $(x,y)$ such that the coordinates axes $\{ x=0\}$ and $\{ y=0\}$ correspond
to the separatrices of $\fol_X$ (resp. $\fol_Y$) is an immediate consequence of Lemma~\ref{commonseparatrices} combined
with the well-known general normal form of foliations having Martinet-Ramis singular points (cf. \cite{mattei}  or \cite{MR-ENS}).

To show that the zero-set of $X$ (resp. $Y$) is contained in the union of the corresponding coordinates
axes, it suffices to rule out the possibility of having $X$ (resp. $Y$) as in item~13 of
Table~\ref{TheTable}. However, if $X$ were as in item~13 of Table~\ref{TheTable}, it would vanish identically
over a curve $C$ which is not a separatrix for $\fol_Y$. As previously seen, this implies that the automorphism
induced by $X$ in the leaf space
of $\fol_Y$ is trivial contradicting the condition~(1).
This establishes the lemma.
\end{proof}

What precedes can be summarized by claiming the existence of local coordinates $(x,y)$ where
the vector field $X$ takes on the form
\begin{equation}
X = x^a y^b f(x,y) [mx (1 + {\rm h.o.t.}) \partial /\partial x - ny (1 + {\rm h.o.t.}) \partial /\partial
y] \label{generalnormalform_X}
\end{equation}
where $m,n \in \N^{\ast}$, $f(0,0) \neq 0$, and $a,b \in \N$ with at least one between $a$ and $b$
different from zero (since the eigenvalues
of $X$ at $(0,0)$ are both zero). Similarly, we also have
\begin{equation}
Y = x^{a^*}y^{b^*} f^{\ast} [m^* x (1 + {\rm h.o.t.}) \partial /\partial x - n^* y (1 + {\rm h.o.t.}) \partial /\partial y]
\label{generalnormalform_Y} \, .
\end{equation}
Here again $f^{\ast} (0,0) \neq 0$, $m^{\ast},n^{\ast} \in \N^{\ast}$, and
$a^{\ast},b^{\ast} \in \N$ with at least one of them different from zero.

Recall also that $m,n,a,b$ (resp. $m^{\ast},n^{\ast}, a^{\ast},b^{\ast}$) are bound by the relation
$am-bn \in \{-1, 0, 1\}$ (resp. $a^{\ast}m^{\ast}-b^{\ast} n^{\ast} \in \{-1, 0, 1\}$). Furthermore,
$\fol_X$ (resp. $\fol_Y$) is necessarily linearizable if $am -bn \neq 0$ (resp.
$a^{\ast}m^{\ast}-b^{\ast} n^{\ast} \neq 0$).

Finally note that $\fol_X$ and $\fol_Y$ share the same eigenvalues if and only if $m/m^{\ast} = n/n^{\ast}$.
In fact, the eigenvalues of a foliation are defined only up to multiplicative constants.

%\begin{equation}
%x^a y^b u(x,y) [mx (1 + {\rm h.o.t.}) \partial /\partial x - ny (1 + {\rm h.o.t.}) \partial /\partial
%y] \label{generalnormalform}
%\end{equation}
%where $a,b,m, n \in \N^{\ast}$ verify $am-bn \in \{-1, 0, 1\}$ and where $u$ is a holomorphic function such that
%$u(0,0) \neq 0$. In fact, in the case where $am-bn \in \{-1, 1\}$, the foliation associated to $X$ is always
%linearizable. Thus we can say that the vector field $X$ admits one of the following normal forms
%\begin{eqnarray}
%X & = & u(x,y) (x^ny^m)^k [mx (1 + {\rm h.o.t.}) \partial /\partial x - ny (1 + {\rm h.o.t.}) \partial /\partial
%y] \label{generalnormalform1} \\
%X & = & u(x,y) x^a y^b [mx \partial /\partial x - ny  \partial /\partial y] \label{generalnormalform2}
%\end{eqnarray}
%where $k \in \N^{\ast}$ and $am-bn \in \{-1, 1\}$. Naturally there are analogous conclusions for the vector field $Y$.

For $X$ (resp. $Y$) as above, let $X^H$ (resp. $Y^H$) denote the first non-zero homogeneous component of the
Taylor series of $X$ (resp. $Y$) at the origin. In other words, up to multiplicative constants, we set
\begin{equation}
X^H = x^ay^b [mx \partial /\partial x - ny  \partial /\partial y] \; \; {\rm and} \; \;
Y^H = x^{a^*}y^{b^*} [m^* x \partial /\partial x - n^* y  \partial /\partial y] \, .
\label{X-H_and_Y-H}
\end{equation}
Now we have:

\begin{lema}
\label{lemmaadded1}
Assume that $X, \, Y$ satisfy conditions~(1), (2), and~(3) at the beginning of this section.
If the eigenvalues of $\fol_X, \, \fol_Y$ do not coincide then $x^ay^b$ is a first integral for
the linear foliation induced by $Y^H$. Similarly, $x^{a^*}y^{b^*}$
is a first integral for the linear foliation induced by $X^H$.
\end{lema}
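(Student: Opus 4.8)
The plan is to extract the leading homogeneous behaviour of the identity $[X,Y]=0$ and turn it into a linear relation among the exponents. First I would record the principal (lowest-degree, nonzero) homogeneous parts of $X$ and $Y$: by the normal form~(\ref{generalnormalform_X}) together with $f(0,0)\neq 0$, the principal part of $X$ is $f(0,0)\,X^H$, a vector field homogeneous of degree $a+b+1$; likewise, by~(\ref{generalnormalform_Y}) with $f^{\ast}(0,0)\neq 0$, the principal part of $Y$ is $f^{\ast}(0,0)\,Y^H$, homogeneous of degree $a^{\ast}+b^{\ast}+1$. Since the Lie bracket of homogeneous vector fields of degrees $p$ and $q$ is homogeneous of degree $p+q-1$, the only contribution to $[X,Y]$ in degree $a+b+a^{\ast}+b^{\ast}+1$ is $f(0,0)f^{\ast}(0,0)\,[X^H,Y^H]$, every other term (built from a strictly higher part of $X$ or of $Y$) living in strictly higher degree. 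As $[X,Y]=0$, this forces $[X^H,Y^H]=0$.

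The next step is to compute this bracket explicitly. Writing $X^H=x^ay^b R$ and $Y^H=x^{a^{\ast}}y^{b^{\ast}}S$ with the diagonal linear fields $R=mx\ddx-ny\ddy$ and $S=m^{\ast}x\ddx-n^{\ast}y\ddy$, I would apply the Leibniz-type identity $[gR,hS]=gh[R,S]+g(R\cdot h)S-h(S\cdot g)R$. Here $[R,S]=0$ because diagonal linear vector fields commute, while $R\cdot x^{a^{\ast}}y^{b^{\ast}}=(ma^{\ast}-nb^{\ast})\,x^{a^{\ast}}y^{b^{\ast}}$ and $S\cdot x^ay^b=(m^{\ast}a-n^{\ast}b)\,x^ay^b$. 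This gives
\[
[X^H,Y^H]=x^{a+a^{\ast}}y^{b+b^{\ast}}\,\big[(ma^{\ast}-nb^{\ast})\,S-(m^{\ast}a-n^{\ast}b)\,R\big]\,.
\]
Setting $P=ma^{\ast}-nb^{\ast}$ and $Q=m^{\ast}a-n^{\ast}b$ and collecting the coefficients of $x\ddx$ and of $y\ddy$, the vanishing of $[X^H,Y^H]$ becomes the homogeneous linear system $Pm^{\ast}-Qm=0$ and $Pn^{\ast}-Qn=0$ in the unknowns $P,Q$.

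Finally I would invoke the hypothesis. The determinant of this system is $mn^{\ast}-m^{\ast}n$, which is nonzero precisely when the eigenvalues of $\fol_X$ and $\fol_Y$ do not coincide (i.e.\ $m/m^{\ast}\neq n/n^{\ast}$). Hence $P=Q=0$. The relation $Q=m^{\ast}a-n^{\ast}b=0$ says exactly that $S\cdot(x^ay^b)=0$, that is, $x^ay^b$ is a first integral of the linear foliation induced by $Y^H$; symmetrically, $P=ma^{\ast}-nb^{\ast}=0$ says that $x^{a^{\ast}}y^{b^{\ast}}$ is a first integral of the linear foliation induced by $X^H$, which is the assertion of the lemma.

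I expect the bracket computation itself to be routine; the point that must be handled with care is the degree bookkeeping in the first paragraph, namely the verification that at the critical degree $a+b+a^{\ast}+b^{\ast}+1$ no term other than $[X^H,Y^H]$ survives. This is what legitimately reduces the full commutation relation to the purely linear-algebraic condition on the exponents, and it rests on the normal forms~(\ref{generalnormalform_X})--(\ref{generalnormalform_Y}) together with $f(0,0),f^{\ast}(0,0)\neq 0$ guaranteeing that $X^H$ and $Y^H$ are genuinely the principal parts of $X$ and $Y$.
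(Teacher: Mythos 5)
Your proof is correct and takes essentially the same route as the paper: both reduce $[X,Y]=0$ to $[X^H,Y^H]=0$ by isolating the lowest-degree homogeneous component, and both then solve the resulting linear relations among the exponents using that the non-coincidence of eigenvalues means $mn^{\ast}-m^{\ast}n\neq 0$. The only (cosmetic) difference is that the paper solves its system by substitution and expresses the conclusion as $b/a=m^{\ast}/n^{\ast}$, $b^{\ast}/a^{\ast}=m/n$, whereas you package the same relations as a homogeneous linear system in $P=ma^{\ast}-nb^{\ast}$, $Q=m^{\ast}a-n^{\ast}b$ with nonzero determinant, which yields the first-integral conditions $P=Q=0$ directly.
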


\begin{proof}
Since $X$ and $Y$ commute, the homogeneous vector fields $X^H, \, Y^H$ must commute as well. Therefore,
we have
\begin{equation}\label{cond_vp}
\begin{cases}
b - b^* = \frac{m^*}{n^*} a - \frac{m}{n} a^* \, , \\
a - a^* = \frac{n^*}{m^*} b - \frac{n}{m} b^* \, .
\end{cases}
\end{equation}
Solving the second equation for $a$ and substituting it in the first one, we obtain
\[
\frac{nm^* - mn^*}{mn^*} b^* = \frac{nm^* - mn^*}{nn^*} a^* \, .
\]
This equation is verified if and only if
\[
nm^* - mn^* = 0 \quad \text{or} \quad b^* = \frac{m}{n} a^* \, .
\]
Assuming that the eigenvalues of $X^H, \, Y^H$ do not coincide, we must have the second situation, i.e.
$b^* = ma^*/n$. Substituting now the expression of $b^*$ in the second equation, the system~(\ref{cond_vp})
becomes equivalent to
\begin{equation}
\begin{cases}\label{cond_vp_simp}
b^* = \frac{m}{n} a^* \, , \\
b = \frac{m^*}{n^*} a \, .
\end{cases}
\end{equation}
By assumption $a$ and $b$ (resp. $a^*$ and $b^*$) cannot simultaneously be equal to zero. Thus the above
equations imply that all the constants $a, b, a^{\ast}, b^{\ast}$ are
different from zero. Therefore the preceding conditions can be reformulated as
\begin{equation}\label{mnrs}
\frac{b}{a} = \frac{m^*}{n^*} \quad {\rm and} \quad \frac{b^*}{a^*} = \frac{m}{n} \, .
\end{equation}
The lemma follows.
\end{proof}

An immediate consequence of what precedes reads as follows.

\begin{coro}\label{corol_linearizable}
If the eigenvalues of $\fol_X, \, \fol_Y$ do not coincide, then $\fol_X, \, \fol_Y$ are linearizable
(though not necessarily linearizable in the same coordinate).
\end{coro}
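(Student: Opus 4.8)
The plan is to read the corollary off directly from Lemma~\ref{lemmaadded1} together with the linearizability criterion recalled in the paragraph following Equations~(\ref{generalnormalform_X}) and~(\ref{generalnormalform_Y}): namely, that $\fol_X$ (resp. $\fol_Y$) is necessarily linearizable as soon as $am-bn \neq 0$ (resp. $a^{\ast}m^{\ast}-b^{\ast}n^{\ast} \neq 0$), bearing in mind that, because $X$ and $Y$ are semicomplete, these two integers always lie in $\{-1,0,1\}$. Thus it suffices to verify that $am-bn$ and $a^{\ast}m^{\ast}-b^{\ast}n^{\ast}$ are both nonzero under the standing assumption that the eigenvalues of $\fol_X$ and of $\fol_Y$ do not coincide.

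First I would restate the eigenvalue hypothesis in arithmetic form. Since the eigenvalues of $\fol_X$ are $m,-n$ and those of $\fol_Y$ are $m^{\ast},-n^{\ast}$ (each defined only up to a common multiplicative constant), the assumption that they do not coincide means precisely $m/n \neq m^{\ast}/n^{\ast}$, equivalently $mn^{\ast} \neq nm^{\ast}$. Next I would invoke Lemma~\ref{lemmaadded1}, which under exactly this hypothesis already yields the relations~(\ref{mnrs}), namely $b/a = m^{\ast}/n^{\ast}$ and $b^{\ast}/a^{\ast} = m/n$, together with the fact that all four exponents $a,b,a^{\ast},b^{\ast}$ are nonzero, so that the ratios above are well defined.

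It then remains only to combine these observations. Since $a \neq 0$, the condition $am-bn = 0$ is equivalent to $b/a = m/n$; but $b/a = m^{\ast}/n^{\ast}$ by~(\ref{mnrs}), whereas $m^{\ast}/n^{\ast} \neq m/n$ by the eigenvalue hypothesis, so $am-bn \neq 0$ and $\fol_X$ is linearizable. The argument for $\fol_Y$ is entirely symmetric: from $b^{\ast}/a^{\ast} = m/n \neq m^{\ast}/n^{\ast}$ one gets $a^{\ast}m^{\ast}-b^{\ast}n^{\ast} \neq 0$, whence $\fol_Y$ is linearizable as well. I expect no genuine obstacle here, the corollary being essentially a formal restatement of Lemma~\ref{lemmaadded1}; the only point deserving care is to keep straight which ratio records which foliation's eigenvalues, since the relations~(\ref{mnrs}) cross the roles of $X$ and $Y$. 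The parenthetical caveat in the statement—that the two foliations need not be linearizable in the same coordinate—requires no argument, as linearizability is asserted separately for each of them.
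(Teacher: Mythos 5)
Your proposal is correct and follows essentially the same route as the paper: both arguments combine the relations~(\ref{mnrs}) from Lemma~\ref{lemmaadded1} with the criterion that $am-bn \neq 0$ (resp. $a^{\ast}m^{\ast}-b^{\ast}n^{\ast} \neq 0$) forces linearizability, the only difference being that you phrase the step directly while the paper argues by assuming $am-bn=0$ and deriving that the eigenvalues coincide, a contradiction.
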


\begin{proof}
Let $X, \, Y$ as above and assume that $am - bn = 0$. Then we must have
$m/n = b/a = m^*/n^*$ so that $\fol_X$ and $\fol_Y$ have the same eigenvalues at the origin. Thus we must
have $am - bn \in \{-1, 1\}$ and hence $\fol_X$ is linearizable. An analogous argument shows that $\fol_Y$
must be linearizable as well.
\end{proof}

Lemma~\ref{eliminatinginvertiblefactor-f} below provides us with sufficient conditions to ensure that the invertible
multiplicative functions $f$ becomes constant in suitable coordinates. This lemma will play an important role
in the subsequent discussion.

\begin{lema}
\label{eliminatinginvertiblefactor-f}
Consider vector fields $X, \, Y$ satisfying conditions~(1), (2), and~(3) at the beginning of this section.
Assume that the foliation $\fol_X$ is linearizable. Then there are local coordinates $(x,y)$ where $X$ becomes
$$
x^a y^b [mx \partial /\partial x - ny  \partial /\partial y]
$$
with $am-bn \in \{-1, 0, 1\}$.
\end{lema}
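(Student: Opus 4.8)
The plan is to linearize $\fol_X$ and then to absorb the invertible factor by a coordinate change adapted to the first integral. Since $\fol_X$ is linearizable, Lemmas~\ref{commonseparatrices} and~\ref{existence_firstcoordinates} let me fix coordinates $(x,y)$ in which $\fol_X$ is generated by the linear field $R=mx\ddx-ny\ddy$, the two common separatrices are the coordinate axes, and $X=x^ay^b\,g(x,y)\,R$ with $g$ invertible, $g(0,0)=1$, and $am-bn=\varepsilon\in\{-1,0,1\}$ (cf. Equation~(\ref{generalnormalform_X})). Here $u:=x^ny^m$ is a first integral of $R$, hence of $X$. The key observation is that the coordinate changes $(x,y)\mapsto(x\,\phi(u),\,y\,\psi(u))$, with $\phi,\psi$ units, preserve both the axes and the generator $R$ — one checks $R(x\phi)=mx\phi$ and $R(y\psi)=-ny\psi$ because $R(u)=0$ — while they divide the multiplicative factor by $\phi^a\psi^b$. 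Since at least one of $a,b$ is nonzero, any factor that is a unit depending only on $u$ can be reduced to the constant~$1$. Hence it suffices to prove that $g$ is constant along the leaves of $\fol_X$, that is, that $g=g(u)$.

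To establish $g=g(u)$ I would bring in the commuting field $Y$ together with the semicompleteness of the pair. By Lemma~\ref{commutinginduced} the local flow of $Y$ induces a holomorphic flow on the leaf space of $\fol_X$, and this flow is nontrivial because $X$ and $Y$ are independent at generic points; since $[X,Y]=0$, the same flow carries $X$ to $X$, so it realizes biholomorphisms between the leaves $L_c$ that conjugate the restrictions $X|_{L_c}$ to one another as $c$ ranges over an open set of the leaf space. In parallel, semicompleteness constrains each restriction: on the separatrices the residue of $X$ must vanish (Lemma~\ref{one-dimensional_cases}), and the induced time-form has a well-defined period on the (annular) regular leaves. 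The strategy is then to compare $X$ with the model $Z=x^ay^b R$ leafwise — both restrictions being semicomplete with the same vanishing residues — and to patch the resulting foliated conjugacies into a genuine change of coordinates, exactly as in the proof of Proposition~\ref{non_invertible_function}, using the transverse parameter furnished by $Y$ and Riemann's extension theorem across the separatrices.

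The hard part will be this second step, and specifically the passage from \emph{the leaves are mutually isomorphic as $X$-curves} to \emph{$g$ is leafwise constant}. Equality of periods and abstract isomorphism of the annular leaves do not by themselves determine $g$ pointwise; the required rigidity has to come from the fact that the identifications are realized by the genuinely two-dimensional holomorphic flow of $Y$, varying holomorphically with the transverse coordinate, together with the shared Martinet--Ramis/separatrix structure (this is exactly where the hypothesis that $X$ admits a commuting partner is indispensable, since for a single vector field the factor in item~12 of Table~\ref{TheTable} cannot in general be made constant). Concretely, I expect to read the relevant transverse data off the blown-up separatrices — the affine/holonomy structure developed in \cite{GuR} — and to match the action induced there by $Y$, forcing the period of $X$ to be independent of the leaf and the leafwise variation of $g$ to be trivial. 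Once this transverse matching is in place, the three values $\varepsilon\in\{-1,0,1\}$ should be handled uniformly, completing the reduction of $X$ to $x^ay^b[mx\ddx-ny\ddy]$.
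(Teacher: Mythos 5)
Your opening reduction is correct and even elegant: the changes $(x,y)\mapsto(x\,\phi(u),\,y\,\psi(u))$, $u=x^ny^m$, do preserve the axes and the linear generator and rescale the unit factor by $\phi^a\psi^b$, so a unit factor that is a function of $u$ alone can be absorbed. The problem is what you reduce \emph{to}: the claim ``$g=g(u)$ in the given linearizing coordinates'' is false in general, so no argument can establish it. All the hypotheses on the pair $(X,Y)$ are coordinate-invariant, whereas ``the factor is a first integral'' is not: start from the model $Z=x^ay^b[mx\,\partial/\partial x-ny\,\partial/\partial y]$ itself (which satisfies every hypothesis, together with a suitable $Y$) and apply the foliation-preserving change $\tilde x=x\lambda^m$, $\tilde y=y\lambda^{-n}$ with, say, $\lambda=1+x$. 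This preserves $u=x^ny^m$, hence every leaf, and one computes that in the new (still linearizing) coordinates $Z=\tilde x^a\tilde y^b\,\lambda^{nb-am}\bigl(1+R(\lambda)/\lambda\bigr)\,[m\tilde x\,\partial/\partial\tilde x-n\tilde y\,\partial/\partial\tilde y]$, whose factor is a nonconstant function of $\tilde x$ alone --- not of $\tilde u$. So what has to be proved is the \emph{existence} of linearizing coordinates in which the factor becomes a first integral (equivalently, constant), i.e.\ the existence of a conjugacy onto the model; your step~1 therefore does not shorten the path, and the entire weight of the lemma falls on your second step.

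That second step, which you yourself flag as the hard part, is left as an expectation rather than carried out, and it is exactly where the paper's proof has its content. Concretely missing: (i) the reduction to trivial holonomy, $m=n=1$, by lifting through the ramified covering $(x,y)\mapsto(x^m,y^n)$; (ii) the period dichotomy --- the period of $X$ over the generator of the fundamental group of each (annular) leaf vanishes automatically, by semicompleteness, precisely when $am-bn=\pm1$, and never vanishes when $am-bn=0$; (iii) the well-definedness of the patched conjugacy: one sets $H=\Phi_{\mathcal{Z}}(-T,\Phi_X(T,\cdot))$, flowing by $X$ to a fixed transversal $\Sigma$ and back by the model $\mathcal{Z}$, and the ambiguity in $T$ (a multiple of the period) is harmless either because the periods vanish (case $am-bn=\pm1$) or because they are the \emph{same nonzero constant} for $X$ and for $\mathcal{Z}$ after scaling; the extension across the axes is then Riemann's theorem, cf.\ \cite{mattei}. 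Point (iii) is the only place where $Y$ is needed at all: when $am-bn=0$ the time-$t$ maps of $Y$ permute the leaves of $\fol_X$ while conjugating the restrictions of $X$, forcing all periods to be equal --- you do gesture at this, but without the dichotomy (ii) you cannot see that this is needed \emph{only} in the trace-zero case. Indeed your closing guess that the three values of $am-bn$ ``should be handled uniformly'' is the opposite of what happens: for $am-bn=\pm1$ semicompleteness of $X$ alone suffices, while $am-bn=0$ genuinely requires the commuting partner.
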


\begin{proof}
Under the assumption that $\fol_X$ is linearizable, the vector field $X$ can be written as
$X=x^a y^b f(x,y)[mx \partial /\partial x - ny  \partial /\partial y]$, with $am-bn \in \{-1, 0, 1\}$,
and where $f$ is a holomorphic function satisfying $f(0,0) \neq 0$. The statement means that
the coordinates $(x,y)$ can be chosen so as to have, in addition, the function $f$ equal to a constant.

The existence of the desired coordinates $(x,y)$ will be established by constructing
a local diffeomorphism taking $X$ to the vector
field $\mathcal{Z} = x^a y^b [mx \partial /\partial x - ny  \partial /\partial y]$. In the sequel, the linear foliation
associated with $\mathcal{Z}$ will be denoted by $\fol$.

Consider a local section $\Sigma$ through a point $x_0$ of $\{y=0\}$ which is transverse to both $\fol_X$ and
$\fol$. The local holonomy
maps arising from a small path contained in $\{y=0\}$ and encircling the origin are finite in
both cases. In fact, the holonomy map arising from the foliation $\fol_X$ is conjugate to the
holonomy map arising from $\fol$. In particular, the fundamental group of the leaves of $\fol_X$ (resp. $\fol$)
is cyclic infinite. This allows us to talk about a {\it period}\, for the vector field $X$ (resp. $Z$)
as follows: consider a path contained in $\{y=0\}$ and
winding around the origin the order of the corresponding holonomy map (namely $n$ times). Given
a leaf $L$ of $\fol_X$ (resp. $\fol$), denote by $c_L$ the lift of the mentioned
path in $L$. Clearly the path $c_L$ is a generator of the fundamental group of $L$. The {\it period}\, of $L$
with respect to $X$ (resp. $Z$) is then the value of the integral of $dT_L$ over $c_L$, where
$dT_L$ stands for the time-form induced by $X$ (resp. $\mathcal{Z}$) on $L$. The reader will easily check
that these periods are always zero for semicomplete vector fields $X$ such that $am-bn \in \{-1, 1\}$.
Conversely they are non-zero if $am - bn =0$.

To prove the existence of a diffeomorphism taking $\mathcal{Z}$ to $X$, it suffices to consider the
particular case where $m=n=1$. In fact, the general case can be reduced
to $m=n=1$ by lifting the vector field through the ramified covering $(x,y) \mapsto (x^m,y^n)$.
The choice $m=n=1$ allows us to abridge notation since the
local holonomy map associated to $\fol_X$ (resp. $\fol$) coincides with the identity in this case.

Let us first consider the case in which the periods of the leaves of $\fol_X$ with respect to~$X$ are zero.
As pointed out above, this means that $am-bn = a-b \in \{-1, 1\}$. The condition $a-b \in \{-1, 1\}$ also
implies that the periods of the leaves of $\fol$ with respect to $Z$ are zero as well.
To construct the desired diffeomorphism taking $X$ to $\mathcal{Z}$, let us consider again the transverse section $\Sigma$.
Let $\Phi_X$ (resp. $\Phi_{\mathcal{Z}}$) denote the (semi-global) flow of $X$ (resp. $\mathcal{Z}$). Away from
$\{x=0\} \cup \{y=0\}$, the diffeomorphism $H$ is defined by the following rule: we choose $T$ such that
$\Phi_X (T, (x,y)) = (x_0, y_0) \in \Sigma$ (note that $T$ and $(x_0, y_0)$ are well defined since the
holonomy of $\fol_X$ is trivial and since the period of $X$ is zero); then set $H(x,y) = \Phi_{\mathcal{Z}} (-T,
\Phi_X (T, (x,y))$. Clearly $H$ is well defined and a straightforward verification shows that $H$ extends
to the separatrices $\{x=0\} \cup \{y=0\}$ so as to define a holomorphic diffeomorphism on a neighborhood
of $(0,0)$; cf. \cite{mattei}.

To finish the proof of the lemma, there remains to consider the case where not all the
periods of the leaves of $\fol_X$ with respect to~$X$ are zero. This implies that $am-bn = a-b =0$.
In particular, it is immediate to check that all the leaves of $\fol$ have the same non-zero period
with respect to $\mathcal{Z}$.

The crucial point of the argument consists of showing that all leaves of $\fol_X$ have the same period with
respect to $X$ as well. In fact, if these periods are always the same and up to multiplying $X$ by a constant,
we can assume they also coincide with the
(constant) period obtained from the leaves of $\fol$ with respect to $\mathcal{Z}$. At this point
the same construction used above yields again a well-defined diffeomorphism $H$ conjugating $X$ and $\mathcal{Z}$.
Indeed, with the preceding notation, the point $(x_0, y_0)$ is well defined since it only depends on the fact
that the local holonomy maps of the foliations in question are trivial. The value of the time ``$T$'' however is no longer
well defined. Nonetheless, two different values of $T$ differ by a multiple of the period and therefore
$H(x,y) = \Phi_{\mathcal{Z}} (-T, \Phi_X (T, (x,y))$ becomes independent of the choice of $T$. In other words,
$H$ is well defined and conjugates $X$ and $\mathcal{Z}$ as required.

Finally to show that
all the leaves of $\fol_X$ have the same period with respect to $X$, it suffices to use the fact that
the flow of the vector field $Y$ permutes the leaves of $\fol_X$. More precisely,
the time-$t$ diffeomorphism induced by $Y$ preserves $X$. Thus it conjugates the different ($1$-dimensional)
vector fields obtained by restricting $X$ to the leaves of $\fol$. Hence the periods of the leaves of $\fol_X$ must
be constant and the lemma follows.
\end{proof}

Albeit simple, the next lemma is also important for the proof of Theorem~\ref{harder_cases}.

\begin{lema}
\label{lemmaDependence1}
Let $X, \, Y$ be two semicomplete holomorphic vector fields satisfying conditions~(1), (2), and~(3) at the beginning of this section.
Assume also that $X$ is given by Formula~(\ref{generalnormalform_X}) and that $\fol_X$ is linearizable.
Then $am-bn \neq 0$.
\end{lema}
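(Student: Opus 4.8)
The plan is to argue by contradiction: assuming $am-bn=0$, I will show that the leafwise periods of $X$ vary from leaf to leaf, which is incompatible with the existence of the commuting field $Y$. Since the eigenvalues of a foliation are only defined up to a common factor, I may take $m$ and $n$ coprime. Then $am=bn$ forces $n\mid a$ and $m\mid b$, so that $a=kn$, $b=km$; moreover $k\geq 1$, for $a$ and $b$ cannot vanish simultaneously (the eigenvalues of $X$ at the origin being zero). As $\fol_X$ is linearizable, Lemma~\ref{eliminatinginvertiblefactor-f} yields coordinates $(x,y)$ in which the invertible factor has been absorbed and
$$
X = (x^n y^m)^k\,[mx\,\ddx - ny\,\ddy]\, .
$$
Thus $\fol_X$ is the linear foliation admitting $x^n y^m$ as first integral, its leaves being the curves $L_c=\{x^n y^m=c\}$.

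Next I would compute the period of $X$ along each $L_c$. Writing $V=mx\,\ddx - ny\,\ddy$ and parameterizing $L_c$ by the flow of $V$, namely $t\mapsto (e^{mt}x_0, e^{-nt}y_0)$, the coprimality of $m,n$ identifies $L_c$ with $\C/2\pi i\Z\simeq\C^{\ast}$, a generator $c_L$ of $\pi_1(L_c)$ being obtained by letting $t$ increase by $2\pi i$. On $L_c$ the factor $(x^n y^m)^k$ equals the constant $c^k$, so $X|_{L_c}=c^k\,V|_{L_c}$ and the induced time-form $dT_{L_c}$ satisfies $dT_{L_c}=c^{-k}\,dT_V$. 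Hence
$$
\int_{c_L} dT_{L_c} = c^{-k}\int_{c_L} dT_V = 2\pi i\,c^{-k}\, .
$$
As $k\geq 1$, these periods genuinely depend on the leaf and become unbounded as $c\to 0$, i.e. as $L_c$ degenerates onto the separatrices --- the same phenomenon observed for elliptic vector fields in Lemma~\ref{nonzero_periods}.

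To close the argument I would reproduce the mechanism of Proposition~\ref{solving-centralizer_elliptic}. Since $[X,Y]=0$, the local flow $\Phi_Y^t$ preserves $X$ and, by Lemma~\ref{commutinginduced}, permutes the leaves of $\fol_X$; restricted to a leaf it conjugates $X|_{L}$ to $X|_{\Phi_Y^t(L)}$ and therefore preserves the period. Because $X$ and $Y$ form a rank~$2$ system, $Y$ is transverse to $\fol_X$ at generic points, so the resulting one-parameter group of automorphisms of the (one-dimensional) leaf space has open orbits; the period would then be locally constant on the leaf space, contradicting $\int_{c_L}dT_{L_c}=2\pi i\,c^{-k}$ with $k\geq 1$. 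This contradiction gives $am-bn\neq 0$. The one step carrying real content is the period computation above: everything hinges on the fact that the resonant model $(x^n y^m)^k[mx\,\ddx-ny\,\ddy]$ has leafwise periods varying like $c^{-k}$ when $k\geq 1$, after which the incompatibility with the leaf-preserving symmetry $Y$ is exactly the argument already used in the elliptic case.
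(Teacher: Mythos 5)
Your core mechanism is exactly the paper's: in the resonant case $am-bn=0$ the leafwise periods of $X$ behave like $c^{-k}$, while commutation with the rank~$2$ partner $Y$ forces the periods to be constant on the leaf space, giving the contradiction. However, the step where you invoke Lemma~\ref{eliminatinginvertiblefactor-f} to normalize the invertible factor away, so that $X=(x^ny^m)^k[mx\,\ddx-ny\,\ddy]$ exactly, is a genuine gap --- and it is precisely the step the paper goes out of its way to avoid (its proof begins by observing that one can ``dispense with the use of Lemma~\ref{eliminatinginvertiblefactor-f}''). The problem is circularity. As your own computation shows, the model $\mathcal{Z}=(x^ny^m)^k[mx\,\ddx-ny\,\ddy]$ has periods $2\pi i\,c^{-k}$ which genuinely vary with the leaf; on the other hand, any $X$ admitting a commuting rank~$2$ partner has leaf-independent periods (the flow of $Y$ permutes the leaves of $\fol_X$ and conjugates the restrictions of $X$ to them). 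Since a conjugation maps leaves to leaves and preserves periods, no vector field satisfying conditions~(1)--(3) can ever be conjugated to $\mathcal{Z}$ when $k\geq 1$: in the resonant case Lemma~\ref{eliminatinginvertiblefactor-f} can only hold vacuously, and its vacuity is exactly the statement you are proving. (Indeed, the paper's proof of Lemma~\ref{eliminatinginvertiblefactor-f} in the case $am-bn=0$ rests on the assertion that all leaves have the same period with respect to $\mathcal{Z}$ --- an assertion contradicted by your computation; that case of the lemma is rescued only because Lemma~\ref{lemmaDependence1}, proved independently, shows it is empty.) Unwound, your argument uses Lemma~\ref{lemmaDependence1} to justify the normalization it starts from.

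The repair is small and recovers the paper's own proof: do not normalize $f$ away. Write $X=(x^ny^m)^k f(x,y)[mx\,\ddx-ny\,\ddy]$ with $f(0,0)=1$ after scaling, and parameterize the leaf through $(x_0,y_0)$ by $T\mapsto (x_0e^{mT},y_0e^{-nT})$. The period then equals
$$
\frac{1}{c^{k}}\int_{T_0}^{T_0+2\pi i}\frac{dT}{f(x_0e^{mT},y_0e^{-nT})}\,,\qquad c=x_0^ny_0^m\,,
$$
and since $f\to 1$ near the origin this quantity is unbounded as $c\to 0$; in particular it is not constant on the leaf space. The remainder of your argument --- commutation plus the rank~$2$ hypothesis forces the period to be a (locally) constant function of the leaf --- applies verbatim and yields the desired contradiction.
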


\begin{proof}
Assume aiming at a contradiction that $am-bn=0$. Since $\fol_X$ is linearizable, there exist coordinates
$(x,y)$ where $X$ is given by
$$
X = (x^n y^m)^k f (x,y) [mx \partx - ny \party] \, ,
$$
with $k \geq 1$ and $f$ holomorphic satisfying $f(0,0) \neq 0$. Note that in the sequel we can even dispense with
the use of Lemma~\ref{eliminatinginvertiblefactor-f} to set $f$ constant equal to~$1$. In fact, we observe that the period
of the vector field $X$ must be constant since it commutes with $Y$ (with $X$ and $Y$ forming
a rank~$2$ system of vector fields, i.e. $X$ and $Y$ are not linearly dependent everywhere). Hence to obtain the contradiction
it suffices to check that the periods of the vector field $X$ as above {\it do vary}\, with the leaf of $\fol_X$.
For this note the leaf $L$ through a point $(x_0 ,y_0)$ can be parameterized by $T \mapsto (x_0 e^{mT} , y_0 e^{-nT})$
where $T$ belongs to some domain $\Omega \subset \C$ with is invariant by vertical translations. The restriction of
$X$ to $L$ viewed in the coordinate $T$ is simply $(x_0^n y_0^m)^k f(x_0 e^{mT}, y_0 e^{-nT}) \, \partial /\partial T$.
Fixed $T_0 = \Re (T_0) + \imath \Im (T_0) \in \Omega$, the period of $X$ on $L$ is obtained by integrating
$dT / [(x_0^n y_0^m)^k f(x_0 e^{mT}, y_0 e^{-nT})]$ over the vertical segment going from $T_0$ to
$\Re (T_0) + \imath (2 \pi + \Im (T_0))$. Since $f(0,0) =1$, the value of this integral is clearly affected
by the choice of $(x_0, y_0)$ and hence of the leaf $L$. The lemma is proved.
\end{proof}

The final main ingredient in the proof of Theorem~\ref{harder_cases} is the following proposition:

\begin{prop}
\label{automaticlinearization}
Let $X, \, Y$ be two holomorphic vector fields satisfying conditions~(1), (2), and~(3) at the beginning
of the section. Then the foliation $\fol_X$ (resp. $\fol_Y$) associated with $X$ (resp. $Y$) is linearizable.
\end{prop}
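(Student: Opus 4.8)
The plan is to argue by contradiction, assuming that one of the two foliations, say $\fol_X$, fails to be linearizable, and to show that this is incompatible with the existence of a rank~$2$ commuting partner. Since the separatrices of $\fol_X$ and $\fol_Y$ coincide with the coordinate axes (Lemma~\ref{existence_firstcoordinates}), the vector field $X$ has the form~(\ref{generalnormalform_X}) with $am-bn\in\{-1,0,1\}$, and non-linearizability forces $am-bn=0$; in particular $\fol_X$ is a genuine resonant (Martinet--Ramis) saddle. The first reduction is to observe, via the contrapositive of Corollary~\ref{corol_linearizable}, that the eigenvalues of $\fol_X$ and $\fol_Y$ must then coincide: were they distinct, both foliations would already be linearizable. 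After normalizing the eigenvalues to $m,-n$ with $m,n$ coprime and lifting through the ramified covering $(x,y)\mapsto(x^m,y^n)$ exactly as in the proof of Lemma~\ref{eliminatinginvertiblefactor-f}, I would further reduce to the model resonance $m=n=1$, where the holonomy $h_X$ of $\fol_X$ along a separatrix is a germ tangent to the identity. In these terms, non-linearizability of $\fol_X$ is equivalent to $h_X\neq\mathrm{id}$, i.e.\ $h_X$ is a non-trivial parabolic germ $y\mapsto y+cy^{k+1}+\cdots$ with $c\neq 0$ (see \cite{mattei}, \cite{MR-ENS}).

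The second step extracts the extra rigidity coming from $Y$. By Lemma~\ref{commutinginduced} the local flow of $Y$ induces a $1$-parameter group of automorphisms of the leaf space of $\fol_X$; realizing this leaf space by a transversal $\Sigma$ to $\fol_X$ through a point of the separatrix produces a holomorphic $1$-parameter flow $\Psi^t$ on $(\Sigma,0)$ which commutes with the holonomy $h_X$, because the flow of $Y$ preserves $\fol_X$ and hence conjugates its holonomy pseudogroup to itself. The crucial point is that $\Psi^t$ is non-trivial: the leaf space is one-dimensional, so a trivial induced flow would force $Y$ to be everywhere tangent to $\fol_X$, hence $\fol_Y=\fol_X$, contradicting the rank~$2$ hypothesis~(1). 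Thus I obtain a non-trivial holomorphic flow on $(\C,0)$ commuting with the non-trivial parabolic germ $h_X$. Since this flow acts with open orbits on the one-dimensional leaf space, every holomorphic function on that leaf space which is invariant under the flow must be constant; this is the mechanism that will clash with the variation of a period-type invariant, in the spirit of Lemma~\ref{lemmaDependence1}.

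The heart of the proof — and the step I expect to be the main obstacle — is to turn the coexistence of a non-trivial commuting flow $\Psi^t$ and a non-trivial parabolic $h_X$ into an outright contradiction. Here I would invoke the rigidity of centralizers of parabolic germs: a non-trivial holomorphic flow can commute with $h_X$ only when $h_X$ is analytically embeddable in a flow, in which case $\Psi^t$ is forced to be a reparametrization of that embedding flow, so that $h_X$ and $\Psi^t$ share the same infinitesimal generator. When $h_X$ is \emph{not} embeddable its analytic centralizer is essentially discrete (generated by the iterates of $h_X$), so no non-trivial $\Psi^t$ can exist and the rank~$2$ hypothesis is already violated. The remaining, and delicate, embeddable sub-case is then closed by feeding in the semicompleteness of $X$ together with the explicit resonant normal form: the time-form of $X$ endows the transversal with a canonical period datum attached to winding $m$ times around the separatrix, and the non-trivial flow $\Psi^t$ forces this datum to be leaf-independent, whereas a direct computation with the parabolic normal form — parallel to the period computation of Lemma~\ref{lemmaDependence1} — shows that it genuinely varies from leaf to leaf unless $h_X=\mathrm{id}$. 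The resulting contradiction shows $h_X=\mathrm{id}$, i.e.\ $\fol_X$ is linearizable; applying the same argument with the roles of $X$ and $Y$ interchanged gives linearizability of $\fol_Y$ as well, completing the proof.
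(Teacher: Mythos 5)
Your first two steps essentially reproduce the paper's own argument: the equal-eigenvalue reduction is Corollary~\ref{corol_linearizable}, the induced non-trivial one-parameter group on the leaf space of $\fol_X$ is Lemma~\ref{commutinginduced} (with non-triviality following from the rank~$2$ hypothesis exactly as you argue), and your centralizer-rigidity dichotomy is precisely the content of Lemma~\ref{Siegel1.com}: a non-trivial flow on the leaf space trivializes the Martinet--Ramis/Ecalle--Voronin invariant $\varphi$, so the non-embeddable branch is impossible and in the embeddable branch $\fol_X$ is analytically conjugate to the model $\fol_{\lambda,\tpp}$, with the generator of $\Psi^t$ a constant multiple of the vector field whose time-one map is $h_X$.

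The gap is in your final, embeddable sub-case, and it is genuine. Your contradiction rests on a ``period datum attached to winding $m$ times around the separatrix'' that is supposed to be simultaneously leaf-independent (by invariance under $\Psi^t$) and leaf-dependent (by computation). But when $h_X$ is a non-trivial parabolic germ, no iterate of $h_X$ has fixed points near the origin other than the origin itself, so the lift to a leaf of the winding loop is \emph{not closed}: it joins a point $z$ of the transversal to $h_X^m(z) \neq z$. There is therefore no period at all; the substitute quantity $\int_z^{h_X^m(z)} dT_L$ is a function of the base point $z$, not of the leaf, and it is \emph{not} invariant under the induced flow (transporting the path by $\Phi_Y^t$ and re-projecting to the transversal introduces correction terms at both endpoints). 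So neither half of your dichotomy can be established, and this is exactly why the period mechanism of Lemma~\ref{lemmaDependence1} --- which relies on the holonomy being periodic, so that leaves have cyclic fundamental group --- does not transplant to the resonant non-linearizable case. Note also that semicompleteness of $X$ alone cannot produce the contradiction: item~10 of Table~\ref{TheTable} shows that semicomplete vector fields with non-linearizable Martinet--Ramis foliations do exist. The paper closes this case with a different idea, which is the ingredient missing from your proof: by Lemma~\ref{k_2greaterthank_1} one may assume, after replacing $Y$ by $Z = X - Y$ if necessary, that one has \emph{two} commuting partners $Y$ and $Z=X-Y$ whose orders $(x^ny^m)^{k_2}$ and $(x^ny^m)^{k_1}$ satisfy $k_2 > k_1$ strictly; the rigidity you already established then forces \emph{both} transversal projections $Y_\Sigma$ and $Z_\Sigma$ to be constant multiples of the same vector field $2\pi i \, y_1^{m\tpp+1}(1+\lambda y_1^{m\tpp})^{-1}\, \partial/\partial y_1$, whence $k_2 = \tpp = k_1$, the desired contradiction. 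In short: the rigidity step pins down \emph{orders}, not periods, and one needs the $X-Y$ trick to turn that into an impossibility.
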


Taking for granted Proposition~\ref{automaticlinearization}, the proof of Theorem~\ref{harder_cases}
can now be provided.

\begin{proof}[Proof of Theorem~\ref{harder_cases}]
Let $X$ and $Y$ be vector fields as in the statement of the theorem in question. In view of the discussion
in Section~\ref{invertiblefunctions_etc}, we can assume that the foliations associated with $X$ and with $Y$
have both Martinet-Ramis singular points at the origin, cf. in particular Lemma~\ref{finishing_job_regularfoliation-1}
and Lemma~\ref{finishing_job_regularfoliation-2}. In other words, $X$ and $Y$ satisfy conditions~(1), (2), and~(3)
at the beginning of the present section. Actually, we can assume without loss of generality that
conditions~(1), (2), and~(3) are satisfied by all non-trivial linear combination of $X$ and $Y$ as well.

According to Proposition~\ref{automaticlinearization}, the foliation $\fol_X$ (resp. $\fol_Y$) associated
with $X$ (resp. $Y$) is linearizable. In turn, the combination of Lemma~\ref{eliminatinginvertiblefactor-f} and
Lemma~\ref{lemmaDependence1} yields local coordinates $(x,y)$ where $X$ becomes
$$
X = x^a y^b [mx \partial /\partial x - ny  \partial /\partial y]
$$
with $am-bn \in \{-1, 1\}$, $a,b \in \N$. Let $k_1$ be the greatest non-negative integer for which $(x^n y^m)^{k_1}$
divides $x^ay^b$. Then we can set
$$
X = (x^n y^m)^{k_1} x^{a_{\mu}} y^{b_{\mu}} [mx \partial /\partial x - ny  \partial /\partial y] \, ,
$$
where $a_{\mu}, b_{\mu}$ are non-negative integers satisfying $a_{\mu} m - b_{\mu} n = am-bn = \pm 1$ and such that
the function $(x,y) \mapsto x^{a_{\mu}} y^{b_{\mu}} / x^ny^m$ is strictly meromorphic.
On the other hand, the vector field $b x \ddx - ay \ddy$ commutes with $X$ and satisfies
condition~(1) with $X$. Therefore the general form of a holomorphic vector field commuting with $X$ and having
all eigenvalues at the origin equal to zero is given by
\begin{equation}
f_1 (x^n y^m) (x^n y^m)^{k_1} x^{a_{\mu}} y^{b_{\mu}} [mx \partial /\partial x - ny  \partial /\partial y]
+ f_2 (x^n y^m) [b x \ddx - ay \ddy] \, , \label{General_form_MartinetRamis_CommutingVF}
\end{equation}
where $f_1$ and $f_2$ are functions of a single variable satisfying the following conditions:
\begin{itemize}
  \item $f_1$ is meromorphic at $0\in \C$ with order greater than or equal to $-k_1$;
  \item $f_2$ is holomorphic around $0\in \C$ and, in addition, $f_2 (0) =0$.
\end{itemize}
The vector field $Y$ must admit the form indicated in Formula~(\ref{General_form_MartinetRamis_CommutingVF}).
However the foliation $\fol_Y$ associated with $Y$ must have a Martinet-Ramis singular point at the origin.
On the other hand, we can assume that $X$ and $Y$ have the same order at the origin, since $X$ and $Y$ can be replaced
by any non-trivial couple of linear combinations of them. From this, it easily follows that
$\fol_X$ and $\fol_Y$ must have the same eigenvalues ($m,-n$) at the origin as well. In addition, $Y$ must then have
the form
\begin{eqnarray*}
Y  & = &  (x^n y^m)^{k_1} x^{a_{\mu}} y^{b_{\mu}} u_1(x^ny^m)
[mx \partial /\partial x - ny  \partial /\partial y + x^{-a_{\mu}} y^{-b_{\mu}} u_2 (x^ny^m) [b x \ddx - ay \ddy]] \\
 & = & x^a y^b u_1(x^ny^m)
[mx \partial /\partial x - ny  \partial /\partial y + x^{-a_{\mu}} y^{-b_{\mu}} u_2 (x^ny^m) [b x \ddx - ay \ddy]] \, ,
\end{eqnarray*}
with $u_1, u_2$ holomorphic functions of a single variable such that:
\begin{enumerate}
  \item $u_1(0) =1$;
  \item The map
$(x,y) \mapsto x^{-a_{\mu}} y^{-b_{\mu}} u_2 (x^ny^m)$ is holomorphic of order at least~$1$ at the origin
(since the eigenvalues of $Y$ are $m,-n$).
\end{enumerate}

The proof of Theorem~\ref{harder_cases} is now reduced to showing that a vector field $Y$ as above must be semicomplete
on a neighborhood of the origin. For this we begin by noticing that the foliation associated with $Y$ is also
linearizable for the roles of $X$ and $Y$ can be permuted. Yet, in this respect, we can be slightly more accurate
while also providing an elementary proof independent of the semicomplete character of~$Y$. Indeed, consider
the vector field
$$
Z = mx \partial /\partial x - ny  \partial /\partial y + x^{-a_{\mu}} y^{-b_{\mu}} u_2 (x^ny^m) [b x \ddx - ay \ddy] \, .
$$
Recall that a monomial $x^{k_1} y^{k_2}$ ($k_1 + k_2 \geq 2$)
in the Taylor series of $Z$ is said to be {\it resonant}\, if either $m = k_1 m - k_2 n$ or $-n =  k_1 m - k_2 n$.
In other words, these are of the form $x^{1+km}y^{kn}$ in the component $\partial /\partial x$ and
of the form $x^{km}y^{1+kn}$ in the component $\partial /\partial y$, where $k \in \N^{\ast}$. Since
$a_{\mu} m - b_{\mu} n = \pm 1$, there follows that the Taylor series of $Z$ contains no resonant monomials.
The standard power series procedure then ensures that $Z$ is linearizable, as follows by a simple induction
argument, by taking into account that the desired
change of coordinates has the form $(x_1, y_1) = (x_1 (1 + \zeta_1 (x_1, y_1)) ,
y_1(1 +\zeta_2 (x_1, y_1)))$ where the
monomials with non-zero coefficients in the Taylor series of
$\zeta_j$ ($j=1,2$) have all the form $(x^{a_{\mu}} y^{b_{\mu}})^{l_1} (x^n y^m)^{l_2}$, with $l_1, l_2 \in \N$
and $\zeta_1 (0,0) = \zeta_2 (0,0) =0$. Additional details can be found for example in \cite{sad_camacho}.

Summarizing what precedes, in coordinates $(x_1, y_1)$
the vector field $Y$ becomes
\begin{equation}
Y = (x_1^n y_1^m)^{k_1} x_1^{a_{\mu}} y_1^{b_{\mu}}  v(x_1, y_1)
[mx_1 \partial /\partial x_1 - ny_1  \partial /\partial y_1] \label{Finishing_with_vectorfield_Y}
\end{equation}
where $v$ is some holomorphic function satisfying $v(0,0) =1$.
To conclude that $Y$ as in Formula~(\ref{Finishing_with_vectorfield_Y}) is semicomplete, we proceed as follows.
Recall that the leaves of the foliation associated with $Y$ are parameterized by
$T \mapsto (x_0 e^{mT} , y_0 e^{-nT})$
where $T$ belongs to some domain $\Omega \subset \C$ with is invariant by vertical translations,
cf. Lemma~\ref{lemmaDependence1}. Arguing as in Lemma~\ref{lemmaDependence1} (or actually with the
affine structure of \cite{GuR} and its relation with monodromy), we see that
the periods of $Y$ over the leaves of the foliation will vary with the leaf unless they are all equal to
zero. However, since $X$ commutes with $Y$, we know a priori that the periods in question should not depend
on the leaf. Hence they are all equal to zero.

Next recall that in the coordinate $T$, the restriction of $X$ to the leaf in question
becomes $(x_0^a y_0^b) v(x_0 e^{mT}, y_0 e^{-nT})  e^T \, \partial /\partial T$ which converges uniformly
to $x_0^a y_0^b e^T \, \partial /\partial T$ as both $x_0, y_0$ go to zero.
Since the shape of $\Omega \subset \C$
is determined by the foliation - and hence does not depend on multiplicative factors - we conclude
that the vector field $x_0^a y_0^b v(x_0 e^{mT}, y_0 e^{-nT}) e^T \, \partial /\partial T$ is semicomplete
from the fact that it is a perturbation - keeping all periods equal to zero - of the semicomplete vector field
corresponding to $v$ being constant equal to~$1$. Theorem~\ref{harder_cases} is proved.
\end{proof}

The remainder of the paper is devoted to the proof of Proposition~\ref{automaticlinearization}. Consider then vector fields
$X$ and $Y$ as in this proposition. If one of the foliations, say $\fol_X$, is linearizable. As in the proof
of Theorem~\ref{harder_cases}, there follows from  Lemmas~\ref{eliminatinginvertiblefactor-f} and~\ref{lemmaDependence1} that
 $X$ admits the normal form
$$
X = x^a y^b [mx \partial /\partial x - ny  \partial /\partial y]
$$
with $am-bn \in \{-1, 1\}$. The pair $X$ and $Y$ is then as indicated in Theorem~\ref{harder_cases} and there is nothing
to be proved. Thus, in order to prove Proposition~\ref{automaticlinearization}, we assume, aiming at a contradiction,
that neither $\fol_X$ nor $\fol_Y$ is linearizable. In particular, $\fol_X$ and $\fol_Y$ must have the same
eigenvalues $m, -n$ at the origin, with $m, n \in \N^{\ast}$ (see Corollary~\ref{corol_linearizable}).

Owing to Lemma~\ref{existence_firstcoordinates}, there exist coordinates $(x,y)$ where $X$ has the form
\begin{equation}
X =(x^ny^m)^{k_1} [mx (1+ {\rm h.o.t.}) \ddx - n y (1+ {\rm h.o.t.}) \ddy] \label{Form_final_for_X}
\end{equation}
while
\begin{equation}
Y = (x^ny^m)^{k_2} [mx (1+ {\rm h.o.t.}) \ddx - n y (1+ {\rm h.o.t.}) \ddy] \, , \label{Form_final_for_Y}
\end{equation}
where $k_1, k_2$ belong to $\N^{\ast}$. The following simple lemma will be useful in the discussion below.

\begin{lema}
\label{k_2greaterthank_1}
With the above notation, we can assume without loss of generality that $k_2 > k_1$ (strictly).
\end{lema}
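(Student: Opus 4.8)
The plan is to use the symmetry between $X$ and $Y$ together with the standing reduction that every nontrivial linear combination of $X$ and $Y$ again satisfies conditions~(1), (2), and~(3). Since $X$ and $Y$ enter~(\ref{Form_final_for_X}) and~(\ref{Form_final_for_Y}) in exactly the same way, if $k_1\neq k_2$ one simply relabels the two vector fields so as to have $k_2>k_1$. Thus the only case that needs an argument is $k_1=k_2=:k$, and the goal is to replace $Y$ by a linear combination of $X$ and $Y$ whose order of divisibility by $x^ny^m$ is strictly larger than that of~$X$. At this point it is also convenient to assume that no nontrivial linear combination of $X$ and $Y$ is linearizable; otherwise the opening paragraph of the proof of Proposition~\ref{automaticlinearization}, applied to that combination paired with $X$, would already force $\fol_X$ to be linearizable and hence establish the proposition outright, making the present normalization superfluous.

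Assuming then $k_1=k_2=k$, I would first rescale $X$ and $Y$ by nonzero constants so that both have the \emph{same} lowest-degree homogeneous component, namely $(x^ny^m)^{k}[\,mx\,\partial/\partial x-ny\,\partial/\partial y\,]$, which has degree $k(n+m)+1$. Setting $Y':=Y-X$, the two leading components cancel, so the order of $Y'$ at the origin is strictly greater than $k(n+m)+1$. Moreover $Y'\neq 0$: since $X,Y$ form a rank~$2$ system they are not proportional, so $Y'$ is itself a nontrivial linear combination of $X$ and $Y$, and in particular it is not proportional to $X$ and still satisfies conditions~(1), (2), and~(3).

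It remains to verify that $Y'$ again has the shape~(\ref{Form_final_for_Y}) but with a strictly larger exponent. By Lemma~\ref{commonseparatrices}, $\fol_{Y'}$ shares the separatrices $\{x=0\}$ and $\{y=0\}$ with $\fol_X$. By the assumption made above, $\fol_{Y'}$ is not linearizable; since it commutes with the (also non-linearizable) field $X$, Corollary~\ref{corol_linearizable} forces $\fol_{Y'}$ to have the same eigenvalues $m,-n$ as $\fol_X$. Consequently $Y'$ can be written as $(x^ny^m)^{k'}[\,mx(1+{\rm h.o.t.})\,\partial/\partial x-ny(1+{\rm h.o.t.})\,\partial/\partial y\,]$ for some $k'\in\N^{\ast}$, and therefore its order equals $k'(n+m)+1$. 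Comparing this with the order estimate of the previous paragraph gives $k'(n+m)+1>k(n+m)+1$, hence $k'>k=k_1$. Renaming $Y'$ as $Y$ yields $k_2=k'>k_1$, as desired.

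The step I expect to be the main obstacle is the last one: controlling the effect of the subtraction on the normal form, and in particular ensuring that the divisor of $Y'$ is genuinely a \emph{higher power} of $x^ny^m$ rather than merely that its order has increased. This is exactly what the eigenvalue bookkeeping through Corollary~\ref{corol_linearizable} is meant to secure, once the linearizable alternative for $\fol_{Y'}$ has been ruled out by the preliminary reduction.
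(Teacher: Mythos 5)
Your overall strategy coincides with the paper's: reduce to $k_1=k_2$, pass to the difference $Y'=Y-X$ so that the leading homogeneous components cancel and the order increases strictly, then argue that $Y'$ again admits the shape~(\ref{Form_final_for_Y}) with a strictly larger exponent, and rename. The order bookkeeping, the use of Corollary~\ref{corol_linearizable} combined with the non-linearizability of $\fol_X$ to force equality of eigenvalues, and the final rewriting all match the paper's argument (which sets $Z=X-Y$ and invokes Lemma~\ref{generalequivalence} for the semicompleteness of the combination).

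The gap lies in how you secure condition~(3) for $Y'$, namely that $\fol_{Y'}$ has a Martinet-Ramis singular point — the hypothesis required before Lemma~\ref{commonseparatrices}, Corollary~\ref{corol_linearizable} and Lemma~\ref{existence_firstcoordinates} can be applied to the pair $(X,Y')$. You take this from ``the standing reduction that every nontrivial linear combination of $X$ and $Y$ satisfies (1), (2), (3)''. But that reduction is a without-loss-of-generality assumption made \emph{inside} the proof of Theorem~\ref{harder_cases}, which is logically downstream of the present lemma (the proof of Theorem~\ref{harder_cases} invokes Proposition~\ref{automaticlinearization}, whose proof uses this lemma); it is not part of ``the above notation'' and cannot be imported here without re-justification. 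Your preliminary reduction does not fill this hole: it only disposes of the alternative that $\fol_{Y'}$ is \emph{linearizable}, and even there it tacitly presupposes that $\fol_{Y'}$ is of Martinet-Ramis type, since the opening paragraph of the proof of Proposition~\ref{automaticlinearization} operates under conditions (1)--(3). What remains unaddressed is the possibility that $\fol_{Y'}$ is not of Martinet-Ramis type at all: by Table~\ref{TheTable} it could a priori be regular, parabolic, elliptic, or a dicritical singularity as in item~11. The paper's proof dispatches exactly this point: if the foliation of the difference is not Martinet-Ramis, the commuting pair falls back into one of the cases of Main Theorem already treated in Sections~\ref{globalexamples} and~\ref{invertiblefunctions_etc}, and in each of those classifications neither member of the pair carries a non-linearizable Martinet-Ramis foliation, contradicting the standing assumption on $\fol_X$. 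Adding this argument (equivalently, proving the ``standing reduction'' by the same means) closes the gap; without it, the applications of Lemma~\ref{commonseparatrices} and Corollary~\ref{corol_linearizable} to $(X,Y')$ are unjustified.
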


\begin{proof}
The order of a germ of vector field is the degree of the first non-zero homogeneous component at the origin.
In particular, the order of $X$ (resp. $Y$) as above is $k_1+1$ (resp. $k_2 +1$).
Assume now that $k_1 =k_2$ otherwise there is nothing to be proved. Note however that the vector field
$Z = X-Y$ has order strictly greater than the orders of $X$ and of $Y$. Furthermore $X$ and $Z$ still form
a rank~$2$ systems of commuting semicomplete vector fields owing to Lemma~\ref{generalequivalence}. In particular
they satisfy conditions~(1) and~(2) at the beginning of this section. We can however assume that the foliation
$\fol_Z$ associated with $Z$ has a Martinet-Ramis singular point at the origin for, otherwise, the pair
$X$ and $Z$ falls back in one of the cases already treated of Main Theorem. Moreover, since by assumption $\fol_X$ is
not linearizable at the origin, there follows that $\fol_X$ and $\fol_Z$ share the same eigenvalues
(Corollary~\ref{corol_linearizable}). Thus we can replace $Y$ by $Z$ in the previous discussion and since
the order of $Z$ at the origin is strictly larger than the order of $X$, there will follow that $k_2 > k_1$ as desired.
\end{proof}

Since $X,\, Y$ are linearly independent at generic points (condition~(1)), the flow of $Y$ induces a non-trivial $1$-parameter
group of transformations on the leaf space of $\fol_X$. To exploit this observation,
it is convenient to briefly recall the description of the leaf space of $\fol_X$
provided by Martinet-Ramis in \cite{MR-ENS} (see also \cite{Frank}).

The starting point is to observe that $\fol_X$ admits a {\it formal}\, normal form corresponding
to the foliation $\fol_{\lambda, \tpp}$ defined by the $1$-form
$$
mx [ 1 + \lambda (x^n y^m)^\tpp] \, dy + n y [1 + (\lambda -1)(x^n y^m)^\tpp] \, dx \, .
$$
In particular, the complex number $\lambda$ and the positive integer $\tpp \geq 1$ are
the only formal invariants of $\fol_X$ ($m,n$ being fixed).

Let $h_X$ denote the local holonomy of $\fol_X$ with respect to the axis $\{ y=0\}$. Then $h_X$ is formally conjugate to $\sigma
\circ g_{m\tpp, \lambda}$, where $\sigma$ is the rotation $\sigma (z) = e^{2\pi i n/m} z$ and
$g_{m\tpp, \lambda}$ is the time-one map induced by the vector field
$$
Z_{m\tpp, \lambda} = 2 \pi i \frac{z^{m\tpp +1}}{1 + \lambda z^{m\tpp}}
\partz \, .
$$
The leaf space of $\fol_X$ on $\C^2 \setminus \{ x=0\}$ is identified
to the orbit space of $h_X$ on a neighborhood of $0 \in \C$. In
turn, the latter space can be realized as a ``$\tpp$-collar of
spheres'', cf. \cite{MR-ENS}. This is a $1$-dimensional non-separate
complex manifold $\Lambda$ with $2\tpp$ distinguished points $a_0, a_1,
\ldots , a_{2\tpp-1}$ satisfying the following condition: for every $i$
(mod $2\tpp$) $a_i, a_{i+1}$ belong to a unique Riemann sphere embedded
in $\Lambda$. Thus every $a_i$ belongs exactly to {\it two
consecutive spheres}\, glued together over neighborhoods of $a_i$ by
means of some analytic diffeomorphism. We choose one sphere and
denote it by $S_0^2$. The remaining spheres can then naturally be
ordered by the gluing points. Let $z_0$ be a coordinate on $S_0^2$
where the gluing points are identified to $0, \infty$. The next step
consists of determining a coordinate $z_1$ over $S_1^2$ such that
the gluing diffeomorphisms at the points $0, \infty$ are tangent to
the identity. The procedure is then repeated over each $S_i^2$ ($i=1, \ldots , 2\tpp-1$) by requiring
that the gluing diffeomorphisms
\begin{eqnarray*}
& & \varphi^j_{\infty} : (S_{2j+1} , \infty) \longrightarrow (S_{2j} ,
\infty) \;\; \; \; \; \; (j=0, \ldots , \tpp-1)  \; , \\
& & \varphi^j_{0} : (S_{2j} , 0) \longrightarrow (S_{2j-1} , 0)
\;\; \; \; \; \; (j=1, \ldots , \tpp-1)
\end{eqnarray*}
are tangent to the identity at the gluing points. In the coordinate
$z_0 , z_{2\tpp-1}$ the diffeomorphism $ \psi_0^0 (S_0^2 , 0)
\rightarrow (S_{2\tpp-1} ,0)$ becomes $\psi_0^0 = e^{-2\pi i \lambda}
\phi_0^0$ where $\phi_0^0$ is tangent to the identity. Summarizing
to each $\tpp$-collar of spheres, it is associated a pair $(\lambda ,
\varphi \in [{\rm Diff}\, (S^2; 0 , \infty )]^{\tpp})$. The arbitrary choices made
above are all encoded in the natural action of $\Z_{2\tpp} \times \Z_2
\times \C^{\ast}$ on the set of these pairs. Finally two $\tpp$-collar
of spheres are isomorphic if and only if their associated pairs
coincide modulo this action.

Through its sectorial normalizations, a local diffeomorphism
formally conjugate to $\sigma \circ g_{m\tpp, \lambda}$ induces a
$\tpp$-collar of spheres whose associated pair has the form $(\lambda
-qm/m^2, \varphi )$ for some $\varphi \in [{\rm Diff}\, (S^2; 0 ,
\infty )]^{\tpp}$ (where $q$ is such that $qn=1$ modulo $m$). Two
diffeomorphisms formally conjugate to $\sigma \circ g_{m\tpp, \lambda}$
are {\it analytically conjugate}\,to each other if, and only if, their
associated $\tpp$-collar of spheres are isomorphic. Alternatively these diffeomorphisms
are analytically conjugate if, and only if, their associated pairs
belong to the same orbit of the mentioned action of $\Z_{2\tpp} \times \Z_2 \times \C^{\ast}$. Finally,
the $\tpp$-collar of spheres corresponding to the diffeomorphism
$\sigma \circ g_{m\tpp, \lambda}$ is characterized by having $\varphi =
{\rm Id}$ (with slightly different conventions, this collar is
called euclidean in \cite{Frank}).

Let us now go back to the foliation $\fol_X$. It was seen that $Y$ induces
a non-trivial ($1$-parameter) group of automorphisms in the leaf space of $\fol_X$. By construction,
these are holomorphic automorphism of the (singular) Riemann surface $\Lambda$.
Since $\Lambda$ has dimension~$1$, it follows
that this group must coincide with the flow of a non-trivial
holomorphic vector field $\mathcal{Z}$ globally defined on
$\Lambda$. The existence of a non-trivial holomorphic vector field
clearly trivializes the invariant
$\varphi \in [{\rm Diff}\, (S^2; 0, \infty )]^\tpp$. Thus we obtain:

\begin{lema}
\label{Siegel1.com}
The foliation $\fol_X$ is analytically conjugate to the foliation given by
$$
mx [ 1 + \lambda (x^n y^m)^\tpp] \, dy + n y [1 + (\lambda -1)(x^n y^m)^\tpp] \, dx \, ,
$$
i.e. the formal conjugacy between $\fol_X$ and $\fol_{\lambda, \tpp}$ is, in fact,
analytic.\qed
\end{lema}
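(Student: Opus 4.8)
The plan is to show that the analytic modulus of $\fol_X$ is trivial, i.e.\ that the $\tpp$-collar of spheres $\Lambda$ associated with $\fol_X$ is the euclidean one ($\varphi = {\rm Id}$), since by the Martinet--Ramis classification recalled above this is exactly the condition for the \emph{formal} conjugacy $\fol_X \cong \fol_{\lambda, \tpp}$ to be \emph{analytic}. The whole argument rests on extracting rigidity from the presence of the commuting vector field $Y$.

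First I would invoke Lemma~\ref{commutinginduced} to produce, out of the local flow of $Y$, a $1$-parameter group of holomorphic automorphisms of the leaf space of $\fol_X$, that is, of $\Lambda$. This group is non-trivial: were it trivial, the flow of $Y$ would preserve every leaf of $\fol_X$, forcing $Y$ to be everywhere tangent to $\fol_X$ and contradicting the rank~$2$ assumption (condition~(1)) that $X$ and $Y$ are linearly independent at generic points. Since $\Lambda$ has dimension~$1$, this $1$-parameter group is the flow of a holomorphic vector field $\mathcal{Z}$ on $\Lambda$ with $\mathcal{Z} \not\equiv 0$.

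Next I would analyze $\mathcal{Z}$ sphere by sphere. Being connected to the identity, the induced automorphisms preserve the discrete combinatorial data of the collar, hence fix each sphere $S_i^2$ setwise and fix each distinguished gluing point $a_i$; so in the coordinate $z_i$ on $S_i^2$ in which the two gluing points are $0$ and $\infty$ one has $\mathcal{Z}|_{S_i^2} = c_i\, z_i\, \partial/\partial z_i$ for suitable constants $c_i \in \C$. The crux is then a matching computation at the gluing points. Writing a gluing diffeomorphism tangent to the identity as $\varphi(z) = z + a_k z^k + \cdots$ with $a_k \neq 0$ and $k \geq 2$, the requirement that $\varphi$ carry $c_i\, z_i\, \partial/\partial z_i$ to $c_{i+1}\, z_{i+1}\, \partial/\partial z_{i+1}$ reads $c_i\,\varphi^{-1}(w)\,\varphi'(\varphi^{-1}(w)) = c_{i+1}\,w$; comparing coefficients forces $c_i = c_{i+1}$ (linear term) and $(k-1)\,a_k\,c_i = 0$ (order $k$ term), whence $c_i = 0$ whenever $\varphi \neq {\rm Id}$. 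At the single gluing carrying the linear factor $e^{-2\pi i \lambda}$, note that $z\,\partial/\partial z$ is invariant under linear scalings, so this factor imposes no obstruction beyond the behaviour of its tangent-to-identity part.

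Finally I would propagate this information around the cyclic collar. Since $\mathcal{Z} \not\equiv 0$, some $c_{i_0} \neq 0$; by the matching computation the two gluings bounding $S_{i_0}^2$ must then be exactly the identity on their tangent-to-identity parts, and the neighbouring constants equal $c_{i_0}$. Iterating around the full cycle of $2\tpp$ spheres shows that all $c_i \neq 0$ and that every tangent-to-identity gluing diffeomorphism is the identity, i.e.\ $\varphi = {\rm Id}$. Thus $\Lambda$ is the euclidean collar and the Martinet--Ramis classification yields that $\fol_X$ is analytically conjugate to $\fol_{\lambda, \tpp}$. I expect the main obstacle to be the bookkeeping in the matching step: one must keep careful track of the normalizations made in constructing the coordinates $z_i$ (in particular the role of $\lambda$ in $\psi_0^0$) to be certain that the \emph{only} surviving obstruction is genuinely the tangent-to-identity modulus $\varphi$, so that its vanishing really does characterize the euclidean collar.
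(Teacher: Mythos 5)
Your proposal is correct and follows essentially the same route as the paper: the paper's proof is precisely the construction of the non-trivial holomorphic vector field $\mathcal{Z}$ on the collar $\Lambda$ via Lemma~\ref{commutinginduced} and the rank~$2$ hypothesis, followed by the observation that such a vector field ``clearly trivializes'' the invariant $\varphi \in [{\rm Diff}\,(S^2;0,\infty)]^{\tpp}$. Your sphere-by-sphere matching computation (forcing $\mathcal{Z}|_{S_i^2} = c_i z_i \partial/\partial z_i$, then $c_i z\varphi'(z) = c_{i+1}\varphi(z)$, hence $\varphi = {\rm Id}$ wherever some $c_i \neq 0$, propagated around the cycle) is exactly the detail the paper leaves implicit, and it is carried out correctly, including the remark that the linear factor $e^{-2\pi i\lambda}$ in $\psi_0^0$ causes no obstruction.
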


A (local) representative for the leaf space of $\fol_X$ can be obtained by means of a local transverse
section. More precisely consider a local transverse section $\Sigma$ to the leaves of
$\fol_X$ passing through a point $p_0 \in \{y=0\}$ so that $\Sigma$ can be identified to a neighborhood
of $0 \in \C$. The vector field $Y$ has a natural ``projection'' $Y_{\Sigma}$ on $\Sigma$ defined as follows.
Given local coordinates $(x_1, y_1)$ around $p_0$ so that $X=f(x_1,y_1) \partial /\partial x_1$,
the vector field $Y$ takes on the form $Y = F(x_1,y_1) \partial /\partial x_1 + G(y_1) \partial /\partial y_1$,
where $G$ depends only on~$y_1$ since $[X,Y]=0$.
The vector field $Y_{\Sigma}$ is then given by $Y_{\Sigma} = G(y_1) \partial /\partial y_1$.

\begin{proof}[Proof of Proposition~\ref{automaticlinearization}]
Keep the preceding notation and recall that we have assumed aiming at a contradiction
that neither $\fol_X$ nor $\fol_Y$ is linearizable. The vector fields $X$ and $Y$ are given
in suitable coordinates by Formulas~(\ref{Form_final_for_X}) and~(\ref{Form_final_for_Y}).
In view of Lemma~\ref{k_2greaterthank_1}, we also assume that $k_2 > k_1$.

Thanks to Lemma~\ref{Siegel1.com}, we may change the coordinates so that $X$ becomes
\begin{equation}
X= (x^ny^m)^{k_1} f(x,y) [
mx ( 1 + \lambda (x^n y^m)^\tpp) \partial /\partial x - n y (1 + (\lambda -1)(x^n y^m)^\tpp)
\partial /\partial y] \, , \label{formalnormalformX}
\end{equation}
where $f(0,0) \neq 1$ (up to a multiplicative constant). In the same coordinates $(x,y)$,
the first non-zero homogeneous component $Y^H$ of the vector field $Y$ is given by
\[
Y^H = (x^ny^m)^{k_2} [ mx \partial /\partial x - n y \partial /\partial y] \, .
\]
As already pointed out in the proof of Lemma~\ref{k_2greaterthank_1}, we can also assume that $Z = X-Y$
forms with $X$ a rank~$2$ system of commuting vector fields satisfying the conditions~(1), (2), and~(3)
at the beginning of the section. Moreover the first non-zero homogeneous component $Z^H$ of $Z$ is given by
\[
Z^H = (x^ny^m)^{k_1} [ mx \partial /\partial x - n y \partial /\partial y] \, .
\]
Thus the vector field $Z$ itself has the form
$$
Z = (x^ny^m)^{k_1} [ mx (1 + {\rm h.o.t.}) \partial /\partial x - n y (1 + {\rm h.o.t.}) \partial /\partial y] \, .
$$

Next let $\Sigma$ be a local section transverse to the leaves of $\fol_X$ and passing through a point $p_0
\in \{y=0\}$. The section $\Sigma$ will also be identified with a neighborhood of $0 \in \C$.
Consider local coordinates
$(x_1, y_1)$ about $p_0$ so that $\fol_X$ becomes horizontal, i.e. where $X$ becomes $X = f(x_1,y_1) \partial /\partial
x_1$. In these coordinates, the vector field $Y$ must admit the form
\[
Y = y_1^{mk} [a(x_1, y_1) \partial /\partial x_1 + y_1 b(y_1) \partial /\partial y_1]
\]
where $b(0) \neq 0$. Thus the projection $Y_{\Sigma}$ of $Y$ on $\Sigma$ is given in the coordinate $y_1$ by
$Y_{\Sigma} = y_1^{k_2m + 1} b(y_1) \partial /\partial y_1$.

Now note that the vector field $Y_{\Sigma}$ is preserved by the local holonomy map $h_X$
associated to the foliation $\fol_X$ w.r.t. to the leaf $\{ y=0\}$. Indeed this is an immediate
consequence of the fact that $Y$ has a natural projection on the leaf space of $\fol_X$.
However, it follows from Lemma~\ref{Siegel1.com} that $h_X$ is conjugate to the time-one map
induced by the vector field $2 \pi i (y_1^{m\tpp +1})/(1 + \lambda y_1^{m\tpp})
\partial /\partial y_1$. It is well known that the last condition guarantees that these
two vector fields must coincide up to a multiplicative constant.
In particular, we conclude that $k_2 = \tpp$.

The desired contradiction now arises from considering the projection $Z_{\Sigma}$ of $Z$ on $\Sigma$.
By repeating the previous argument, we are led to the conclusion that $k_1 = \tpp =k_2$ which is impossible
since $k_2 > k_1$ strictly. This ends the proof of Proposition~\ref{automaticlinearization}.
\end{proof}

\bigskip

\noindent {\bf Acknowledgements}.

The first author was partially supported by FCT (Portugal) through the sabbatical grant SFRH/BSAB/135549/2018.
The second author was partially supported by CIMI - Labex Toulouse - through the grant
``Dynamics of modular vector fields, Dwork family, and applications''. 
The third author was partially supported by CMUP (UID/MAT/00144/2013), which is funded by FCT (Portugal) with national (MEC)
and European structural funds through the programs FEDER, under the partnership agreement PT2020.
She was also partially supported by FCT through the sabbatical grant SFRH/BSAB/128143/2016. Finally all the
three authors benefited from CNRS (France) support through the PICS project ``Dynamics of Complex ODEs and Geometry''.

%---------------------------------------------------------------------------------------------------------------------
%---------------------------------------------------------------------------------------------------------------------
%---------------------------------------------------------------------------------------------------------------------
%---------------------------------------------------------------------------------------------------------------------
%---------------------------------------------------------------------------------------------------------------------
%---------------------------------------------------------------------------------------------------------------------
%---------------------------------------------------------------------------------------------------------------------
%---------------------------------------------------------------------------------------------------------------------
%---------------------------------------------------------------------------------------------------------------------

\bigskip

\bigskip

\begin{flushleft}
{\sc Ana Cristina Ferreira} \\
Centro de Matem\'atica da Universidade do Minho, \\
Campus de Gualtar, \\
4710-057 Braga, Portugal\\
anaferreira@math.uminho.pt \\

\end{flushleft}

\bigskip

\begin{flushleft}
{\sc Julio Rebelo} \\
Institut de Math\'ematiques de Toulouse ; UMR 5219\\
Universit\'e de Toulouse\\
118 Route de Narbonne\\
F-31062 Toulouse, FRANCE.\\
rebelo@math.univ-toulouse.fr

\end{flushleft}

\bigskip

\begin{flushleft}
{\sc Helena Reis} \\
Centro de Matem\'atica da Universidade do Porto, \\
Faculdade de Economia da Universidade do Porto, \\
Portugal\\
hreis@fep.up.pt \\

\end{flushleft}

\end{document}